 \newtheorem{thm}{Theorem}[section]
 \newtheorem{cor}[thm]{Corollary}
 \newtheorem{lem}[thm]{Lemma}
 \newtheorem{prop}[thm]{Proposition}
 \theoremstyle{definition}
 \newtheorem{defn}[thm]{Definition}
 \theoremstyle{remark}
 \newtheorem{rem}[thm]{Remark}
 \numberwithin{equation}{section}
\newcommand{\norm}[1]{\left\Vert#1\right\Vert}
\newcommand{\scal}[1]{\left<#1\right>}
\newcommand{\Hq}{\mathbb H}
\newcommand{\Sq}{\mathbb S}
\newcommand{\R}{\mathbb{R}}      
\newcommand{\C}{\mathbb{C}}
\title[On the polyanalytic short-time Fourier transform in the quaternionic setting]{On the polyanalytic short-time Fourier transform in the quaternionic setting}
\begin{document}
\date{}
\author{Antonino De Martino, Kamal Diki}
\maketitle
\begin{abstract}
In this paper, we consider a quaternionic short-time Fourier transform (QSTFT) with normalized Hermite functions as windows. It turns out that such a transform is based on the recent theory of slice polyanalytic functions on quaternions. Indeed, we will use the notions of true and full slice polyanalytic Fock spaces and Segal-Bargmann transforms. We prove new properties of this QSTFT including a Moyal formula, a reconstruction formula and a Lieb's uncertainty principle. These results extend a recent paper of the authors which studies  a QSTFT having a Gaussian function as a window.
\end{abstract}
	
\noindent AMS Classification: 44A15, 30G35, 42C15, 46E22
	
\noindent {\em Key words}: Short-time Fourier transform, quaternions, slice hyperholomorphic functions, Bargmann transform, slice polyanalytic functions
\tableofcontents
\section{Introduction}
In this decade many integral transforms have been extended to the quaternionic and Clifford algebras, see for example \cite{D,DG,KMNQ, PSS}. One of the motivations behind the study of such integral transforms in non-commutative settings is that one can deal with $n$-dimensional signals. Indeed, as observed in \cite{CK}, in image processing it is needed a higher-dimensional counterpart of the 1-dimensional signal. Moreover, the study of hypercomplex signals can be useful in other practical fields such as optics and signal processing. The reader can find more information about applications of hypercomplex signals in \cite{CK} and the references therein.
\\ One of the most studied integral transforms is the short-time Fourier transform. It is used in several applications such as the predictions of sound source position emanated by fault machine \cite{R} and the interpretation of ultrasonic waveforms \cite{NELJQ}.
The short-time Fourier transform has been studied in quaternionic and Clifford settings in \cite{BA, D1,DMD}. In particular in \cite{DMD} we gave a definition of a quaternionic short-time Fourier transform (QSTFT) in dimension one for a Gaussian window.
\\ In order to generalize our previous work, in this paper we use as window functions the weighted Hermite functions, with a parameter $ \nu=2 \pi$,
\begin{equation}
\label{Her}
\psi_n^\nu(x):= \frac{(-1)^{n} e^{\frac{\nu}{2} x^2} \displaystyle \frac{d^n}{dx^n} \bigl( e^{-\nu x^2} \bigl)}{2^{n/2} \nu^{n/2} (n!)^{1/2} \pi^{1/4} \nu^{-1/4}}.
\end{equation}
We note that for $n=0$ we have $ \psi_0(t)=2^{1/4} e^{- \pi t^2}$, which is exactly the window function that we considered in \cite{DMD}.
\\ The study of the QSTFT with respect to the weighted Hermite functions as windows is related to the theory of slice polynanalytic functions of a quaternionic variable. Recently, this topic has been intensively investigated, see \cite{ADS1, ACDS, ACDS1, ADS}.
The idea of the paper is to fix the following property
$$ V_{\psi_n} \varphi(x, \omega)= e^{- \pi i x \omega} G^{n+1} \varphi (\bar{z}) e^{- \frac{\pi | z|^2}{2}},$$
where $V_{\psi_n}$ is the complex short-time Fourier transform with respect to the weighted Hermite functions $ \psi_n$ (see \cite[Prop.1]{A}) and $G^{n+1} \varphi$ denotes the complex true polyanalytic Segal-Bargmann transform. We extend it in the quaternionic setting. To reach this aim we need the slice vision of the quaternions (\cite{GSS}). 
\\ It is possible to introduce a short-time Fourier transform of a vector-valued function $ \vec{\varphi}=(\varphi_0,..., \varphi_n)$ 
$$ \mathbf{V}_{\vec{\psi}} \vec{\varphi}(x, \omega)= e^{- \pi i x \omega} \mathbf{G} \vec{\varphi}(\bar{z}) e^{- \frac{\pi | z|^2}{2}},$$
where $ \mathbf{V}_{\vec{\psi}} \vec{\varphi}$ denotes the complex short-time Fourier transform with respect to the vector-valued window $ \vec{\psi}=( \psi_0,...,\psi_n)$ (see \cite[Formula 20]{AF}), and $\mathbf{G} \vec{\varphi}$ is the complex polyanalytic Bargmann transform (full-poly Bargmann). Also in this case we extend the formula to the quaternions.
\\ Based on the properties of the true quaternionic polyanalytic Bargmann transform and the full-poly one (see also \cite{BEA}) we prove the main results of the QSTFT. 
\\ The plan of the paper is the following: in Section 2 we recall some preliminaries about quaternions, slice hyperholomorphic functions and slice quaternionic polyanalytic theory. In Section 3 we introduce the quaternionic polyanalytic Fock space, $\widetilde{\mathcal{F}}_{Slice}^{n+1}(\mathbb{H})$, and the true one, $\mathcal{F}_T^{n}(\mathbb{H})$. Moreover, we prove the following relation between these spaces
$$ \widetilde{\mathcal{F}}_{Slice}^{n+1}(\mathbb{H})= \bigoplus_{j=0}^{n} \mathcal{F}_T^{j}(\mathbb{H}).$$
Then, we show an unitary and an isometry property of the true quaternionic polyanalytic Bargmann transform. The last property is also proved for the quaternionic full-polyanalytic Bargmann transform.
In Section 4 we prove a closed formula for the reproducing kernel of the quaternionic true polyanalytic Fock space. Using generic properties of the reproducing kernel we prove two estimates which relate the quaternionic true polyanalytic Bargmann, respectively the quaternionic full-polyanalytic Bargmann, with the signal, respectively the vector-valued signal. In Section 5 we derive a closed formula for the complex true polyanalytic Bargmann transform. To this end, we prove the following relation
$$\left(\partial_z-2\pi \overline{z}\right)^k e^{-\pi(z^2+x^2)+2\pi\sqrt{2}z x}=(-1)^k2^{-\frac{k}{2}}e^{-\pi(z^2+x^2)+2\pi\sqrt{2}zx}H_{k}\left(\frac{z+\overline{z}}{\sqrt{2}}-x\right), $$
where $H_{k}$ are the weighted Hermite polynomials. Thanks to the Identity Principle we extend the closed formula in the quaternionic setting. In Section 6 we define the true-poly QSTFT with respect to $ \psi_n$ as
$$
\mathcal{V}_{\psi_n} \varphi(x, \omega)=e^{-I \pi x \omega} B^{n+1}(\varphi) \left( \frac{\bar{q}}{\sqrt{2}} \right) e^{- \frac{|q|^2 \pi}{2}},\quad q=x+I \omega,
$$
where $B^{n+1}$ is the true quaternionic polyanalytic Bargmann transform. Furthermore, we give the definition of the QSTFT of a vector-valued signal $\vec{\varphi}=(\varphi_0,...,\varphi_n)$ with respect to the vectorial window $\vec{\psi}=(\psi_0,..., \psi_n)$, this transform will be called full-poly QSTFT,
$$ \mathbb{V}_{\vec{\psi}}\vec{\varphi}(x, \omega)=e^{-I \pi x \omega} \mathfrak{B}(\vec{\varphi}) \left( \frac{\bar{q}}{\sqrt{2}} \right) e^{- \frac{|q|^2 \pi}{2}},$$
where $\mathfrak{B}$ is the quaternionic polyanalytic Bargmann transform. These two kinds of QSTFT are related each other by the following formula
\begin{equation}
\label{intro1}
\mathbb{V}_{\vec{\psi}}\vec{\varphi}(x, \omega)= \sum_{j=0}^n \mathcal{V}_{\psi_j} \varphi_j(x, \omega).
\end{equation}
Using the properties of the true quaternionic polyanalytic Bargmann and the formula \eqref{intro1} we prove an isometric relation and a Moyal formula both for the true-poly QSTFT and the full-poly one. In this context a reconstruction formula holds both for the true-poly QSTFT and the full-poly one. For the last one we cannot have a scalar formula but a vectorial one. Basically, we use the inversion formula of the-true poly QSTFT for each component of the vector-valued function $ \vec{\varphi}=(\varphi_0,...,\varphi_n)$. The reconstruction property is also important since allows us to define the adjoint operators for these two kinds of QSTFT. These can be considered as left side inverses. Moreover, thanks to the reconstruction formula we find a formula for the reproducing kernel of the quaternionic Gabor space associated to the true-poly QSTFT, which is defined as 
$$\mathcal{G}_{\mathbb{H}}^{\psi_n}:=\lbrace{\mathcal{V}_{\psi_n}\varphi, \text{  } \varphi\in L^2(\mathbb{R},\mathbb{H})}\rbrace.$$
We also consider a vectorial-valued version of the previous space.
Finally, using the inequalities proved in section 4 we prove a Lieb's uncertainty principle, which states that (\cite{G})
\newline
\begin{center}
\emph{"A function cannot be concentrated on small sets in the time-frequency plane, no matter which  time-frequency representation is used."}
\end{center}
Due to the lack of references we add two appendices at the end of the paper. In the apppendix A, we prove an orthogonality relation for the complex Hermite polynomials, with a general parameter $ \alpha>0$. In the appendix B we show some basic properties of the Hermite polynomials, for a general parameter $ \nu>0$.

\section{Preliminaries}
The skew field of quaternions is defined to be
$$\Hq=\lbrace{q=x_0+x_1i+x_2j+x_3k\quad ; \ x_0,x_1,x_2,x_3\in\R}\rbrace$$ where the imaginary units satisfy the multiplication rules $$i^2=j^2=k^2=-1\quad \text{and}\quad ij=-ji=k, \quad jk=-kj=i,\quad ki=-ik=j.$$
On $\Hq$ the conjugate and the modulus of $q$ are defined respectively by
$$\overline{q}=Re(q)-Im(q) \quad \text{where} \quad Re(q)=x_0, \quad Im(q)=x_1i+x_2j+x_3k$$
and $$\vert{q}\vert=\sqrt{q\overline{q}}=\sqrt{x_0^2+x_1^2+x_2^2+x_3^2}.$$ We note that the quaternionic conjugation satisfy the property $\overline{ pq }= \overline{q}\, \overline{p}$ for any $p,q\in \Hq$.
Moreover, the unit sphere $$\lbrace{q=x_1i+x_2j+x_3k;\text{ } x_1^2+x_2^2+x_3^2=1}\rbrace$$ coincides with the set of all  imaginary units given by $$\mathbb{S}=\lbrace{q\in{\Hq};q^2=-1}\rbrace.$$
Any quaternion $q\in \Hq\setminus \R$ can be written in a unique way as $q=x+I y$ for some real numbers $x$ and $y>0$, and imaginary unit $I\in \mathbb{S}$, in fact we have $$q=x_0+\dfrac{x_1i+x_2j+x_3k}{|x_1i+x_2j+x_3k|}|x_1i+x_2j+x_3k|.$$
Then, for every given $I\in{\mathbb{S}}$, the slice $\C_I$ is defined to be $\mathbb{R}+\mathbb{R}I$ and it is isomorphic to the complex plane $\C$ so that it can be considered as a complex plane in $\Hq$ passing through $0$, $1$ and $I$. It is immediate that we have $$\Hq=\bigcup_{I \in \mathbb{S}}\C_I.$$ 
Before to recall the definition of slice regular functions we provide the following definition.
\begin{defn}
A domain $\Omega\subset \Hq$ is said to be a slice domain (or just $s$-domain) if  $\Omega\cap{\mathbb{R}}$ is nonempty and for all $I\in{\mathbb{S}}$, the set $\Omega_I:=\Omega\cap{\C_I}$ is a domain of the complex plane $\C_I$.
If moreover, for every $q=x+Iy\in{\Omega}$, the whole sphere $$[q]:=\lbrace{x+Jy; \, J\in{\mathbb{S}}}\rbrace,$$
is contained in $\Omega$, we say that  $\Omega$ is an axially symmetric slice domain.
\end{defn}
\begin{defn}
Let $U\subseteq\mathbb{H}$ be an axially symmetric open set and $\mathcal{U} = \{ (x,y)\in\mathbb{R}^2\ :\ x+ I y\subset U\} \subset \mathbb{R} \times \mathbb{R}$. A function $f:U\to \mathbb{H}$ is called left slice function, if it is of the form
\begin{equation}
\label{cor1}
f(q) = \alpha(x,y) + I\beta(x,y)\qquad \text{for } q = x + I y\in U
\end{equation}
with the two functions $\alpha, \beta: \mathcal{U}\to \mathbb{H}$ that satisfy the compatibility conditions
$\alpha(x,-y) = \alpha(x,y)$, $\beta(x,-y) = -\beta(x,y)$.
\end{defn}
\begin{defn}
A slice function $f: \Omega \longrightarrow \Hq$, on a given domain $\Omega\subset \Hq$, is said to be a (left) slice regular function if, for every $I\in \Sq$, the restriction $f_I$ to the slice $\C_{I}$ satisfies
$$
\overline{\partial_I} f(x+Iy):=
\dfrac{1}{2}\left(\frac{\partial }{\partial x}+I\frac{\partial }{\partial y}\right)f_I(x+Iy)=0,
$$
on $\Omega_I$. The slice derivative $\partial_S f$ of $f$ is defined by :
\begin{equation*}
\partial_S(f)(q):=
\left\{
\begin{array}{rl}
\partial_I(f)(q)& \text{if } q=x+Iy, y\neq 0\\
\displaystyle\frac{\partial}{\partial{x}}(f)(x) & \text{if } q=x \text{ is real}.
\end{array}
\right.
\end{equation*}
\end{defn}
\begin{rem}
If $f$ is a slice regular function of the form \eqref{cor1} such that $ \alpha$ and $ \beta$ are real-valued we say that $f$ is intrinsic.
\end{rem}

The space of slice regular functions is endowed with the natural topology of uniform convergence on compact sets. The characterization of slice regular functions on a ball $ B(0,R):= \{q\in \Hq; \, |q|<R\}$ centered at the origin is given by
\begin{thm}[Series expansion]
An $\Hq$-valued function $f$ is slice regular on $B(0,R)\subset \Hq$ if and only if it has a series expansion of the form:
$$f(q)=\sum_{n=0}^{+\infty} q^n\frac{1}{n!}\partial^{(n)}_S(f)(0)$$
converging on $B(0,R)=\{q\in\Hq; \,|q|<R\}$.
\end{thm}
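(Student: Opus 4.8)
The plan is to prove the two implications separately, handling the ``if'' direction as a convergence-plus-regularity statement and the ``only if'' direction by combining the Splitting Lemma with the classical Taylor expansion on each slice. For the ``if'' direction I would first verify that every monomial $q\mapsto q^n a$ with $a\in\Hq$ is slice regular: restricting to a slice $\C_I$ and writing $z=x+Iy$, one computes $\tfrac12(\partial_x+I\partial_y)(z^n a)=0$, using that $z^{n-1}$ commutes with $I$ inside $\C_I$ so that $Iz^{n-1}I=-z^{n-1}$. Next, the hypothesis that $\sum_n q^n a_n$ converges on $B(0,R)$ yields, via the root test for the quaternionic coefficients $a_n$, absolute and uniform convergence on every compact subset of $B(0,R)$. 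Since on each slice a uniform limit of holomorphic functions is holomorphic, the sum satisfies $\overline{\partial_I}f=0$ for every $I\in\Sq$ and is a slice function, hence slice regular.

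For the ``only if'' direction, fix $I\in\Sq$ and choose $J\in\Sq$ with $J\perp I$, so that $\{1,I,J,IJ\}$ is a real basis of $\Hq$. By the Splitting Lemma the restriction $f_I$ decomposes as $f_I=F+GJ$, where $F,G\colon B(0,R)\cap\C_I\to\C_I$ are holomorphic in the complex variable $z=x+Iy$. Applying the classical Taylor expansion to $F$ and $G$ on the disk of radius $R$ gives $f_I(z)=\sum_{n=0}^{\infty}z^n a_n$ with $a_n=b_n+c_nJ\in\Hq$, converging on $B(0,R)\cap\C_I$.

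The crucial step is to identify the coefficients and to prove that they are independent of $I$. Since $f_I$ is holomorphic, the slice derivative satisfies $\partial_S f|_{\C_I}=\partial_I f_I=\tfrac{d}{dz}f_I$; because $\partial_S f$ is again slice regular, iteration yields $\partial_S^{(n)}f|_{\C_I}=\tfrac{d^n}{dz^n}f_I$, whence $a_n=\tfrac{1}{n!}\tfrac{d^n}{dz^n}f_I(0)=\tfrac{1}{n!}\partial_S^{(n)}f(0)$. Now $0$ is a real point lying in every slice, so $\partial_S^{(n)}f(0)$ is a single quaternion not depending on the choice of $I$; hence the coefficients $a_n$ are intrinsic to $f$. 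In particular $\limsup_n|a_n|^{1/n}$ is slice-independent, so convergence on one disk of radius $R$ propagates to the whole ball $B(0,R)=\bigcup_{I\in\Sq}\bigl(B(0,R)\cap\C_I\bigr)$, giving $f(q)=\sum_{n=0}^{\infty}q^n\tfrac{1}{n!}\partial_S^{(n)}f(0)$ for every $q\in B(0,R)$.

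The main obstacle is precisely the slice-independence of the Taylor coefficients: this is the rigidity phenomenon peculiar to slice regularity, and the argument hinges on the observation that the iterated slice derivatives are evaluated at the real point $0$, which is common to all slices. A secondary technical point, needed in order to iterate the slice derivative, is that $\partial_S f$ is itself slice regular; this follows because the constant-coefficient operators $\partial_I$ and $\overline{\partial_I}$ commute, so that $\overline{\partial_I}(\partial_I f_I)=\partial_I(\overline{\partial_I}f_I)=0$, together with the (routine) verification that $\partial_S f$ is a slice function.
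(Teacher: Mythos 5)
The paper does not prove this statement: it is quoted as a known background result from the theory of slice regular functions (see \cite{GSS}), so there is no in-paper proof to compare against. Your argument is correct and is essentially the standard one from that literature: monomials $q^n a$ are slice regular by the direct $\overline{\partial_I}$ computation and uniform convergence on compacta passes regularity to the sum, while the converse combines the Splitting Lemma with the classical Taylor expansion on a slice and the observation that the coefficients $\frac{1}{n!}\partial_S^{(n)}f(0)$ are evaluated at the real point $0$, hence slice-independent, so the expansion propagates from one slice to the whole ball.
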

The theory of slice regular functions has been  extensively studied in several directions, and it is nowadays widely developed \cite{ACS1,CSS1, CSS2, GSS}. The advantages of this new theory is that it contains polynomials and power series with quaternionic coefficients in the right, contrary to the Fueter theory of regular functions defined by means of the Cauchy-Riemann Fueter differential operator. The meeting point between the two function theories comes from an idea of Fueter in the thirties and next developed later by Sce \cite{CSS3, S} and by Qian \cite{Q}.
Moreover the slice theory has many applications in operator theory and in mathematical physics. The spectral theory of the S-spectrum is a natural tool for the formulation of quaternionic quantum mechanics and for the study of new classes of fractional diffusion problems, see \cite{CG, CGK},
and the references therein.
\\The  slice polyanalytic functions of a quaternionic variable (or of a paravector variable, in the case of Clifford algebra-valued functions) have to be considered as a subclass of slice functions, see \cite[Def 3.17]{ADS}.
\begin{defn}[Slice  polyanalytic functions]\label{polycorrected}
Let $n\in \mathbb{N}$ and denote by $\mathcal{C}^n(U)$ the set of
continuously differentiable functions with all their derivatives up to order $n$ on an
axially symmetric open set $U\subseteq\mathbb{H}$. A function slice $f:U\to \mathbb{H}$ is called left slice polyanalytic function of order $n\in \mathbb{N}$, if $\alpha$ and $\beta$ are in $\mathcal{C}^n(U)$ and satisfy the poly Cauchy-Riemann equations of order $n\in \mathbb{N}$
\begin{align}\label{CR}
\frac{1}{2^n}(\partial_x+I\partial_y)^n(\alpha(x,y) + I\beta(x,y))=0,\ \ \ {\rm for\ all} \ \ I\in \mathbb{S}.
\end{align}
The set of such kind of functions will be denoted by $\mathcal{SP}_{n}(\mathbb{H})$.
\end{defn}
\begin{rem}
The definition is easily adapted in the case of right slice polyanalytic functions. Moreover, we note that a slice regular function is a function as in the previous definition, when $n=1$.
\end{rem}

\begin{lem}[Splitting Lemma]\label{split1} Let $f$ be a slice polyanalytic function of order $n$ on a domain $\Omega\subseteq\Hq$. Then, for any imaginary units $I$ and $J$ with $I\perp J$ there exist $F,G:\Omega_{I}\longrightarrow{\C_I}$ polyanalytic functions of order $n$ such that for all $z=x+Iy\in\Omega_I$, we have
$$f_I(z)=F(z)+G(z)J.$$
\end{lem}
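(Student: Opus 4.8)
The plan is to exploit the real-linearity of the differential operators together with the fact that, once we fix orthogonal imaginary units $I\perp J$, the set $\{1,I,J,IJ\}$ is a real basis of $\Hq$ while $\C_I=\R+\R I$ is a commutative field isomorphic to $\C$. First I would decompose the restriction $f_I$. Since $\{1,I\}$ spans $\C_I$ and $\{J,IJ\}$ spans $\C_I J$, every value of the $\Hq$-valued function $f_I$ on $\Omega_I$ can be written uniquely as $f_I(z)=F(z)+G(z)J$ with $F,G:\Omega_I\to\C_I$; projecting onto the two complementary real subspaces $\C_I$ and $\C_I J$ is a real-linear smooth operation, so $F$ and $G$ inherit the $\mathcal{C}^n$ regularity of $\alpha$ and $\beta$.

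Next I would rewrite the poly Cauchy--Riemann condition on the slice. On $\C_I$ the variable is $z=x+Iy$, and the defining equation \eqref{CR} reads $(\partial_x+I\partial_y)^n f_I=0$ on $\Omega_I$, where $I$ multiplies on the left. The crucial observation is that $\partial_x$ and $\partial_y$ are real operators and $I$ commutes with every element of $\C_I$; hence for a $\C_I$-valued function $h$ one has $I\big((\partial_y h)J\big)=(I\partial_y h)J$, so a single application of the operator respects the splitting:
$$(\partial_x+I\partial_y)(F+GJ)=\big(\partial_x F+I\partial_y F\big)+\big(\partial_x G+I\partial_y G\big)J,$$
and both bracketed terms are again $\C_I$-valued. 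Iterating, an easy induction gives
$$(\partial_x+I\partial_y)^n(F+GJ)=\big[(\partial_x+I\partial_y)^n F\big]+\big[(\partial_x+I\partial_y)^n G\big]J.$$

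Finally I would separate the two components. The first summand lies in $\C_I$ and the second in $\C_I J$, which are complementary real subspaces of $\Hq$; therefore the vanishing of the whole expression forces $(\partial_x+I\partial_y)^n F=0$ and $(\partial_x+I\partial_y)^n G=0$ separately. Under the identification $\C_I\cong\C$ sending $I\mapsto i$, the operator $\tfrac12(\partial_x+I\partial_y)$ becomes the usual $\bar\partial$, so these two identities say exactly that $F$ and $G$ are complex polyanalytic of order $n$ on $\Omega_I$, which is the claim.

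The only point requiring genuine care is the commutation step: one must verify that left multiplication by $I$ passes through the $\C_I$-valued coefficients without mixing the $\C_I$ and $\C_I J$ parts, and that this persists under iteration of the operator. This is precisely where the commutativity of $\C_I$ and the placement of $J$ on the right are essential, and it is the main obstacle to writing the argument cleanly; everything else is bookkeeping. The case $n=1$ recovers the classical Splitting Lemma for slice regular functions.
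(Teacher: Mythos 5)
Your argument is correct: the decomposition $\Hq=\C_I\oplus\C_I J$, the observation that left multiplication by $I$ preserves each summand (so that $(\partial_x+I\partial_y)$ acts diagonally on the pair $(F,G)$), and the induction to order $n$ together give exactly the claimed splitting. The paper itself states this lemma without proof, importing it from the slice polyanalytic literature, and your proof is the standard one used there; no gaps.
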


\begin{rem}
\label{split2}
For $n=1$ in Lemma \ref{split1} we obtain the classic Splitting Lemma (see \cite[Lemma 1.3]{GSS}).
\end{rem}

\begin{prop}(Poly-decomposition)
\label{Kam1}
A function $f:\Omega \to \mathbb{H}$ defined on an axially symmetric slice domain is slice polyanalytic of order $n$ if and only if there exist $f_0,...,f_{n-1}$ some unique slice regular functions on $\Omega$ such that we have the following decomposition
$$ f(q):= \sum_{k=0}^{n-1} \bar{q}^k f_k(q); \quad \forall q \in \Omega.$$
\end{prop}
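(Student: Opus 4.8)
The plan is to prove both implications by restricting $f$ to a single slice $\C_I$, where the slice polyanalytic condition collapses to the classical scalar one, and then to transport the classical polyanalytic decomposition back to all of $\Hq$ through the extension machinery for slice regular functions. The starting remark is that, upon identifying $z=x+Iy$ with a complex variable on $\C_I$, the restricted operator $\overline{\partial_I}=\tfrac12(\partial_x+I\partial_y)$ is precisely the Cauchy--Riemann operator $\pbz$ on $\Omega_I$. Hence, by Definition \ref{polycorrected}, a slice function $f$ is slice polyanalytic of order $n$ if and only if $\pbz^{\,n} f_I=0$ on $\Omega_I$ for every $I\in\Sq$; this is the identity I will use throughout.

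The implication ($\Leftarrow$) is immediate. If $f(q)=\sum_{k=0}^{n-1}\bq^{\,k} f_k(q)$ with the $f_k$ slice regular, then on $\C_I$ we have $\bq=\bz$ and each $(f_k)_I$ is holomorphic, so $\pbz^{\,n}\bigl(\bz^{\,k}(f_k)_I\bigr)=(\pbz^{\,n}\bz^{\,k})(f_k)_I=0$ because $k\le n-1<n$; summing gives $\pbz^{\,n} f_I=0$, so $f$ is slice polyanalytic of order $n$.

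For ($\Rightarrow$), fix $I\in\Sq$ and choose $J\in\Sq$ with $J\perp I$. By the Splitting Lemma \ref{split1}, $f_I=F+GJ$ with $F,G\colon\Omega_I\to\C_I$ complex polyanalytic of order $n$. Applying the classical complex polyanalytic decomposition to $F$ and to $G$---equivalently, the short induction on $n$ that writes $\pbz f_I$ as a polyanalytic function of order $n-1$ and then integrates---produces holomorphic coefficients and yields $f_I(z)=\sum_{k=0}^{n-1}\bz^{\,k} h_k(z)$ with each $h_k\colon\Omega_I\to\Hq$ holomorphic. Since $\Omega$ is an axially symmetric slice domain, each $h_k$ extends to a unique slice regular function $f_k$ on $\Omega$ via the representation formula
\[
f_k(x+Jy)=\tfrac12\bigl[h_k(x+Iy)+h_k(x-Iy)\bigr]+\tfrac{JI}{2}\bigl[h_k(x-Iy)-h_k(x+Iy)\bigr],
\]
which restricts to $h_k$ on $\Omega_I$ and has the correct parity in $y$. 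Setting $g(q):=\sum_{k=0}^{n-1}\bq^{\,k} f_k(q)$, the implication already proved shows $g$ is slice polyanalytic of order $n$, and by construction $g=f$ on the whole slice $\C_I$.

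It remains to upgrade this to $g=f$ on all of $\Omega$, and to prove uniqueness; both rest on the same elementary fact about slice functions. Evaluating any slice function $\alpha(x,y)+J\beta(x,y)$ at $x+Iy$ and at $x-Iy$ and using $\alpha(x,-y)=\alpha(x,y)$, $\beta(x,-y)=-\beta(x,y)$ recovers $\alpha$ and $\beta$ separately, so two slice functions agreeing on one full slice share the same $\alpha$ and $\beta$ and hence coincide on $\Omega$; this gives $f=g$. For uniqueness, if $\sum_{k=0}^{n-1}\bq^{\,k} f_k\equiv 0$ then restricting to $\Omega_I$ and applying $\pbz^{\,n-1}$ annihilates every term except $k=n-1$ and forces $(f_{n-1})_I=0$, hence $f_{n-1}=0$ by the identity principle for slice regular functions; peeling off the top term successively gives $f_k=0$ for all $k$. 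The two places demanding genuine care are the classical polyanalytic decomposition---where I must check that the coefficients $h_k$ are truly holomorphic, which the inductive application of $\pbz$ guarantees verbatim for $\Hq$-valued functions---and the extension step, which I expect to be the main obstacle, since it is precisely what converts the slicewise coefficients into globally slice regular functions and is the only point where the axial symmetry of $\Omega$ is indispensable.
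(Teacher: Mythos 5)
Your argument is sound. Note that the paper itself does not prove this proposition: it is recalled verbatim from the reference \cite{ADS} as a preliminary, so there is no in-paper proof to compare against; your write-up supplies the standard argument that one would find there. All the essential steps are in place and correct: the reduction of the poly Cauchy--Riemann condition to $\overline{\partial_I}^{\,n}f_I=0$ slice by slice, the Leibniz computation for the easy implication (valid because $\bar z^k\in\C_I$ commutes with $I$, so the derivative can be pushed past the left factor), the Splitting Lemma plus Balk's classical decomposition to get $f_I(z)=\sum_k \bar z^k h_k(z)$ with $\overline{\partial_I}h_k=0$, the extension lemma via the representation formula (correctly identified as the one place where axial symmetry is indispensable), the identification $f=g$ from agreement on a full slice, and the uniqueness by applying $\overline{\partial_I}^{\,n-1}$ and peeling off the top coefficient. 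Two small points of hygiene: your parenthetical description of the classical decomposition as ``integrating'' $\partial_{\bar z}f_I$ is not the argument that works on a general (possibly non--simply connected) $\Omega_I$ --- the correct induction subtracts $\bar z^{\,n-1}a_{n-1}$ with $a_{n-1}:=\frac{1}{(n-1)!}\partial_{\bar z}^{\,n-1}F$, which is holomorphic, and this needs no topology on the domain; and in the converse direction one should say a word about why $\sum_k\bar q^{\,k}f_k$ is itself a slice function with $\mathcal C^n$ components (immediate, since $\bar q^{\,k}$ is intrinsic and the $f_k$ are slice regular, hence real analytic), as Definition \ref{polycorrected} requires this before the poly Cauchy--Riemann equations even make sense. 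Neither point is a gap.
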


\begin{prop}(Identity Principle)
\label{Kam2}
Let $f$ and $g$ be two slice polyanalytic functions of order $n$ on a slice domain $\Omega \subset \mathbb{H}$. If, for some $I \in \mathbb{S}$, $f$ and $g$ coincide on $U$ a subdomain of $\Omega_I$, then $f \equiv g$ everywhere in $\Omega$.
\end{prop}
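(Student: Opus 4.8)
The plan is to reduce the statement to the already-established Identity Principle for slice regular functions by means of the poly-decomposition of Proposition \ref{Kam1}. First, by linearity of the poly Cauchy--Riemann operator in \eqref{CR}, the difference $h := f - g$ is again slice polyanalytic of order $n$ on $\Omega$, and it vanishes on the subdomain $U \subseteq \Omega_I$. Thus it suffices to prove that a slice polyanalytic function $h$ of order $n$ which vanishes on an open subset of a single slice is identically zero on $\Omega$.

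Next I would apply Proposition \ref{Kam1} to write $h(q) = \sum_{k=0}^{n-1} \bq^{\,k} h_k(q)$ with uniquely determined slice regular functions $h_0, \dots, h_{n-1}$ on $\Omega$. Restricting to the slice $\C_I$ and setting $z = x + I y$, this gives $h_I(z) = \sum_{k=0}^{n-1} \bz^{\,k} h_k(z)$, which vanishes for every $z \in U$. The crucial step is to decouple the components $h_k$ along the slice. For this I would use the slice Cauchy--Riemann operator $\pbz = \tfrac12(\partial_x + I\partial_y)$: since each $h_k$ is slice regular, $\pbz (h_k)_I = 0$, and a short computation (keeping track of the non-commutative ordering of the $\C_I$-valued factor $\bz^{\,k}$ and the $\Hq$-valued factor $h_k$) yields the Leibniz-type identity $\pbz(\bz^{\,k} h_k) = k\,\bz^{\,k-1} h_k$ on $\Omega_I$. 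Applying $\pbz^{\,n-1}$ to $h_I = 0$ on $U$ then forces $(n-1)!\,h_{n-1} = 0$ on $U$, and a downward induction on the order of differentiation shows that every restriction $(h_k)_I$ vanishes on $U$.

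Finally, each $h_k$ is slice regular and vanishes on the subdomain $U$ of $\Omega_I$, so the classical Identity Principle for slice regular functions (see \cite{GSS}) forces $h_k \equiv 0$ on all of $\Omega$ for every $k$. Consequently $h = \sum_{k=0}^{n-1} \bq^{\,k} h_k \equiv 0$ on $\Omega$, that is $f \equiv g$, which is the desired conclusion.

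I expect the main obstacle to be the decoupling step, namely the careful justification of the Leibniz rule $\pbz(\bz^{\,k} h_k) = k\,\bz^{\,k-1} h_k$: since $h_k$ is quaternion-valued while $\pbz$ involves left multiplication by $I$, the computation works only because $\bz^{\,k}$ lies in the commutative slice $\C_I$ and $I\cdot(-I) = 1$, so that the imaginary unit passes harmlessly through the $\C_I$-factor. An alternative would be to invoke the Splitting Lemma \ref{split1}, writing $h_I = F + GJ$ with $F, G$ complex polyanalytic of order $n$ on $\Omega_I$, then use the classical complex polyanalytic identity principle on the domain $\Omega_I$ to get $h_I \equiv 0$ on the whole slice, and finally recover $h \equiv 0$ on $\Omega$ via the poly-decomposition; the route sketched above, however, has the advantage of staying entirely within the slice regular theory.
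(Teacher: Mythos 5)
Your argument is correct, but note that the paper itself offers no proof of Proposition \ref{Kam2}: it is recalled as a known result from the reference \cite{ADS}, where the proof runs through the Splitting Lemma \ref{split1} --- write $h_I=F+GJ$ with $F,G$ complex polyanalytic of order $n$ on $\Omega_I$, invoke the identity principle for complex polyanalytic functions vanishing on an open set to get $h_I\equiv 0$ on the whole slice, and then extend off the slice. That is precisely the ``alternative'' you sketch in your last paragraph. Your primary route is genuinely different and works: the reduction to $h:=f-g$ is legitimate since the poly Cauchy--Riemann operator \eqref{CR} is additive; the poly-decomposition of Proposition \ref{Kam1} applies (the definition of slice polyanalytic functions already forces $\Omega$ to be axially symmetric, so the extra hypothesis in \ref{Kam1} is automatic in context); the Leibniz-type identity $\pbz(\bz^{\,k}(h_k)_I)=k\,\bz^{\,k-1}(h_k)_I$ is exactly right, and your justification --- that $\bz^{\,k}$ lies in $\C_I$ and hence commutes with the left factor $I$ in $\tfrac12(\partial_x+I\partial_y)$, so the operator passes through to annihilate the slice regular $(h_k)_I$ --- is the one delicate point and you handle it correctly; the downward induction then hands each $h_k$ to the classical Identity Principle for slice regular functions (a subdomain of $\Omega_I$ certainly accumulates in $\Omega_I$). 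What your route buys is that it stays entirely inside the slice regular theory and never needs the complex polyanalytic identity principle as a black box; what the Splitting Lemma route buys is brevity, at the cost of importing that complex result from \cite{B}. Either way the conclusion $h\equiv 0$ on $\Omega$ is sound.
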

\begin{rem}
\label{ID}
For $n=1$ we obtain the classical Identity Principle.
\end{rem}
The Leibniz rule will be useful for our calculations in the next section. The proof is based on direct computations using the definition of slice derivative . 
\begin{prop}
Let $f: \mathbb{H} \to \mathbb{H}$ be an intrinsic function and $g: \mathbb{H} \to \mathbb{H}$ be a slice regular function. Then, we have
\begin{equation}
\label{Le}
\partial_s(fg)=f (\partial_s g)+ (\partial_s f)g.
\end{equation}
\end{prop}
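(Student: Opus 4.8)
The plan is to verify the identity one slice at a time, reducing everything to the local form of the slice derivative on a fixed slice $\C_I$ and then exploiting the one genuinely special feature of an intrinsic function: that its components commute with the imaginary unit of the slice. First I would fix $I\in\Sq$ and a point $q=x+Iy$ with $y\neq 0$, and recall that on $\C_I$ the slice derivative is realized by the operator $\partial_I=\tfrac{1}{2}\left(\tfrac{\partial}{\partial x}-I\tfrac{\partial}{\partial y}\right)$, the conjugate companion of the Cauchy--Riemann operator $\overline{\partial_I}$ appearing in the definition of slice regularity. Since $f$ is intrinsic and $g$ is slice regular, the product $fg$ is again slice regular, so $\partial_s(fg)$ is well defined and is computed slicewise by $\partial_I(f_I g_I)$, where $(fg)_I=f_I g_I$. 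I would then apply the ordinary Leibniz rule for the real partial derivatives $\partial_x$ and $\partial_y$ --- which is valid for $\Hq$-valued functions as long as the order of the factors is preserved --- to obtain
$$
\partial_I(f_I g_I)=\tfrac{1}{2}\left(\partial_x f_I\,g_I+f_I\,\partial_x g_I-I\,\partial_y f_I\,g_I-I f_I\,\partial_y g_I\right).
$$

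The main obstacle, and essentially the only place where the hypotheses are used, is the last term $-I f_I\,\partial_y g_I$: here the unit $I$ stands to the \emph{left} of $f_I$, whereas to recognize the summand $f_I(\partial_s g)$ I need $I$ on the \emph{right} of $f_I$. This is resolved precisely by intrinsicity. Writing $f_I(x+Iy)=\alpha(x,y)+I\beta(x,y)$ with $\alpha,\beta$ \emph{real}-valued, so that they commute with $I$, one computes $I f_I=I\alpha+I^2\beta=\alpha I-\beta=(\alpha+I\beta)I=f_I I$; thus $I$ commutes with $f_I$ on $\C_I$, and therefore $-I f_I\,\partial_y g_I=-f_I I\,\partial_y g_I=f_I\left(-I\,\partial_y g_I\right)$.

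With the commutation in hand I would regroup the four summands into the terms carrying $\partial f$ on the left of $g_I$ and the terms with $f_I$ on the left of $\partial g$:
$$
\partial_I(f_I g_I)=\underbrace{\tfrac{1}{2}\left(\partial_x f_I-I\,\partial_y f_I\right)}_{=\,\partial_s f}\,g_I+f_I\,\underbrace{\tfrac{1}{2}\left(\partial_x g_I-I\,\partial_y g_I\right)}_{=\,\partial_s g}=(\partial_s f)\,g+f\,(\partial_s g),
$$
which is exactly \eqref{Le} at every non-real point. Finally, for real $q=x$ the slice derivative is simply $\tfrac{\partial}{\partial x}$, and \eqref{Le} reduces to the classical product rule for $\Hq$-valued functions of one real variable, where no commutation is needed. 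Since the identity holds on each slice and both sides are slice functions, the two sides agree on all of $\Hq$. I expect the only subtle point to be the left/right placement of $I$ in the cross term, everything else being a routine application of the real Leibniz rule.
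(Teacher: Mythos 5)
Your proof is correct and is exactly the argument the paper has in mind: the paper gives no details, saying only that ``the proof is based on direct computations using the definition of slice derivative'' (citing \cite[Lemma 2.1]{EG}), and your computation carries this out, correctly isolating the one nontrivial step --- moving $I$ past $f_I$ in the cross term $-I f_I\,\partial_y g_I$, which is legitimate precisely because the intrinsic hypothesis makes the components $\alpha,\beta$ real-valued.
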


\begin{prop}
Let $f: \mathbb{H} \to \mathbb{H}$ be an intrinsic function and $g: \mathbb{H} \to \mathbb{H}$ be a slice regular function. Then, for any $k \in \mathbb{N}$, we have
\begin{equation}
\label{Gle}
\partial_s^k(fg)= \sum_{m=0}^k \binom{k}{m} (\partial_s^m f)(\partial_s^{k-m} g).
\end{equation}	
\end{prop}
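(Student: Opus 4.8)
The plan is to prove \eqref{Gle} by induction on $k$, taking the ordinary Leibniz rule \eqref{Le} as the engine. The base case $k=1$ is precisely \eqref{Le}, since $\binom{1}{0}=\binom{1}{1}=1$ gives $\partial_s(fg)=f(\partial_s g)+(\partial_s f)g$. Before running the induction, I would record two closure facts that are essential for the step: if $f$ is intrinsic then $\partial_s f$ is again intrinsic, and if $g$ is slice regular then $\partial_s g$ is again slice regular. The first follows from Definition of intrinsic functions together with the Remark after \eqref{cor1}: writing $f=\alpha+I\beta$ with $\alpha,\beta$ real-valued, the slice derivative acts componentwise in $(x,y)$ and preserves the real-valuedness of the coefficient functions, so intrinsicness is inherited. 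The second is the standard fact that the slice derivative of a slice regular function is slice regular, which one sees from $\partial_S=\partial_x$ on each slice $\C_I$ commuting with the Cauchy--Riemann operator $\overline{\partial_I}$. Consequently, for every $m$, the factor $\partial_s^m f$ remains intrinsic and $\partial_s^{k-m}g$ remains slice regular, so \eqref{Le} is legitimately applicable to each product $(\partial_s^m f)(\partial_s^{k-m}g)$.

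For the inductive step, assume \eqref{Gle} holds for some $k\in\mathbb{N}$. Applying $\partial_s$ once more and using linearity,
\begin{equation*}
\partial_s^{k+1}(fg)=\partial_s\left(\sum_{m=0}^k \binom{k}{m}(\partial_s^m f)(\partial_s^{k-m}g)\right)=\sum_{m=0}^k \binom{k}{m}\,\partial_s\!\left((\partial_s^m f)(\partial_s^{k-m}g)\right).
\end{equation*}
By the closure facts above, \eqref{Le} applies to each summand and yields
\begin{equation*}
\partial_s\!\left((\partial_s^m f)(\partial_s^{k-m}g)\right)=(\partial_s^{m+1} f)(\partial_s^{k-m}g)+(\partial_s^{m} f)(\partial_s^{k-m+1}g).
\end{equation*}
I would then substitute this back, split the sum into two pieces, and reindex the first piece by $m\mapsto m-1$ so that both sums carry the factor $(\partial_s^{m} f)(\partial_s^{k+1-m}g)$. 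Collecting the coefficients and invoking Pascal's identity $\binom{k}{m-1}+\binom{k}{m}=\binom{k+1}{m}$ (with the boundary terms $m=0$ and $m=k+1$ supplying the coefficients $\binom{k+1}{0}=\binom{k+1}{k+1}=1$) gives exactly $\sum_{m=0}^{k+1}\binom{k+1}{m}(\partial_s^m f)(\partial_s^{k+1-m}g)$, which is \eqref{Gle} at level $k+1$.

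The computation itself is entirely routine combinatorics once the framework is in place, so the only genuine point requiring care—and the step I expect to be the main obstacle—is the verification that intrinsicness and slice regularity are preserved under the slice derivative, since \eqref{Le} is stated only for an intrinsic $f$ against a slice regular $g$ and would fail to propagate through the induction otherwise. Note in particular that the order of the factors must be respected throughout: because $\Hq$ is noncommutative, $f$ sits on the left and $g$ on the right in every product, and \eqref{Le} is used only in the form that keeps the intrinsic factor on the left, so no reordering of quaternionic factors is ever performed.
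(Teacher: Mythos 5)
Your induction from the base Leibniz rule \eqref{Le}, together with the observation that the slice derivative preserves both intrinsicness and slice regularity (so that \eqref{Le} can legitimately be reapplied at each stage), is correct and is exactly the ``direct computation'' the paper alludes to; the paper itself does not write out the argument but defers to \cite[Lemma 2.2]{EG}. Your explicit attention to the closure facts and to keeping the intrinsic factor on the left in the noncommutative products is precisely the point that needs care, and your Pascal-identity bookkeeping is standard and sound.
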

The previous results are stated with more general hypothesis in \cite[Lemma 2.1]{EG} and \cite[Lemma 2.2]{EG}.
\\In \cite{ADS} the authors introduced the quaternionic polyanalytic Fock space defined for a given $I \in \mathbb{S}$ to be
$$ \widetilde{\mathcal{F}}_{I}^{n+1} (\mathbb{H}):= \{ f \in \mathcal{SP}_{n+1}(\mathbb{H}): \int_{\mathbb{C}_I}|f_I(q)|^2 e^{-2 \pi |q|^2} \, d \lambda_I(q) < \infty \},  \qquad n \geq 1 .$$
Moreover, the space is endowed with the following inner product
$$ \langle f, g \rangle_{ \widetilde{\mathcal{F}}_{I}^{n+1}(\mathbb{H})}= \int_{\mathbb{C}_I} \overline{g_I(q)} f_I(q)e^{-2 \pi |q|^2} \, d \lambda_I(q).$$
In \cite[Prop. 4.1]{ADS} and \cite[Prop. 4.2]{ADS} it is showed that the polyanalytic Fock space is a quaternionic reproducing kernel Hilbert space which does not depend on the choice of $ I \in \mathbb{S}$. Thus, from now we will denote the quaternionic polyanalytic Fock space by $ \widetilde{\mathcal{F}}_{Slice}^{n+1}(\mathbb{H})$.
\\Now, we give the definition of the quaternionic true polyanalytic Fock space.
\begin{defn}
\label{FT}
A function $ f: \mathbb{H} \to \mathbb{H}$ belongs to the quaternionic true polyanalytic Fock space $ \mathcal{F}_T^n (\mathbb{H})$ if and only if
\begin{itemize}
\item[i)] $ \displaystyle \int_{\mathbb{C}_I}|f_I(q)|^2 e^{-2 \pi |q|^2} \, d \lambda_I(q) < \infty.$
\item[ii)] There exists a slice regular function $H$ such that
$$ f(q)= (-1)^n\sqrt{\frac{1}{(2 \pi)^n n!}} e^{2 \pi |q|^2} \partial_s^n(e^{-2 \pi |q|^2} H(q)).$$
\end{itemize}
	
\end{defn}
\begin{rem}
We use the notation $\mathcal{F}_{Slice}(\mathbb{H})$ for the classical slice hyperholomorphic Fock space. Then, we observe that for $n=0$ we have
$$\widetilde{\mathcal{F}}_{Slice}^{1}(\mathbb{H})=\mathcal{F}_T^0 (\mathbb{H})=\mathcal{F}_{Slice}(\mathbb{H}).$$
The reproducing kernel of $\mathcal{F}_{Slice}(\mathbb{H})$ is given by 
\begin{equation}
\label{Fock}
K_{2 \pi}(p,q)=2 e_{*}(2 \pi q \bar{p}):= 2 \sum_{n=0}^{\infty} \frac{(2 \pi)^n q^n \bar{p}^n }{n!},
\end{equation}
see \cite{ACSS}.
\end{rem}
The quaternionic Segal-Bargmann transform can be defined from the quaternionic Hilbert space $L^2(\mathbb{R};dx) \! =L^2(\mathbb{R};\Hq)$, consisting of all the square integrable $\Hq$-valued functions with respect to
\begin{align}\label{spR1}
\scal{\varphi,\psi}_{L^2(\mathbb{R};dx)} : =  \int_{\R} \overline{\psi(x)} \varphi(x) dx,
\end{align}
onto the slice hyperholomorphic Bargmann-Fock space $\mathcal{F}_{Slice}(\Hq)$. For this, let $ \nu >0$, we consider the kernel function
\begin{equation}\label{KerneFct}
A(q;x) :=   \left(\frac{\nu}{\pi}\right)^{3/4} e^{\frac{-\nu}{2}(q^2+x^2)+\nu \sqrt{2}qx}; \quad (q,x)\in{\Hq\times{\mathbb{R}}},
\end{equation}
obtained as the slice hyperholomorphic extension of the kernel function of the classical Segal-Bargmann transform.
This is closely connected with the fact that $A(q;x)$ can be seen as the generating function of the real weighted Hermite functions
$$ h_n^\nu(x) := (-1)^n e^{\frac{\nu}{2}x^2} \frac{d^n}{dx^n}\left(e^{-\nu x^2}\right) $$
that form an orthogonal basis of $L^2(\mathbb{R};dx)$, with norm given explicitly by
\begin{equation}\label{normhn}
\norm{h_n^\nu}_{L^2(\mathbb{R};dx)}^2=  2^n\nu^n n!\left(\frac{\pi}{\nu}\right)^{1/2} .
\end{equation}

Associated to the kernel function $A(q;x)$ given by \eqref{KerneFct}, we consider the integral transform defined by
\begin{align} \label{defQSBT}
\mathcal{B} (\psi)(q)= \int_{\mathbb{R}}A(q;x)\psi(x)dx
= \left(\frac{\nu}{\pi}\right)^{\frac{3}{4}} \int_{\mathbb{R}}e^{\frac{-\nu}{2}(q^2+x^2)+\nu \sqrt{2}qx}\psi(x)dx
\end{align}
for $q\in{\Hq}$ and $\psi: \mathbb{R}\longrightarrow{\Hq}$, provided that the integral exists. We will call it the quaternionic Segal-Bargmann transform, see \cite{DG} for more details.

\section{Polyanalytic Bargmann transform}
Firstly, we show the relation between the quaternionic Fock space and the quaternionic true polyanalytic Fock space. In order to prove this we show two preliminary results.
\begin{lem}
\label{R1}
Let $k \geq 1$. Then, for all $ q \in \mathbb{H}$ we have
\begin{equation}
\partial_s^k e^{-2 \pi |q|^2}=(-2 \pi)^k \bar{q}^k e^{-2 \pi |q|^2}.
\end{equation}
\end{lem}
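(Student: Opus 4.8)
The function $e^{-2\pi|q|^2}$ is an intrinsic slice function (it depends only on $|q|^2=x^2+y^2$), but it is \emph{not} slice regular; consequently the Leibniz rules \eqref{Le}--\eqref{Gle}, which require a slice regular factor, are not available here, and one must instead compute $\partial_s$ directly on each slice. The plan is to fix an imaginary unit $I\in\mathbb{S}$ and work on the slice $\mathbb{C}_I$, writing $q=x+Iy$ with $y\neq 0$, so that $\partial_s=\partial_I=\tfrac{1}{2}(\partial_x-I\partial_y)$. On $\mathbb{C}_I$ both $\bq=x-Iy$ and the real exponential take values in the commutative plane $\mathbb{C}_I$, so $\partial_I$ obeys the ordinary product and chain rules with no ordering issues. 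Everything rests on the two elementary identities
\begin{equation*}
\partial_I(q\bq)=\tfrac{1}{2}(\partial_x-I\partial_y)(x^2+y^2)=x-Iy=\bq,\qquad \partial_I\bq=\tfrac{1}{2}(\partial_x-I\partial_y)(x-Iy)=0.
\end{equation*}

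First I would settle the base case $k=1$: by the chain rule for the real exponential together with the first identity,
\begin{equation*}
\partial_s e^{-2\pi|q|^2}=-2\pi\,\partial_I(q\bq)\,e^{-2\pi|q|^2}=-2\pi\,\bq\,e^{-2\pi|q|^2},
\end{equation*}
which is the claim for $k=1$. I would then argue by induction on $k$. Assuming $\partial_s^k e^{-2\pi|q|^2}=(-2\pi)^k\bq^k e^{-2\pi|q|^2}$, I apply $\partial_s$ once more, pull out the real constant $(-2\pi)^k$, and use the product rule together with $\partial_I\bq^k=k\bq^{k-1}\partial_I\bq=0$ and the base case:
\begin{equation*}
\partial_s^{k+1}e^{-2\pi|q|^2}=(-2\pi)^k\Big[(\partial_I\bq^k)\,e^{-2\pi|q|^2}+\bq^k\,\partial_I e^{-2\pi|q|^2}\Big]=(-2\pi)^{k+1}\bq^{k+1}e^{-2\pi|q|^2},
\end{equation*}
which closes the induction.

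Finally, since $I\in\mathbb{S}$ is arbitrary and $\mathbb{H}=\bigcup_{I\in\mathbb{S}}\mathbb{C}_I$, the identity holds at every $q=x+Iy$ with $y\neq 0$, and the real points are recovered by continuity, both sides being continuous slice functions. The one genuine point requiring care, rather than routine calculus, is precisely the non-regularity of $e^{-2\pi|q|^2}$: it is the fact that the Wirtinger-type operator $\tfrac{1}{2}(\partial_x-I\partial_y)$ annihilates $\bq$ on each slice that drives the induction, in place of the Leibniz machinery that one might be tempted to invoke. I do not expect any further obstacle, as the remaining manipulations are standard once the computation has been localized to a single slice.
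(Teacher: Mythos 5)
Your proof is correct and follows essentially the same route as the paper: induction on $k$, with the inductive step resting on the fact that $\partial_s$ annihilates $\bar{q}$ on each slice so that $\bar{q}^k$ can be pulled out of the derivative. The only cosmetic difference is the base case, which the paper obtains by differentiating the power series $e^{-2\pi q\bar{q}}=\sum_{n\ge 0}\frac{(-2\pi)^n}{n!}q^n\bar{q}^n$ term by term rather than by your direct Wirtinger-type computation on the slice; the two calculations are equivalent, and your explicit justification of the product rule via $\partial_I\bar{q}=0$ makes precise a step the paper leaves implicit.
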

\begin{proof}
We prove the formula by induction. Let us start with $k=1$, we observe that
$$ e^{-2 \pi |q|^2}=e^{-2 \pi q \bar{q}}= \sum_{n=0}^{\infty} \frac{(-2 \pi)^n}{n!} q^n \bar{q}^n.$$
Now, we evaluate the slice derivative and get
\begin{eqnarray*}
\partial_s e^{-2 \pi |q|^2} \! \! \! \!\! \! \! \! \! \! &&= \sum_{n=1}^{\infty} \frac{(-2 \pi)^n}{n!} n q^{n-1} \bar{q}^n\\
&& = \sum_{h=0}^{\infty} \frac{(-2 \pi)^{h+1}}{(h+1)!} (h+1) q^{h} \bar{q}^{h+1}\\
&&= -2 \pi \biggl( \sum_{h=0}^{\infty} \frac{(-2 \pi)^h}{h!} q^h \bar{q}^h \biggl) \bar{q}\\
&&= -2 \pi e^{-2 \pi |q|^2} \bar{q}.
\end{eqnarray*}
Let us assume that the formula holds for $k$. We have to prove that it holds for $k+1$
\begin{eqnarray*}
\partial_s^{k+1} e^{-2 \pi |q|^2} \! \! \! \!\! \! \! \! \! \! &&= \partial_s (\partial_s^k e^{-2 \pi |q|^2} )\\
&&= (-2 \pi)^k \bar{q}^k \partial_s e^{-2 \pi |q|^2}\\
&&= (-2 \pi)^{k+1} \bar{q}^{k+1} e^{-2 \pi |q|^2}.
\end{eqnarray*}
\end{proof}
\begin{rem}
\label{rem2}
We use similar arguments to justify that for any $k \geq 1$, we have
\begin{equation}
\overline{\partial_I}^k e^{-2 \pi |q|^2}=(-2 \pi)^k q^k e^{-2 \pi |q|^2}.
\end{equation}
\end{rem}
\begin{prop}
\label{R2}
Let $g$ be a slice regular function on $ \mathbb{H}$. We consider the following function 
$$ u(q)= \sum_{k=0}^{n} (-1)^k \sqrt{\frac{1}{(2 \pi)^k k!}} e^{2 \pi |q|^2} \partial_s^k(e^{-2 \pi |q|^2} g(q)).$$
Then $ u$ is a slice polyanalytic function of order $n+1$ on $\mathbb{H}$.
\end{prop}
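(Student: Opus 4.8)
The plan is to show that applying the slice derivative $\partial_s^k$ to the product $e^{-2\pi|q|^2}g(q)$ and then multiplying back by $e^{2\pi|q|^2}$ produces, for each fixed $k$, a slice polyanalytic function of order $k+1$; summing over $k$ from $0$ to $n$ then yields a function that is slice polyanalytic of order $n+1$, since the order of a sum is the maximum of the orders and the highest term has order $n+1$. By the Poly-decomposition (Proposition \ref{Kam1}), it suffices to exhibit each summand in the form $\sum_{j} \bar{q}^{j} g_j(q)$ with the $g_j$ slice regular and the top index at most $k$.

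First I would expand $\partial_s^k\big(e^{-2\pi|q|^2}g(q)\big)$ using the generalized Leibniz rule \eqref{Gle}. The function $e^{-2\pi|q|^2}$ is real-valued, hence intrinsic, so the rule applies with $f(q)=e^{-2\pi|q|^2}$ and the slice regular $g$, giving
\begin{equation*}
\partial_s^k\big(e^{-2\pi|q|^2}g(q)\big)=\sum_{m=0}^{k}\binom{k}{m}\big(\partial_s^m e^{-2\pi|q|^2}\big)\big(\partial_s^{k-m}g(q)\big).
\end{equation*}
Next I would substitute Lemma \ref{R1}, namely $\partial_s^m e^{-2\pi|q|^2}=(-2\pi)^m\bar{q}^{m}e^{-2\pi|q|^2}$ for $m\geq 1$ (and the obvious $m=0$ term). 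This turns the summand into
\begin{equation*}
e^{2\pi|q|^2}\partial_s^k\big(e^{-2\pi|q|^2}g(q)\big)=\sum_{m=0}^{k}\binom{k}{m}(-2\pi)^m\,\bar{q}^{m}\,\partial_s^{k-m}g(q),
\end{equation*}
where the factors $e^{2\pi|q|^2}$ and $e^{-2\pi|q|^2}$ cancel. Each $\partial_s^{k-m}g$ is again slice regular (the slice derivative preserves slice regularity), so this is precisely a decomposition of the form $\sum_{m=0}^{k}\bar{q}^{m}\,\tilde{g}_m(q)$ with slice regular coefficients $\tilde{g}_m(q)=\binom{k}{m}(-2\pi)^m\partial_s^{k-m}g(q)$. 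By Proposition \ref{Kam1} this summand is slice polyanalytic of order $k+1$.

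Finally I would assemble $u(q)=\sum_{k=0}^{n}(-1)^k\sqrt{\tfrac{1}{(2\pi)^k k!}}\,e^{2\pi|q|^2}\partial_s^k\big(e^{-2\pi|q|^2}g(q)\big)$ by collecting all the contributions according to the power of $\bar{q}$. Interchanging the order of summation, $u$ can be written as $\sum_{j=0}^{n}\bar{q}^{j}G_j(q)$ for suitable slice regular functions $G_j$, with the coefficient $G_n$ of $\bar{q}^{n}$ being a nonzero multiple of $g$ itself (only the $k=n$, $m=n$ term contributes to the top power), so the order is exactly $n+1$. Invoking Proposition \ref{Kam1} once more gives that $u\in\mathcal{SP}_{n+1}(\mathbb{H})$. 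The main point requiring care is the applicability of the Leibniz rule \eqref{Gle}: one must verify that $e^{-2\pi|q|^2}$ is genuinely intrinsic so that \eqref{Le} and hence \eqref{Gle} hold, and that slice regularity is preserved under $\partial_s$; both are standard, so I expect no serious obstacle beyond bookkeeping the binomial coefficients.
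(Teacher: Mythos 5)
Your proposal is correct and follows essentially the same route as the paper: expand $\partial_s^k(e^{-2\pi|q|^2}g)$ via the generalized Leibniz rule \eqref{Gle}, substitute Lemma \ref{R1} to cancel the Gaussian factors and obtain $\sum_{m=0}^{k}\bar{q}^{m}\beta_m(q)$ with slice regular $\beta_m$, and invoke the poly-decomposition of Proposition \ref{Kam1}. Your extra observation that the coefficient of $\bar{q}^{n}$ is a nonzero multiple of $g$, so the order is exactly $n+1$, is a small refinement the paper does not bother to state.
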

\begin{proof}
By the generalized Leibniz formula \eqref{Gle} we have
\begin{eqnarray}
\label{W1}
\nonumber
u(q)\! \! \! \!\! \! \! \! \! \! &&=\!\sum_{k=0}^{n}(-1)^k \sqrt{\frac{1}{(2 \pi)^k k!}} e^{2 \pi |q|^2} \partial_s^k(e^{-2 \pi |q|^2} g(q))\\ \nonumber
&&= \sum_{k=0}^{n}(-1)^k \sqrt{\frac{1}{(2 \pi)^k k!}} e^{2 \pi |q|^2} \sum_{m=0}^k \binom{k}{m} \partial_s^m e^{-2 \pi |q|^2} \partial_s^{k-m} g(q)\\ 
&&:= \sum_{k=0}^{n} c_k \underline{g}_k(q),
\end{eqnarray}
where $c_k:=(-1)^k \sqrt{\frac{1}{(2 \pi)^k k!}}$ and $\underline{g}_k(q):=e^{2 \pi |q|^2} \sum_{m=0}^k \binom{k}{m} \partial_s^m e^{-2 \pi |q|^2} \partial_s^{k-m} g(q)$.
\\ By Lemma \ref{R1} we get
\begin{eqnarray*}
\underline{g}_k(q)\! \! \! \!\! \! \! \! \! \! &&=e^{2 \pi |q|^2} \sum_{m=0}^k \binom{k}{m} (-2 \pi)^m \bar{q}^m e^{-2 \pi |q|^2} \partial_s^{k-m} g(q)\\
&&=\sum_{m=0}^k \binom{k}{m} (-2 \pi)^m \bar{q}^m  \partial_s^{k-m} g(q)\\
&&:= \sum_{m=0}^k \bar{q}^m \beta_m(q),
\end{eqnarray*}
where $\beta_m(q)= \binom{k}{m} (-2 \pi)^m \partial_s^{k-m} g(q)$.
\\Since $g$ is a slice regular function and the iteration of slice derivatives is slice regular too, we get that $\beta_m$ is slice regular. This implies by Proposition \ref{Kam1} that  $ \underline{g}_k$ is a slice polyanalytic function of order $k+1$. Finally by \eqref{W1} we get that $ u$ is a slice polyanalytic function of order $n+1$.
\end{proof}
Now, we are ready to prove the relation between the quaternionic polyanalytic Fock space and the quaternionic true polyanalytic Fock space.
\begin{thm}
\label{sum}
The quaternionic polyanalytic Fock space $ \widetilde{\mathcal{F}}_{Slice}^{n+1}(\mathbb{H})$ is the direct sum of true polynalytic Fock spaces $\mathcal{F}_T^{j}(\mathbb{H})$, $j=0,...,n$, i.e.
\label{R3}
$$ \widetilde{\mathcal{F}}_{Slice}^{n+1}(\mathbb{H})= \bigoplus_{j=0}^{n} \mathcal{F}_T^{j}(\mathbb{H}).$$
\end{thm}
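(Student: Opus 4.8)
The plan is to verify three things: that each summand $\mathcal{F}_T^j(\mathbb{H})$ sits inside $\widetilde{\mathcal{F}}_{Slice}^{n+1}(\mathbb{H})$, that the family $\{\mathcal{F}_T^j(\mathbb{H})\}_{j=0}^n$ is pairwise orthogonal (so the sum is a genuine orthogonal direct sum), and that these summands together exhaust $\widetilde{\mathcal{F}}_{Slice}^{n+1}(\mathbb{H})$. Throughout I write $T_j(H):=(-1)^j\sqrt{\tfrac{1}{(2\pi)^j j!}}\, e^{2\pi|q|^2}\partial_s^j\bigl(e^{-2\pi|q|^2}H(q)\bigr)$ for $H$ slice regular, so that $\mathcal{F}_T^j(\mathbb{H})$ is the set of $L^2$ functions of the form $T_j(H)$. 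Lemma \ref{R1} together with the generalized Leibniz rule \eqref{Gle} gives the explicit expansion $T_j(H)=(-1)^j\sqrt{\tfrac{1}{(2\pi)^j j!}}\sum_{m=0}^{j}\binom{j}{m}(-2\pi)^m\bar q^{\,m}\partial_s^{j-m}H$, whose top term is a nonzero multiple of $\bar q^{\,j}H$. This triangular shape in the powers of $\bar q$ will be the engine of the spanning step.

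\emph{First inclusion.} For each $j\le n$ the computation carried out in the proof of Proposition \ref{R2} already shows that $T_j(H)$ is slice polyanalytic of order $j+1$; since a function annihilated by the poly Cauchy--Riemann operator of order $j+1$ is a fortiori annihilated by the one of order $n+1$, it is slice polyanalytic of order $n+1$. Combined with the square-integrability required in Definition \ref{FT}, this yields $\mathcal{F}_T^j(\mathbb{H})\subseteq \widetilde{\mathcal{F}}_{Slice}^{n+1}(\mathbb{H})$, and hence $\sum_{j=0}^n\mathcal{F}_T^j(\mathbb{H})\subseteq\widetilde{\mathcal{F}}_{Slice}^{n+1}(\mathbb{H})$.

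\emph{Orthogonality.} To prove $\mathcal{F}_T^i(\mathbb{H})\perp\mathcal{F}_T^j(\mathbb{H})$ for $i\ne j$, I would fix $I\in\mathbb{S}$ and compute the inner product on the slice $\mathbb{C}_I$. Applying the Splitting Lemma \ref{split1} to the slice regular generators, the restriction of $T_j(H)$ to $\mathbb{C}_I$ splits as $F_1+F_2 J$, where $F_1,F_2$ are the classical complex true polyanalytic functions of order $j$ attached to the holomorphic components of $H$; this is because $\partial_s$ restricts to the complex slice derivative and $e^{-2\pi|q|^2}$ restricts to $e^{-2\pi|z|^2}$, which passes through the constant $J$. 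The quaternionic inner product then unfolds into complex inner products of true polyanalytic functions of distinct orders $i\ne j$, all of which vanish by the orthogonality of the complex Hermite polynomials proved in Appendix A. Consequently $\bigoplus_{j=0}^n\mathcal{F}_T^j(\mathbb{H})$ is a closed subspace of $\widetilde{\mathcal{F}}_{Slice}^{n+1}(\mathbb{H})$ and the sum is direct and orthogonal.

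\emph{Reverse inclusion and the main difficulty.} It remains to show $\widetilde{\mathcal{F}}_{Slice}^{n+1}(\mathbb{H})\subseteq\bigoplus_{j=0}^n\mathcal{F}_T^j(\mathbb{H})$, which is the delicate point. Algebraically it is painless: given $f\in\widetilde{\mathcal{F}}_{Slice}^{n+1}(\mathbb{H})$, the poly-decomposition of Proposition \ref{Kam1} writes $f(q)=\sum_{k=0}^n\bar q^{\,k}f_k(q)$ with $f_k$ slice regular, and the triangular expansion of $T_j$ above shows that the map $(H_0,\dots,H_n)\mapsto\sum_{j=0}^n T_j(H_j)$ is triangular with invertible diagonal with respect to the filtration by powers of $\bar q$, so I can solve for slice regular $H_0,\dots,H_n$ with $f=\sum_{j=0}^n T_j(H_j)$. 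The obstacle is that this is only a priori an identity of functions: I must guarantee that each summand $T_j(H_j)$ is genuinely square integrable, i.e. lies in $\mathcal{F}_T^j(\mathbb{H})$ and does not merely sum to an integrable function. I would bypass this by an orthogonal complement argument: since $\bigoplus_{j=0}^n\mathcal{F}_T^j(\mathbb{H})$ is closed, it suffices to show that any $h\in\widetilde{\mathcal{F}}_{Slice}^{n+1}(\mathbb{H})$ orthogonal to every $\mathcal{F}_T^j(\mathbb{H})$ vanishes; restricting $h$ to a slice, splitting it via Lemma \ref{split1}, and invoking the classical complex decomposition $\mathcal{F}^{n+1}(\mathbb{C})=\bigoplus_{j=0}^n\mathcal{F}_T^j(\mathbb{C})$ forces both complex components of $h_I$ to be zero, whence $h\equiv 0$ by the Identity Principle (Proposition \ref{Kam2}). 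Pairing the two inclusions then gives the asserted orthogonal direct sum, with the reduction to the complex theory being the step requiring the most care.
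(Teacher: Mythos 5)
Your outline shares the paper's central mechanism --- restrict to a slice $\C_I$ via the Splitting Lemma \ref{split1}, invoke the classical complex decomposition $\widetilde{\mathcal{F}}^{n+1}(\C_I)=\bigoplus_{j=0}^{n}\mathcal{F}_T^{j}(\C_I)$ of Balk and Vasilevski, and return to $\Hq$ via Proposition \ref{R2} and the Identity Principle --- and your ``first inclusion'' is essentially the paper's second inclusion. Where you diverge is in the hard direction. The paper does not use orthogonality there at all (that is only established afterwards, in Lemma \ref{today}); it constructs the decomposition directly: write $f_I=F+GJ$, split $F=\sum_k f_k$ and $G=\sum_k p_k$ in the complex true-poly Fock spaces, extend $\sum_k(f_k+p_kJ)$ to a slice polyanalytic function $u=\sum_k u_k$ of order $n+1$, identify $f=u$ by the Identity Principle, and then dispose of exactly the ``obstacle'' you worry about by the one-line observation that $u_{k,I}=f_k+p_kJ$, so each quaternionic summand inherits square-integrability from its complex components via \eqref{gral1} and \eqref{gral2}. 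Your triangular-system remark is correct but, as you yourself note, does not by itself give integrability of the individual summands; the slice-restriction identity is the cleanest way to get it, and it makes your orthogonal-complement detour unnecessary.

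That detour is also where the one genuine gap sits. The argument ``the orthogonal sum is closed, and anything orthogonal to it vanishes, hence it is everything'' needs each $\mathcal{F}_T^{j}(\Hq)$ to be a \emph{closed} subspace of $\widetilde{\mathcal{F}}_{Slice}^{n+1}(\Hq)$, and you assert this without proof. It cannot be imported from Theorem \ref{iso} (the isometry $B^{j+1}:L^2(\R,\Hq)\to\mathcal{F}_T^{j}(\Hq)$), because the proof of that theorem already relies on the present decomposition to define the norm on $\mathcal{F}_T^{j}(\Hq)$; and proving closedness directly from Definition \ref{FT} requires recovering the slice regular generator $H$ from an $L^2$-limit, which is not immediate. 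If you keep the orthogonal-complement strategy you should also make its last step concrete: the natural test functions are the quaternionic Hermite polynomials $H_{j,k}^{2\pi}(q,\bar q)$, which by \eqref{her2} are constant multiples of $e^{2\pi|q|^2}\partial_s^{j}\bigl(q^{k}e^{-2\pi|q|^2}\bigr)$, hence lie in $\mathcal{F}_T^{j}(\Hq)$ and are square-integrable by \eqref{her3}, and whose slice restrictions form a complete orthogonal system in $\widetilde{\mathcal{F}}^{n+1}(\C_I)$; orthogonality of $h$ to all of them then kills both splitting components of $h_I$. With the closedness point either proved or circumvented (by reverting to the paper's direct construction of the summands), your proof goes through.
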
  
\begin{proof}
We prove the equality by double inclusion.
Let $f \in \widetilde{\mathcal{F}}_{Slice}^{n+1}(\mathbb{H})$. Let $I \in \mathbb{S}$. We choose $J \in \mathbb{S}$ be such that $ I \perp J$. Since $f$, in particular, is slice polyanalytic by the Splitting Lemma (see Lemma \ref{split1}) there exist $F,G: \mathbb{C}_I \to \mathbb{C}_I$ polyanalytic functions of order $n+1$ in  $\widetilde{\mathcal{F}}^{n+1}(\mathbb{C}_I)$ such that
$$ f_I(z)=F(z)+G(z)J.$$
By \cite{B,V} we know that
$$ \widetilde{\mathcal{F}}^{n+1}(\mathbb{C}_I)= \bigoplus_{j=0}^{n} \mathcal{F}_T^{j}(\mathbb{C}_I).$$
Therefore there exist unique $f_k, p_k \in \mathcal{F}_T^{j}(\mathbb{C}_I)$ such that
$$ F(z)= \sum_{k=0}^{n} f_k(z),$$
$$ G(z)= \sum_{k=0}^{n}p_k(z).$$
By definition of the complex true polyanalytic Fock space we have that both $f_k$ and $p_k$ satisfy the following integrability conditions
\begin{equation}
\label{gral1}
\int_{\mathbb{C}} | f_k(z)|^2 e^{-2 \pi |z|^2} dA(z) < \infty,
\end{equation}
\begin{equation}
\label{gral2}
\int_{\mathbb{C}} |p_k(z)|^2 e^{-2 \pi |z|^2} dA(z) < \infty.
\end{equation}
Moreover, they can be written as
\begin{equation}
\label{gral3}
f_k(z)= (-1)^k\sqrt{\frac{1}{(2 \pi)^k k!}} e^{2 \pi |z|^2} \partial_z^k(e^{-2 \pi |z|^2} s_k(z)),
\end{equation}
\begin{equation}
\label{gral4}
p_k(z)= (-1)^k\sqrt{\frac{1}{(2 \pi)^k k!}} e^{2 \pi |z|^2} \partial_z^k(e^{-2 \pi |z|^2} h_k(z)),
\end{equation}

where $s_k$, $h_k$ are entire functions. Thus, we have 
\begin{eqnarray}
\nonumber
f_I(z) \! \! \! \! \!&=& \! \! \! \! \! \sum_{k=0}^{n}(f_k(z)+p_k(z) J)\\ \nonumber
& =& \! \! \! \! \! \sum_{k=0}^{n}(-1)^k \sqrt{\frac{1}{(2 \pi)^k k!}} e^{2 \pi |z|^2} \partial_z^k \bigl(e^{-2 \pi |z|^2} (s_k(z)+h_k(z)J)\bigl)\\
&:=& \! \! \! \! \! \sum_{k=0}^{n} (-1)^k \sqrt{\frac{1}{(2 \pi)^k k!}} e^{2 \pi |z|^2} \partial_z^k(e^{-2 \pi |z|^2} g(z)).
\end{eqnarray}
By hypothesis we know that $f$ is  a slice polyanalytic function of order $n+1$. Moreover, by Proposition \ref{R2} we know that the following function
$$ u(q)=\sum_{k=0}^{n} (-1)^k\sqrt{\frac{1}{(2 \pi)^k k!}} e^{2 \pi |q|^2} \partial_s^k(e^{-2 \pi |q|^2} g(q))$$
is slice polyanalytic of order $n+1$. Since the functions $f$ and $u$ coincide on the slice $\mathbb{C}_I$ by the Identity Principle (see Proposition \ref{Kam2}) we have that $f(q)=u(q)$.
Now, we call
$$u_k(q):=(-1)^k\sqrt{\frac{1}{(2 \pi)^k k!}} e^{2 \pi |q|^2} \partial_s^k(e^{-2 \pi |q|^2} g(q)), \qquad 0\leq k \leq n.$$
In order to finish this first part we have to prove
$$ \int_{\mathbb{C}_I}|u_{k,I}(q)|^2e^{-2 \pi |q|^2} d \lambda_I(q) < \infty.$$
Since $g_I(z)=s_k(z)+h_k(z)J$, by \eqref{gral3} and \eqref{gral4} we have
\begin{eqnarray*}
u_{k,I}(z)&=&(-1)^k\sqrt{\frac{1}{(2 \pi)^k k!}} e^{2 \pi |z|^2} \partial_z^k(e^{-2 \pi |z|^2} s_k(z))\\
&& +(-1)^k\sqrt{\frac{1}{(2 \pi)^k k!}} e^{2 \pi |z|^2} \partial_z^k(e^{-2 \pi |z|^2} h_k(z))J\\
&=& f_k(z)+p_k(z)J.
\end{eqnarray*}
Thus by \eqref{gral1} and \eqref{gral2} we get
\begin{eqnarray*}
\int_{\mathbb{C}_I}|u_{k,I}(q)|^2e^{-2 \pi |q|^2} d \lambda_I(q) &=& \int_{\mathbb{C}_I}|f_{k,I}(z)|^2e^{-2 \pi |z|^2} d A(z)\\
&& +\int_{\mathbb{C}_I}|p_{k,I}(z)|^2e^{-2 \pi |z|^2} d A(z) < \infty.
\end{eqnarray*}

Now, we move on the other inclusion. Let $ f \in  \bigoplus_{j=0}^{n} \mathcal{F}_T^{j}(\mathbb{H})$. This means that there exist unique functions $f_k \in \mathcal{F}_T^j(\mathbb{H})$, $k=0,...,n$, such that
\begin{equation}
\label{W2}
f(q)= \sum_{k=0}^{n} f_k(q).
\end{equation}
By definition of quaternionic true polyanalytic Fock space we have that
$$ f_k(q)=(-1)^k \sqrt{\frac{1}{(2 \pi)^k k!}} e^{2 \pi |q|^2} \partial_s^k \left(e^{-2 \pi |q|^2} H(q)\right),$$
where $H$ is a slice regular function. Thus, by Proposition \ref{R2} $f$ is a slice polyanalytic function of order $n+1$. Finally, we have to prove that
$$ \int_{\mathbb{C}_I} |f_I(q)|^2 e^{-2 \pi |q|^2} \, d \lambda_I(q) < \infty.$$
By equality \eqref{W2} and the triangle inequality we have that
$$ |f_I(q)| \leq \sum_{k=0}^{n}|f_{k,I}(q)|.$$
Therefore,
$$ |f_I(q)|^2 \leq \biggl( \sum_{k=0}^{n}|f_{k,I}(q)| \biggl)^{2} \leq (n+1) \sum_{k=0}^{n}|f_{k,I}(q)|^2.$$
Now we multiply by $ e^{-2 \pi |q|^2}$ and integrate.
\begin{eqnarray*}
&& \int_{\mathbb{C}_I} |f_I(q)|^2 e^{-2 \pi |q|^2} \, d \lambda_I(q)  \leq  (n+1) \int_{\mathbb{C}_I} \sum_{k=0}^{n}|f_{k,I}(q)|^2 e^{-2 \pi |q|^2} \, d \lambda_I(q)\\
&& \leq (n+1) \biggl[ \int_{\mathbb{C}_I} |f_{1,I}(q)|^2 e^{-2 \pi |q|^2} \, d \lambda_I(q)+...+ |f_{n,I}(q)|^2 e^{-2 \pi |q|^2} \, d \lambda_I(q) \biggl] < \infty
\end{eqnarray*}
The previous conclusion holds because $f_{k} \in \mathcal{F}_T^j(\mathbb{H})$, for $k=0,...,n$, by hypothesis.
\end{proof}

\begin{rem}
A similar result was proved in \cite[Thm. 3.4]{BEA} following a different method.
\end{rem}

Now, we give the definition of the true quaternionic polyanalytic Bargmann transform for $\varphi\in L^2(\mathbb{R},\mathbb{H})$  (inspired from \cite{FLKC})
\begin{equation}
\label{new1}
B^{n+1} \varphi(q):= (-1)^n\sqrt{ \frac{1}{(2 \pi)^n n!}} \sum_{j=0}^n \binom{n}{j} (-2 \pi \bar{q})^j \partial_s^{n-j} \mathcal{B}\varphi(q),
\end{equation}
where $\mathcal{B}\varphi(q)$ is the quaternionic analogue of the Segal-Bargmann transform (see \eqref{defQSBT} with $ \nu=2 \pi$). Using the Leibniz rule and Lemma \ref{R1} we get the following definition. 
\begin{defn}\label{TPB}
The true quaternionic polyanalytic Bargmann transform of order $n+1$ of a function $\varphi \in L^{2}(\mathbb{R}, \mathbb{H})$ is defined by the formula
\begin{equation}
\label{bar1}
B^{n+1} \varphi(q)=(-1)^n\sqrt{ \frac{1}{(2 \pi)^n n!}} e^{2 \pi |q|^2} \partial_s^n [e^{-2 \pi |q|^2} \mathcal{B}\varphi(q)].
\end{equation}
\end{defn}
\begin{rem}
For $n=0$ we obtain the quaternionic Segal-Bargmann transform $B^{1}\varphi(q)=\mathcal{B}\varphi(q)$.
\end{rem}
For our future purpose  the so called quaternionic Hermite polynomials will be very important
(for more details see \cite{EG, TT}).
\begin{equation}
\label{her}
H_{m,p}^{2 \pi} (q, \bar{q}):=(-1)^{m+p} e^{2 \pi |q|^2} \partial_s^m \overline{\partial_I}^p e^{-2 \pi |q|^2}, \qquad m,p \in \mathbb{N}.
\end{equation}
\begin{rem}
Using Remark \ref{rem2} it is possible to write the quaternionic Hermite polynomials in another way:
\begin{eqnarray*}
H_{m,p}^{2 \pi}(q,\bar{q}) \! \! \! \! \! \! \! \! \!&& =(-1)^{m+p} e^{2 \pi |q|^2} \partial_s^m [(-2 \pi)^p q^p e^{-2 \pi | q|^2}]\\
&&= (-1)^{m+p} e^{2 \pi |q|^2} (-2 \pi)^p  \partial_s^m (q^p e^{-2 \pi | q|^2})\\
&&= (2 \pi)^p (-1)^{m} e^{2 \pi |q|^2}   \partial_s^m (q^p e^{-2 \pi | q|^2}).
\end{eqnarray*} 
Therefore
\begin{equation}
\label{her2}
H_{m,p}^{2 \pi}(q,\bar{q})= (2 \pi)^p (-1)^{m} e^{2 \pi |q|^2}   \partial_s^m (q^p e^{-2 \pi | q|^2}).
\end{equation}
The following orthogonality relation holds for the quaternionic Hermite polynomials (for the proof see the Appendix A (Thm. \ref{new2}))
\begin{equation}
\label{her3}
\int_{\mathbb{C}_I} \overline{ H_{m,p}^{2 \pi}(q, \bar{q})}  H_{m',p'}^{2 \pi}(q, \bar{q}) e^{-2 \pi |q|^2} \, d \lambda_I(q)=\frac{m!p!(2\pi)^{p+m}}{2}\delta_{m,m'} \delta_{p,p'}.
\end{equation}
\end{rem}
\begin{thm}
\label{iso}
The true quaternionic polyanalytic Bargmann transform $B^{n+1}: L^2(\mathbb{R},\mathbb{H}) \to \mathcal{F}^n_T(\mathbb{H})$ is an isometric isomorphism.
\end{thm}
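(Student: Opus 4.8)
The plan is to factor the transform as $B^{n+1}=T_n\circ\mathcal{B}$, where $\mathcal{B}\colon L^2(\mathbb{R},\mathbb{H})\to\mathcal{F}_{Slice}(\mathbb{H})$ is the quaternionic Segal--Bargmann transform, already known to be an isometric isomorphism (see \cite{DG}), and $T_n$ is the operator
\[
T_n(H)(q):=(-1)^n\sqrt{\tfrac{1}{(2\pi)^n n!}}\,e^{2\pi|q|^2}\partial_s^n\!\left(e^{-2\pi|q|^2}H(q)\right).
\]
By Definition \ref{TPB} we indeed have $B^{n+1}\varphi=T_n(\mathcal{B}\varphi)$, and by Proposition \ref{R2} together with Definition \ref{FT} the operator $T_n$ sends a slice regular function to a function satisfying condition ii) of the true polyanalytic Fock space. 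Hence it suffices to prove that $T_n\colon\mathcal{F}_{Slice}(\mathbb{H})\to\mathcal{F}_T^n(\mathbb{H})$ is a unitary isomorphism; composing with the unitary $\mathcal{B}$ then yields the claim.

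The heart of the argument is the action of $T_n$ on the monomial basis. From the reproducing kernel \eqref{Fock} one reads off that $\{q^p\}_{p\ge0}$ is an orthogonal basis of $\mathcal{F}_{Slice}(\mathbb{H})$ with $\|q^p\|^2_{\mathcal{F}_{Slice}(\mathbb{H})}=\frac{p!}{2(2\pi)^p}$. Using the closed form \eqref{her2} of the quaternionic Hermite polynomials and the fact that $\partial_s$ commutes with right multiplication by constants, one gets $T_n(q^p)=\sqrt{\tfrac{1}{(2\pi)^n n!}}\,(2\pi)^{-p}H_{n,p}^{2\pi}(q,\bar q)$. Then the orthogonality relation \eqref{her3} shows that, for fixed first index $n$, the family $\{H_{n,p}^{2\pi}\}_{p\ge0}$ is orthogonal with $\|H_{n,p}^{2\pi}\|^2=\frac{n!p!(2\pi)^{p+n}}{2}$, so a direct computation gives $\langle T_n q^p,T_n q^{p'}\rangle=0$ for $p\ne p'$ and $\|T_n q^p\|^2_{\mathcal{F}_T^n(\mathbb{H})}=\frac{p!}{2(2\pi)^p}=\|q^p\|^2_{\mathcal{F}_{Slice}(\mathbb{H})}$. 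Thus $T_n$ carries an orthogonal basis to an orthogonal system preserving norms.

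To pass from the basis to the full isometry I would expand $H=\mathcal{B}\varphi=\sum_p q^pc_p$ and compute $\|T_nH\|^2_{\mathcal{F}_T^n(\mathbb{H})}$ by integrating $\big|\sum_p T_n(q^p)c_p\big|^2$ against $e^{-2\pi|q|^2}$ on a slice $\mathbb{C}_I$. Here the noncommutativity is the first delicate point: I would split each coefficient as $c_p=c_p'+c_p''J$ with $c_p',c_p''\in\mathbb{C}_I$ and $I\perp J$ (Lemma \ref{split1}), and use that each integral $\int_{\mathbb{C}_I}\overline{H_{n,p}^{2\pi}}\,H_{n,p'}^{2\pi}e^{-2\pi|q|^2}\,d\lambda_I(q)$ is a \emph{real} scalar by \eqref{her3}; this lets the constants be pulled through the integral and yields $\|T_nH\|^2=\sum_p\|q^p\|^2|c_p|^2=\|H\|^2_{\mathcal{F}_{Slice}(\mathbb{H})}=\|\varphi\|^2_{L^2(\mathbb{R},\mathbb{H})}$. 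The same computation, read in reverse, simultaneously proves well-definedness (finiteness of the norm, hence condition i) of Definition \ref{FT}) and injectivity.

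Finally, for surjectivity --- which I expect to be the main obstacle --- I would take an arbitrary $f\in\mathcal{F}_T^n(\mathbb{H})$. By Definition \ref{FT} it is of the form $f=T_nH$ for some slice regular $H$; expanding $H=\sum_p q^pc_p$ and applying the norm identity of the previous paragraph, the finiteness of $\|f\|_{\mathcal{F}_T^n(\mathbb{H})}$ forces $\sum_p\|q^p\|^2|c_p|^2<\infty$, i.e.\ $H\in\mathcal{F}_{Slice}(\mathbb{H})$. Since $\mathcal{B}$ is onto $\mathcal{F}_{Slice}(\mathbb{H})$, there is $\varphi\in L^2(\mathbb{R},\mathbb{H})$ with $H=\mathcal{B}\varphi$, whence $f=B^{n+1}\varphi$. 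The subtle points in this step are the term-by-term action of $\partial_s^n$ on the series for $H$ and the interchange of summation and integration, which rest on the completeness of $\{q^p\}$ in $\mathcal{F}_{Slice}(\mathbb{H})$ and on \eqref{her3} holding as a genuine orthogonality with the stated constants.
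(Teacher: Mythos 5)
Your proposal is correct, and the isometry part is essentially the paper's own computation in a different costume: factoring $B^{n+1}=T_n\circ\mathcal{B}$ and letting $T_n$ act on the monomials $q^p$ is the same thing as expanding $\varphi$ in the Hermite functions $\psi_k^{2\pi}$ and using $\mathcal{B}\psi_k^{2\pi}\propto q^k$, and both arguments then run entirely on the orthogonality relation \eqref{her3} for $H_{n,p}^{2\pi}$ with exactly the constants you computed ($\|T_nq^p\|^2=\tfrac{p!}{2(2\pi)^p}=\|q^p\|^2_{\mathcal{F}_{Slice}(\mathbb{H})}$ matches the paper's $\sum_k|\alpha_k|^2$ identity). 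Where you genuinely diverge is surjectivity. The paper fixes a slice, splits $H_I=F+GJ$ via the Splitting Lemma, shows the two scalar pieces lie in $\mathcal{F}^n_T(\mathbb{C}_I)$, imports the surjectivity of the \emph{complex} true polyanalytic Bargmann transform to produce preimages $\psi_1,\psi_2\in L^2(\mathbb{R},\mathbb{C}_I)$, and then glues with the Identity Principle. You instead argue intrinsically: $f=T_nH$ by Definition \ref{FT}, and the norm identity applied to the series $H=\sum_pq^pc_p$ forces $\sum_p\|q^p\|^2|c_p|^2=\|f\|^2<\infty$, i.e.\ $H\in\mathcal{F}_{Slice}(\mathbb{H})$, after which surjectivity of $\mathcal{B}$ finishes. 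Your route buys independence from the complex Vasilevski-type surjectivity result and stays entirely inside the quaternionic framework, but it puts real weight on the step you flag yourself: for a general slice regular $H$ (not known a priori to be in any Fock space) you must justify applying $\partial_s^n$ term by term and interchanging $\sum_p$ with $\int_{\mathbb{C}_I}$. This is fixable by the standard device of integrating first over a ball $|q|\le R$, where the angular orthogonality of $H_{n,p}^{2\pi}$ and $H_{n,p'}^{2\pi}$ for $p\neq p'$ still holds, and then letting $R\to\infty$ with Fatou; without some such argument the inference ``$\|f\|<\infty$ implies $\sum_p\|q^p\|^2|c_p|^2<\infty$'' is not yet established, so you should spell that out if you adopt this route. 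The paper's detour through the complex theory exists precisely to sidestep this convergence issue.
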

\begin{proof}
Firstly, we remark that by Theorem \ref{R3} the norm of the true quaternionic polyanalytic Fock space $ \mathcal{F}^n_T(\mathbb{H})$ is induced by the norm of the space $\widetilde{\mathcal{F}}_{Slice}^{n+1}(\mathbb{H})$. Thus we get
$$\| B^{n+1}(\varphi) \|_{\mathcal{F}^n_T(\mathbb{H})}=\| B^{n+1}(\varphi) \|_{\widetilde{\mathcal{F}}_{Slice}^{n+1} (\mathbb{H})}.$$
Therefore, we have to prove that
\begin{equation}
\label{t1}
\| B^{n+1}(\varphi) \|_{\widetilde{\mathcal{F}}_{Slice}^{n+1} (\mathbb{H})}= \| \varphi \|_{L^2(\mathbb{R},\mathbb{H})}.
\end{equation}
Let $\varphi \in L^2(\mathbb{R},\mathbb{H})$. We expand it in the following way
$$ \varphi(x)=\sum_{k=0}^{\infty} \psi_{k}^{2 \pi}(x) \alpha_k,$$
where $\psi_{k}^{2 \pi}(x)$ are the normalized weighted Hermite functions (see \eqref{Her})
and $( \alpha_k)_{k \in \mathbb{N}} \subset \mathbb{H}$. By \cite[Lemma 4.4]{DG} we have
\begin{eqnarray}
\nonumber
\mathcal{B}\varphi(q) \! \! \! \! \! \! \! \! \! &&= \sum_{k=0}^{\infty} \mathcal{B} \left(\psi_{k}^{2 \pi}(x) \right) \alpha_k\\ \nonumber
&&= \sum_{k=0}^{\infty} \frac{2^{1/4}2^{k/2}(2 \pi)^k q^k}{2^{k/2}(2 \pi)^{k/2} \sqrt{k!} 2^{-1/4}} \alpha_k\\
&&= \sqrt{2} \sum_{k=0}^{\infty} \frac{(2 \pi)^{k/2}}{\sqrt{k!}} q^k \alpha_k.
\end{eqnarray}
Now, we insert this in \eqref{bar1} and using \eqref{her2} we obtain
\begin{eqnarray*}
B^{n+1}\varphi(q) \! \! \! \! \! \! \! \! \! &&= \sqrt{2} \sum_{k=0}^{\infty}(-1)^n \sqrt{ \frac{1}{(2 \pi)^n n!}} e^{2 \pi |q|^2} \partial_s^n \left[e^{-2 \pi |q|^2} \frac{(2 \pi)^{k/2}}{\sqrt{k!}} q^k \right] \alpha_k\\
\nonumber
&&= \sqrt{2}  \sqrt{ \frac{1}{(2 \pi)^n n!}} \sum_{k=0}^{\infty} \frac{(2 \pi)^{k/2}}{\sqrt{k!}} (-1)^n e^{2 \pi |q|^2} \partial_s^n \left[e^{-2 \pi |q|^2} q^k \right] \alpha_k\\
\nonumber
&&= \sqrt{2} \sqrt{ \frac{1}{(2 \pi)^n n!}}  \sum_{k=0}^{\infty}  \frac{1}{\sqrt{k!} (2 \pi)^{k/2}}  (-1)^n (2 \pi)^k e^{2 \pi |q|^2} \partial_s^n \left[e^{-2 \pi |q|^2} q^k \right] \alpha_k\\
&&=\sqrt{2} \sqrt{ \frac{1}{(2 \pi)^n n!}}  \sum_{k=0}^{\infty} \frac{1}{\sqrt{k!} (2 \pi)^{k/2}}  H_{n,k}^{2 \pi}(q, \bar{q}) \alpha_k.\\
\nonumber
\end{eqnarray*}
Therefore we get
\begin{equation}
\label{ant3}
B^{n+1}\varphi(q)=\sqrt{2} \sqrt{ \frac{1}{(2 \pi)^n n!}}  \sum_{k=0}^{\infty} \frac{1}{\sqrt{k!} (2 \pi)^{k/2}}  H_{n,k}^{2 \pi}(q, \bar{q}) \alpha_k.
\end{equation}
Now, we evaluate the $\widetilde{\mathcal{F}}^{n+1}_{Slice}(\mathbb{H})$ norm of $B^{n+1}$.
\begin{eqnarray*}
\nonumber
\| B^{n+1} \varphi(q) \|_{\widetilde{\mathcal{F}}^{n+1}_{Slice}(\mathbb{H})}^2\! \! \! &=& \frac{2}{n! (2 \pi)^n} \int_{\mathbb{C}_I} \biggl( \sum_{k =0}^{\infty} \frac{1}{ \sqrt{k!} (2 \pi)^{k/2}} \overline{\alpha_k} \overline{ H_{n,k}^{2\pi}(q, \bar{q})}  \biggl) \cdot \\ \nonumber
&& \cdot  \biggl( \sum_{\ell=0}^{\infty} \frac{1}{ \sqrt{\ell!} (2 \pi)^{\ell /2}}   H_{n, \ell}^{2 \pi}(q, \bar{q}) \alpha_k  \biggl) e^{-2 \pi |q|^2} \, d \lambda_I(q)\\ \nonumber
&=& \frac{2}{n! (2 \pi)^n} \sum_{k, \ell=0}^{\infty} \frac{1}{\sqrt{k!} (2 \pi)^{k/2}} \frac{1}{\sqrt{\ell!} (2 \pi)^{\ell/2}} \cdot\\
&& \cdot \overline{\alpha_k} \biggl( \int_{\mathbb{C}_I} \overline{ H_{n,k}^{2 \pi}(q, \bar{q})}  H_{n, \ell}^{2 \pi}(q, \bar{q}) e^{-2 \pi |q|^2} \, d \lambda_I(q) \biggl) \alpha_k \nonumber
\end{eqnarray*}
Due to the orthogonality relation of the quaternionic Hermite polynomials \eqref{her3} we obtain
\begin{eqnarray*}
\| B^{n+1} \varphi(q) \|_{\widetilde{\mathcal{F}}^{n+1}_{Slice}(\mathbb{H})}^2\! \! \! &=&\frac{2}{n! (2 \pi)^n} \sum_{k= 0}^{\infty} \frac{1}{k! (2 \pi)^k} \overline{\alpha_k} \biggl( \int_{\mathbb{C}_I} \overline{ H_{n,k}^{2 \pi}(q, \bar{q})}  H_{n,k}^{2 \pi}(q, \bar{q})\\
&& \cdot e^{-2 \pi |q|^2} \, d \lambda_I(q) \biggl) \alpha_k\\ 
&=& \frac{2}{n! (2 \pi)^n} \sum_{k=0}^{\infty} \frac{1}{k! (2 \pi)^k} k! n! (2 \pi)^{k+n} \frac{1}{2}  |\alpha_k|^2=\sum_{k =0}^{\infty} |\alpha_k|^2.
\end{eqnarray*}
Thus we have
\begin{equation}
\label{W4}
\| B^{n+1} \varphi(q) \|_{\widetilde{\mathcal{F}}^{n+1}_{Slice}(\mathbb{H})}^2=\sum_{k =0}^{\infty} |\alpha_k|^2.
\end{equation}
On the other hand
\begin{eqnarray*}
\nonumber
\| \varphi \|_{L^2(\mathbb{R},\mathbb{H})}^2 \! \! \! \! \! \! \! \! \! &&= \int_{\mathbb{R}} \left( \sum_{k=0}^{\infty}  \overline{ \alpha_k} \overline{\psi_{k}^{2 \pi}(x)} \right) \left( \sum_{k=0}^{\infty} \psi_{k}^{2 \pi}(x) \alpha_k \right)  \, dx\\ \nonumber
&&= \sum_{k=0}^{\infty} \overline{ \alpha_k} \left(\int_{\mathbb{R}} \overline{\psi_{k}^{2 \pi}(x)} \psi_{k}^{2 \pi}(x) \, dx \right)  \alpha_k\\ 
&&= \sum_{k=0}^{\infty} | \alpha_k|^2.
\end{eqnarray*}
Therefore we get
\begin{equation}
\label{W5}
\| \varphi \|_{L^2(\mathbb{R},\mathbb{H})}^2=\sum_{k=0}^{\infty} | \alpha_k|^2.
\end{equation}
Since \eqref{W4} and \eqref{W5} are equal we obtain \eqref{t1}. Finally, we have to prove that $B^{n+1}\varphi$ is a surjective map. This means that for a function $h \in \mathcal{F}^{n}_{T}(\mathbb{H})$ we have to find a function $ \psi \in L^{2}(\mathbb{R}, \mathbb{H})$ such that
$$ B^{n+1} \psi(q)= h(q).$$
By the definition of the quaternionic true polyanalytic Fock space $\mathcal{F}^{n}_{T}(\mathbb{H})$ (see Definition \ref{FT}) we know that there exists a slice regular function $H$ such that
$$h(q)=(-1)^n\sqrt{\frac{1}{(2 \pi)^n n!}} e^{2 \pi |q|^2} \partial_s^n(e^{-2 \pi |q|^2} H(q)).$$
From the Splitting Lemma (see Remark \ref{split2}) for slice regular functions we can write $H$ on the slice $ \mathbb{C}_I$ as
$$ H_I(z)=F(z)+G(z)J, \quad z=x+Iy \in \mathbb{C}_I,$$
where $F(z)$ and $G(z)$ are holomorphic functions.
Thus
\begin{eqnarray*}
h_I(z)\! \! \! \! \!  &=& \! \! \! \! (-1)^n\sqrt{\frac{1}{(2 \pi)^n n!}} e^{2 \pi |z|^2} \partial_z^n \left(e^{-2 \pi |z|^2} (F(z)+G(z)J) \right)\\
&=& \! \! \! \!(-1)^n\sqrt{\frac{1}{(2 \pi)^n n!}} e^{2 \pi |z|^2} \partial_z^n \left(e^{-2 \pi |z|^2} F(z)\right)\\ 
&& \! \! \! \!+ (-1)^n\sqrt{\frac{1}{(2 \pi)^n n!}} e^{2 \pi |z|^2} \partial_z^n \left(e^{-2 \pi |z|^2}G(z) \right) J\\
&:=& \! \! \! \! P(z)+Q(z)J.
\end{eqnarray*}
By hypothesis $ h \in \mathcal{F}_T^n(\mathbb{H})$, this implies that
$$ \int_{\mathbb{C}_I} |h_I(z)|^2 e^{-2 \pi |z|^2} dA(z) < \infty.$$
Therefore
\begin{eqnarray*}
\int_{\mathbb{C}_I} |P(z)|^2 e^{-2 \pi |z|^2} dA(z) &\leq&  \int_{\mathbb{C}_I} |P(z)+Q(z)J|^2 e^{-2 \pi |z|^2} dA(z)=\\
&=& \int_{\mathbb{C}_I} |h_I(z)|^2 e^{-2 \pi |z|^2} dA(z)< \infty.
\end{eqnarray*}
Hence 
$$ \int_{\mathbb{C}_I} |P(z)|^2 e^{-2 \pi |z|^2} dA(z) < \infty.$$
Using a similar reasoning we get
$$ \int_{\mathbb{C}_I} |Q(z)|^2 e^{-2 \pi |z|^2} dA(z) < \infty.$$
Moreover, since the functions $F$ and $G$ are holomorphic we obtain that $P(z)$ and $Q(z)$ belong to the space $\mathcal{F}^n_T(\mathbb{C}_I)$. Now, since the complex polyanalytic Bargmann $B^{n+1}_{\mathbb{C}}$ is an isometric isomorphism from $L^2(\mathbb{R}, \mathbb{C}_I) \to \mathcal{F}_T^n(\mathbb{C}_I)$, and in particular is surjective, we can find two functions $ \psi_1(s)$ and $\psi_2(s)$, with $s \in \mathbb{R}$, which belong to the space $L^2(\mathbb{R},\mathbb{C}_I)$ such that
$$ B^{n+1}_{\mathbb{C}_I} \psi_1(z)=P(z), \qquad B^{n+1}_{\mathbb{C}_I} \psi_2(z)=Q(z).$$
Hence
\begin{eqnarray*}
h_I(z)\! \! \! \! \! \! \! \! \! &&=P(z)+Q(z)J=B^{n+1}_{\mathbb{C}_I} \psi_1(z)+ B^{n+1}_{\mathbb{C}_I} \psi_2(z) J=\\
&&=B^{n+1}_{\mathbb{C}_I}(\psi_1(s)+\psi_2(s)J):=B^{n+1}_{\mathbb{C}_I}\psi_I(z).
\end{eqnarray*}
Finally, we get thesis by the classical Identity Principle, see Remark \ref{ID}. 
\end{proof}
\begin{lem}
\label{today}
Let $n\geq 0$ and $j,m=0,...,n$ with $j\neq m$. Then, for $f\in\mathcal{F}^j_T (\mathbb{H})$ and $g\in \mathcal{F}^m_T (\mathbb{H})$, we have 
$$\scal{f,g}_{\widetilde{\mathcal{F}}^{n+1}_{Slice} (\mathbb{H})}=0.$$
\end{lem}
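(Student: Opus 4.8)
The plan is to reduce the statement to the orthogonality relation \eqref{her3} for the quaternionic Hermite polynomials, using the surjectivity part of Theorem \ref{iso} to obtain explicit Hermite expansions of $f$ and $g$. Since the Fock inner product does not depend on the choice of slice, I would fix once and for all an imaginary unit $I\in\mathbb{S}$ and carry out the computation on $\mathbb{C}_I$.

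First, by Theorem \ref{iso} (applied with $n=j$ and with $n=m$) the maps $B^{j+1}:L^2(\mathbb{R},\mathbb{H})\to\mathcal{F}^j_T(\mathbb{H})$ and $B^{m+1}:L^2(\mathbb{R},\mathbb{H})\to\mathcal{F}^m_T(\mathbb{H})$ are isometric isomorphisms, so I would choose $\varphi,\phi\in L^2(\mathbb{R},\mathbb{H})$ with $f=B^{j+1}\varphi$ and $g=B^{m+1}\phi$. Expanding $\varphi=\sum_k\psi_k^{2\pi}\alpha_k$ and $\phi=\sum_\ell\psi_\ell^{2\pi}\beta_\ell$ in the normalized weighted Hermite functions and applying formula \eqref{ant3}, I obtain
$$ f(q)=\sqrt{2}\sqrt{\frac{1}{(2\pi)^j j!}}\sum_{k=0}^\infty\frac{1}{\sqrt{k!}(2\pi)^{k/2}}H^{2\pi}_{j,k}(q,\bar q)\alpha_k, $$
$$ g(q)=\sqrt{2}\sqrt{\frac{1}{(2\pi)^m m!}}\sum_{\ell=0}^\infty\frac{1}{\sqrt{\ell!}(2\pi)^{\ell/2}}H^{2\pi}_{m,\ell}(q,\bar q)\beta_\ell. $$

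Next, I would insert these expansions into the defining integral $\scal{f,g}_{\widetilde{\mathcal{F}}^{n+1}_{Slice}(\mathbb{H})}=\int_{\mathbb{C}_I}\overline{g_I(q)}f_I(q)e^{-2\pi|q|^2}\,d\lambda_I(q)$. Conjugating $g_I$ reverses the order of the quaternionic factors but leaves the real scalar constants and the weights $\tfrac{1}{\sqrt{\ell!}(2\pi)^{\ell/2}}$ untouched, so after interchanging the integral with the double series the core quantity appearing for each pair $(k,\ell)$ is $\int_{\mathbb{C}_I}\overline{H^{2\pi}_{m,\ell}(q,\bar q)}\,H^{2\pi}_{j,k}(q,\bar q)e^{-2\pi|q|^2}\,d\lambda_I(q)$, flanked on the left by $\overline{\beta_\ell}$ and on the right by $\alpha_k$. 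Applying \eqref{her3} evaluates this integral as $\tfrac{m!\,\ell!(2\pi)^{\ell+m}}{2}\,\delta_{m,j}\,\delta_{\ell,k}$, and since $j\neq m$ by hypothesis the factor $\delta_{m,j}$ vanishes; thus each term of the double series is the scalar zero multiplied by $\overline{\beta_\ell}$ and $\alpha_k$, hence zero, and we conclude $\scal{f,g}_{\widetilde{\mathcal{F}}^{n+1}_{Slice}(\mathbb{H})}=0$.

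The only point requiring care is the interchange of summation and integration in the middle step. I would justify it by the norm convergence of the Hermite expansions, which is guaranteed by the isometry in Theorem \ref{iso} (equivalently, by applying continuity of the inner product to the finite partial sums and passing to the limit). This is the single technical step, but it is routine once the isometry is available, so I do not expect a genuine obstacle here.
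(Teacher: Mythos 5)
Your proof is correct and follows essentially the same route as the paper: both arguments reduce the claim to the orthogonality relation \eqref{her3} for the quaternionic Hermite polynomials after expanding $f$ and $g$ in those polynomials and pulling the quaternionic coefficients out on the appropriate sides. The only difference is cosmetic --- the paper obtains the Hermite expansions directly from Definition \ref{FT} (expanding the slice regular function $H$ as a power series in $q$), whereas you route through the surjectivity of $B^{j+1}$ in Theorem \ref{iso} and formula \eqref{ant3}, which produces the same series but makes the lemma rest on the surjectivity statement that the paper's proof does not need.
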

\begin{proof}
Due to Theorem \ref{sum} we compute the inner product of $f$ and $g$ as the inner product of the space $\widetilde{\mathcal{F}}_{Slice}^{n+1}(\mathbb{H})$.
We note that using the definition of the true polyanalytic Fock spaces there exist two slice regular functions $H$ and $L$ such that we have 
$$f(q)=(-1)^j\sqrt{\frac{1}{(2 \pi)^j j!}} e^{2\pi|q|^2}\partial_{s}^j(e^{-2\pi|q|^2}H(q))$$
and $$g(q)=(-1)^m \sqrt{\frac{1}{(2 \pi)^m m!}} e^{2\pi|q|^2}\partial_{s}^m(e^{-2\pi|q|^2}L(q)).$$
	
Thus, we can use the series expansion theorem for slice regular functions to write 
	
$$H(q)=\displaystyle \sum_{k=0}^{\infty}q^k\alpha_k \quad \text{ and } \quad L(q)=\displaystyle \sum_{p=0}^{\infty}q^p\beta_p, \quad \lbrace{\alpha_k}\rbrace_{k\geq 0}, \lbrace{\beta_p}\rbrace_{p\geq 0}\subset \mathbb{H}.$$
	
Therefore, using the quaternionic Hermite polynomials and developing a bit the calculations we easily get that 
$$f(q)=\sqrt{\frac{1}{(2 \pi)^j j!}}\displaystyle \sum_{k=0}^{\infty} \frac{H_{k,j}^{2 \pi}(q,\bar{q})}{(2 \pi)^k}\alpha_k$$
and $$g(q)=\sqrt{\frac{1}{(2 \pi)^m m!}} \sum_{p=0}^{\infty}\frac{H_{p,m}^{2 \pi}(q,\bar{q})}{(2 \pi)^p}\beta_p.$$
Hence, using the orthogonality of the quaternionic Hermite polynomials \eqref{her3} combined with the condition $j\neq m$ we obtain 
\[ \begin{split}
\displaystyle \scal{f,g}_{\widetilde{\mathcal{F}}^{n+1}_{Slice}(\mathbb{H})}  =& \int_{\mathbb{C}_I}\overline{g(q)}f(q)e^{-2\pi|q|^2}d\lambda_I(q) \\
=&\sqrt{\frac{1}{(2 \pi)^j j!}}\sqrt{\frac{1}{(2 \pi)^m m!}} \sum_{k,p=0}^{\infty}\frac{\overline{\beta_k}}{(2 \pi)^{k+p}} \cdot \\
& \cdot \int_{\mathbb{C}_I}\left(\overline{H_{p,m}^{2 \pi}(q,\bar{q})}H_{k,j}^{2 \pi}(q,\bar{q}) e^{-2\pi|q|^2}d\lambda_I(q)\right)\alpha_p\\
 =& 0.
\end{split}
\]
\end{proof}
\begin{rem}	
We note that $\mathcal{F}^{j}_{T}(\mathbb{H})$ and $\mathcal{F}^{m}_{T}(\mathbb{H})$ are orthogonal to each other whenever $j\neq m$. This can be seen using Lemma \ref{today} and the decomposition given by Theorem \ref{sum}  $$ \widetilde{\mathcal{F}}_{Slice}^{n+1}(\mathbb{H})= \bigoplus_{j=0}^{n} \mathcal{F}_T^{j}(\mathbb{H}).$$
\end{rem}

Now, we give the proof of the following corollary for the sake of completeness.
\begin{cor}
\label{ST0}
Let $\varphi,\phi \in  L^2(\mathbb{R}, \mathbb{H})$. Then
$$ \langle B^{n+1} (\varphi), B^{n+1} (\phi) \rangle_{\mathcal{F}^n_T (\mathbb{H})}= \langle \varphi, \phi \rangle_{L^2(\mathbb{R}, \mathbb{H})}.$$
\end{cor}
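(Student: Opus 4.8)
The plan is to avoid any polarization machinery---which is delicate over the non-commutative skew field $\mathbb{H}$---and instead repeat, for two functions at once, the coordinate computation already carried out in the proof of Theorem \ref{iso}. First I would expand both signals in the orthonormal system of normalized weighted Hermite functions, writing $\varphi(x)=\sum_{k\geq 0}\psi_k^{2\pi}(x)\alpha_k$ and $\phi(x)=\sum_{k\geq 0}\psi_k^{2\pi}(x)\gamma_k$ with $\alpha_k,\gamma_k\in\mathbb{H}$. Then, exactly as in the computation leading to \eqref{W5} and using $\int_{\mathbb{R}}\overline{\psi_\ell^{2\pi}(x)}\psi_k^{2\pi}(x)\,dx=\delta_{k,\ell}$, the right-hand side becomes
$$\langle\varphi,\phi\rangle_{L^2(\mathbb{R},\mathbb{H})}=\sum_{k=0}^{\infty}\overline{\gamma_k}\,\alpha_k,$$
where I keep the quaternionic coefficients on their correct sides throughout, since the inner product \eqref{spR1} is $\mathbb{H}$-linear in the first slot from the right and conjugate-$\mathbb{H}$-linear in the second from the left.

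Next I would apply $B^{n+1}$ termwise. By \eqref{ant3} we have
$$B^{n+1}\varphi(q)=\sqrt{2}\,\sqrt{\frac{1}{(2\pi)^n n!}}\,\sum_{k=0}^{\infty}\frac{1}{\sqrt{k!}\,(2\pi)^{k/2}}\,H_{n,k}^{2\pi}(q,\bar{q})\,\alpha_k,$$
together with the analogous expansion for $\phi$ carrying $\gamma_k$ in place of $\alpha_k$. Forming the inner product $\langle B^{n+1}\varphi,B^{n+1}\phi\rangle_{\widetilde{\mathcal{F}}^{n+1}_{Slice}(\mathbb{H})}$ and interchanging sum and integral, the orthogonality relation \eqref{her3} collapses the double sum to its diagonal and produces the factor $\tfrac{1}{2}\,k!\,n!\,(2\pi)^{k+n}$, which cancels against the prefactors---this is the identical bookkeeping that yielded \eqref{W4}. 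The outcome is
$$\langle B^{n+1}\varphi,B^{n+1}\phi\rangle_{\widetilde{\mathcal{F}}^{n+1}_{Slice}(\mathbb{H})}=\sum_{k=0}^{\infty}\overline{\gamma_k}\,\alpha_k.$$
Since by Theorem \ref{sum} the inner product of $\mathcal{F}^n_T(\mathbb{H})$ is the one induced from $\widetilde{\mathcal{F}}^{n+1}_{Slice}(\mathbb{H})$, the two displayed sums coincide and the corollary follows.

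The only genuine subtlety, and the step I would treat most carefully, is the placement of the non-commuting scalars $\overline{\gamma_k}$ and $\alpha_k$: in the integrand $\overline{g}f$ the conjugated coefficient of $\phi$ must sit on the left and the coefficient of $\varphi$ on the right of the \emph{scalar-valued} integral $\int_{\mathbb{C}_I}\overline{H_{n,k}^{2\pi}(q,\bar{q})}\,H_{n,\ell}^{2\pi}(q,\bar{q})\,e^{-2\pi|q|^2}\,d\lambda_I(q)$. The key point that makes the argument go through unchanged from Theorem \ref{iso} is that, by \eqref{her3}, this integral is a \emph{real} number, hence central in $\mathbb{H}$; it therefore commutes freely past the quaternionic coefficients, and the Kronecker delta can be applied without any reordering. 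Once the sides are tracked correctly, the remaining manipulations are purely formal.
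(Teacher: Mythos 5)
Your proposal is correct and follows essentially the same route as the paper: expand both signals in the normalized Hermite basis, invoke the expansion \eqref{ant3} of $B^{n+1}$ in terms of the quaternionic Hermite polynomials, and let the orthogonality relation \eqref{her3} collapse the double sum so that both sides equal $\sum_{k}\overline{\gamma_k}\,\alpha_k$. Your explicit remark that the diagonal integral is real, hence central in $\mathbb{H}$, makes the handling of the non-commuting coefficients slightly more careful than the paper's own write-up, but it is the same argument.
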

\begin{proof}
It is known that any  $\varphi, \phi \in L^2(\mathbb{R}, \mathbb{H})$ can be expanded as 
$$ \varphi(x)=\sum_{k=0}^{\infty} \psi_{k}^{2 \pi}(x) \alpha_k,$$
$$ \phi(x)=\sum_{k=0}^{\infty} \psi_{k}^{2 \pi}(x) \beta_k,$$
where $(\alpha_k)_{k \geq 0}$, $(\beta_k)_{k \geq 0} \subset \mathbb{H}$. Since $ \psi_{k}^{2 \pi}(x)$ are normalized Hermite functions we have
\begin{equation}
\label{ant1}
\langle \varphi, \phi \rangle_{L^2(\mathbb{R}, \mathbb{H})}= \sum_{k=0}^{\infty} \overline{\beta_k} \alpha_k.
\end{equation}
On the other hand, by \eqref{ant3} we have
\begin{equation}
\label{U1}
B^{n+1} (\varphi)(q)=\sqrt{2} \sqrt{ \frac{1}{(2 \pi)^n n!}} \sum_{k =0}^{\infty} \frac{1}{\sqrt{k!} (2 \pi)^{k/2}}  H_{k,n}^{2 \pi}(q, \bar{q}) \alpha_k,
\end{equation}
\begin{equation}
\label{U2}
B^{n+1} (\phi)(q)=\sqrt{2} \sqrt{ \frac{1}{(2 \pi)^n n!}}  \sum_{\ell=0}^{\infty} \frac{1}{\sqrt{\ell!} (2 \pi)^{\ell/2}}  H_{\ell,n}^{2 \pi}(q, \bar{q}) \beta_k.
\end{equation}
Now, we put together \eqref{U1} and \eqref{U2} and by the orthogonality relation of the quaternionic Hermite polynomials \eqref{her3} we get
\begin{eqnarray*}
\nonumber
\langle B^{n+1} (\varphi), B^{n+1} (\phi) \rangle_{\mathcal{F}_T^n(\mathbb{H})} &=&  \! \! \! \! \! \int_{\mathbb{C}_I}\overline{B^{n+1}(\phi)(q)}B^{n+1}(\varphi)(q)e^{-2 \pi |q|^2} \, d \lambda_I(q)\\ \nonumber
&=&  \! \! \! \! \!\frac{2}{n! (2 \pi)^n} \sum_{k =0}^{\infty} \frac{1}{k! (2 \pi)^k} \overline{ \beta_k}
\biggl( \int_{\mathbb{C}_I} \overline{ H_{k,n}^{2 \pi}(q, \bar{q})}  H_{k,n}^{2 \pi}(q, \bar{q}) \\
&&  \! \! \! \! \! \cdot e^{-2 \pi |q|^2} \, d \lambda_I(q) \biggl) \alpha_k\\ 
&=&  \! \! \! \! \! \sum_{k=0}^{\infty} \overline{\beta_k} \alpha_k.
\end{eqnarray*}
Therefore
\begin{equation}
\label{ant2}
\langle B^{n+1} (\varphi), B^{n+1} (\phi) \rangle_{\mathcal{F}_T^n(\mathbb{H})}=\sum_{k=0}^{\infty} \overline{\beta_k} \alpha_k.
\end{equation}
Since \eqref{ant1} and \eqref{ant2} are equal we obtain the thesis.
\end{proof}
This notation will be very useful in the sequel. A vectorial-valued function $\vec{\varphi}=(\varphi_0,..., \varphi_n)$ is in the space 
$L^2(\mathbb{R}, \mathbb{H}^{n+1})$ if
\begin{equation}
\label{nnorm}
||\vec{\varphi}||^{2}_{L^2(\mathbb{R}, \mathbb{H}^{n+1})}:= \sum_{j=0}^{n}||\varphi_j||^{2}_{L^2(\R,\Hq)} < \infty.
\end{equation}
Moreover, it is also possible to consider an inner product for vector-valued functions $ \vec{f}=(f_0,...,f_n)$ and  $ \vec{g}=(g_0,...,g_n)$ as 
\begin{equation}
\label{inner}
\langle \vec{f}, \vec{g} \rangle_{L^{2}(\mathbb{R}^2, \mathbb{H}^{n+1})}= \sum_{j=0}^n \langle f_j, g_j \rangle_{L^{2}(\mathbb{R}^2, \mathbb{H})}.
\end{equation}
See (\cite{A2}) for more details.
\\Now, we define the quaternionic full-polyanalytic Bargmann transform.
\begin{defn}
\label{vecba}
Let $\vec{\varphi}=(\varphi_0,...,\varphi_{n})$ be a vector-valued function in $L^{2}(\mathbb{R}, \mathbb{H}^{n+1})$. The quaternionic full-polyanalytic Bargmann transform is defined as
\begin{equation}
\label{bar3}
\mathfrak{B} \vec{\varphi}(q)= \sum_{j=0}^{n} B^{j+1} \varphi_j(q),
\end{equation} 
where $B^{j+1} \varphi_j(q)$ is the true quaternionic polyanalytic Bargmann transform, defined in \eqref{bar1}. 
\end{defn}
\begin{rem}
For $n=0$ in \eqref{bar3} we obtain the quaternionic Segal-Bargmann transform.
\end{rem}
\begin{thm}
\label{cor2}
The quaternionic full-polyanalytic Bargmann transform 
$\mathfrak{B}:L^2(\mathbb{R},\Hq^{n+1}) \to \widetilde{\mathcal{F}}^{n+1}_{Slice}(\Hq)$ is an isometric isomorphism.
\end{thm}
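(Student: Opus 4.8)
The plan is to deduce everything from three facts already in hand: the decomposition $\widetilde{\mathcal{F}}_{Slice}^{n+1}(\mathbb{H})= \bigoplus_{j=0}^{n} \mathcal{F}_T^{j}(\mathbb{H})$ from Theorem \ref{sum}, the fact from Theorem \ref{iso} that each true-poly Bargmann transform $B^{j+1}\colon L^2(\mathbb{R},\mathbb{H})\to\mathcal{F}^j_T(\mathbb{H})$ is an isometric isomorphism, and the mutual orthogonality of the summands $\mathcal{F}^j_T(\mathbb{H})$ inside $\widetilde{\mathcal{F}}_{Slice}^{n+1}(\mathbb{H})$ provided by Lemma \ref{today}. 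Since $\mathfrak{B}$ is by \eqref{bar3} just the sum of the $B^{j+1}\varphi_j$, each landing in a different orthogonal summand, the statement should reduce to a componentwise application of Theorem \ref{iso}.

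First I would prove the isometry. For $\vec{\varphi}=(\varphi_0,\dots,\varphi_n)\in L^2(\mathbb{R},\mathbb{H}^{n+1})$ write $\mathfrak{B}\vec{\varphi}=\sum_{j=0}^n B^{j+1}\varphi_j$ with $B^{j+1}\varphi_j\in\mathcal{F}^j_T(\mathbb{H})$. Expanding the squared norm and invoking Lemma \ref{today}, all cross terms $\langle B^{j+1}\varphi_j, B^{m+1}\varphi_m\rangle$ with $j\neq m$ vanish, so that
$$\|\mathfrak{B}\vec{\varphi}\|^2_{\widetilde{\mathcal{F}}^{n+1}_{Slice}(\mathbb{H})}=\sum_{j=0}^n\|B^{j+1}\varphi_j\|^2_{\mathcal{F}^j_T(\mathbb{H})}=\sum_{j=0}^n\|\varphi_j\|^2_{L^2(\mathbb{R},\mathbb{H})}=\|\vec{\varphi}\|^2_{L^2(\mathbb{R},\mathbb{H}^{n+1})},$$
where the middle equality is the isometry of each $B^{j+1}$ (Theorem \ref{iso}) and the last is the definition \eqref{nnorm}. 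In particular $\mathfrak{B}$ is injective. If one prefers the polarized identity $\langle\mathfrak{B}\vec{f},\mathfrak{B}\vec{g}\rangle=\langle\vec{f},\vec{g}\rangle$, the same argument together with Corollary \ref{ST0} applied in each component gives it directly, using the vector-valued inner product \eqref{inner}.

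Next I would establish surjectivity. Given $F\in\widetilde{\mathcal{F}}^{n+1}_{Slice}(\mathbb{H})$, Theorem \ref{sum} yields a unique decomposition $F=\sum_{j=0}^n F_j$ with $F_j\in\mathcal{F}^j_T(\mathbb{H})$. Because each $B^{j+1}$ is surjective onto $\mathcal{F}^j_T(\mathbb{H})$, I can choose $\varphi_j\in L^2(\mathbb{R},\mathbb{H})$ with $B^{j+1}\varphi_j=F_j$; the vector $\vec{\varphi}=(\varphi_0,\dots,\varphi_n)$ then lies in $L^2(\mathbb{R},\mathbb{H}^{n+1})$ by \eqref{nnorm}, and $\mathfrak{B}\vec{\varphi}=\sum_{j=0}^n F_j=F$. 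Together with the isometry this shows that $\mathfrak{B}$ is an isometric isomorphism.

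There is no serious analytic obstacle here: the whole content has been front-loaded into Theorem \ref{sum}, Theorem \ref{iso} and Lemma \ref{today}. The only point requiring care is that the norm of $\widetilde{\mathcal{F}}^{n+1}_{Slice}(\mathbb{H})$ genuinely splits as an orthogonal sum over the components $\mathcal{F}^j_T(\mathbb{H})$ — that is, that the direct sum in Theorem \ref{sum} is orthogonal — which is exactly what Lemma \ref{today} guarantees and is what makes the cross terms disappear. Once that orthogonality is used, both the isometry and the bijectivity are immediate.
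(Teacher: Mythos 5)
Your proposal is correct and follows essentially the same route as the paper: expand the squared norm, use Lemma \ref{today} to eliminate the cross terms, apply Theorem \ref{iso} componentwise for the isometry, and obtain surjectivity from the surjectivity of each $B^{j+1}$. Your surjectivity step is in fact spelled out a bit more carefully than the paper's one-line remark, since you explicitly invoke the decomposition of Theorem \ref{sum} to produce the components $F_j$ before choosing preimages, but the underlying argument is identical.
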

\begin{proof}
Let $\vec{\varphi}=(\varphi_0,...,\varphi_{n})$ be a function in $L^2(\R,\Hq^{n+1})$ such that each component belongs to $L^2(\R,\Hq)$. Then, we have 
\begin{eqnarray*}
||\mathfrak{B}\vec{\varphi}(q)||^{2}_{\widetilde{\mathcal{F}}^{n+1}_{Slice}(\Hq)} \! \! \! \! \! \! \! \! \! \! &&= \int_{\mathbb{C}_I} \overline{\mathfrak{B}\vec{\varphi}(q)} \mathfrak{B}\vec{\varphi}(q) e^{-2 \pi |q|^2} \, d \lambda_I(q)=\\ 
&&=\sum_{j,m=0}^{n}\int_{\C_I}\overline{B^{j+1}(\varphi_j)(q)}B^{m+1}(\varphi_m)(q)e^{-2\pi|q|^2}d \lambda_I(q).
\end{eqnarray*}
From Lemma \ref{today} everything is zero when $j \neq m$, so we focus only on the case $j =m$. Then, by Theorem \ref{iso} we obtain 
$$\displaystyle ||\mathfrak{B}\vec{\varphi}(q)||^{2}_{\widetilde{\mathcal{F}}^{n+1}_{Slice}(\Hq)}=\sum_{j=0}^{n} \| B^{j+1} \varphi_j \|^2_{\mathcal{F}_T^n(\mathbb{H})}= \sum_{j=0}^{n}||\varphi_j||^{2}_{L^2(\R, \mathbb{H})}=||\vec{\varphi}||^{2}_{L^2(\mathbb{R}, \mathbb{H}^{n+1})}.$$	
Finally, the quaternionic full-polyanalytic Bargmann transform $\mathfrak{B}:L^2(\mathbb{R},\Hq^{n+1})\longrightarrow \widetilde{\mathcal{F}}^{n+1}_{Slice}(\Hq)$ is surjective because is the sum of true quaternionic polyanalytic Bargmann transforms, which are surjective (see Theorem \ref{iso}).
\end{proof}

\section{Reproducing kernel of the true polyanalytic Fock space}
We use the $*$-product of slice functions to express the reproducing kernel of the quaternionic true polyanalytic Fock space as follows.
We first recall this product. Given two slice functions of the form 
$f(q)=\alpha(x,y)+I\beta(x,y)$ and $g(q)=\alpha'(x,y)+I\beta'(x,y)$ with $q=x+Iy\in\mathbb{H}$ the slice product is defined by
$$(f*g)(q)=\left(\alpha\alpha'-\beta\beta'\right) +I \left(\alpha\beta'+\beta\alpha' \right), \qquad q=x+Iy\in \mathbb{H}.$$
\begin{rem}\label{prod}
If $f\in\mathcal{SP}_n(\mathbb{H})$ and $g\in\mathcal{SP}_m(\mathbb{H})$, then we have $$f*g\in \mathcal{SP}_{n+m-1}(\mathbb{H}).$$
\end{rem}
\begin{prop}\label{KernT}
The reproducing kernel of the true poly Fock space $\mathcal{F}^j_T(\mathbb{H})$, $j=1,...,n$ is given by 
$$ \! \! K_{n+1}(q,r)=2e_*(2\pi q \overline{r})*\left(\sum_{k=0}^{n} (-1)^k {n \choose{n-k}}\frac{1}{k!}(2\pi(\bar q q-q \bar r -\bar q r + \bar r r))^{k*} \right).$$
\end{prop}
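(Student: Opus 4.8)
The plan is to exhibit an orthonormal basis of $\mathcal{F}_T^n(\mathbb{H})$, write the kernel as the associated bilinear series, collapse that series on a single slice $\mathbb{C}_I$ where it becomes a classical complex computation, and then recover the global $*$-product formula by the Identity Principle. Throughout, the kernel meant is that of the space $\mathcal{F}_T^n(\mathbb{H})$ whose functions are slice polyanalytic of order $n+1$.

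First I would produce the basis. By Theorem \ref{iso} the transform $B^{n+1}$ is an isometric isomorphism of $L^2(\mathbb{R},\mathbb{H})$ onto $\mathcal{F}_T^n(\mathbb{H})$, so the images of the normalized Hermite functions $\psi_k^{2\pi}$ form an orthonormal basis. By the explicit expansion \eqref{ant3} these images are scalar multiples of the quaternionic Hermite polynomials $H_{n,k}^{2\pi}(q,\bar q)$; setting
$$e_k(q):=\sqrt{2}\,\sqrt{\frac{1}{(2\pi)^n n!}}\,\frac{1}{\sqrt{k!}\,(2\pi)^{k/2}}\,H_{n,k}^{2\pi}(q,\bar q),\qquad k\geq 0,$$
the orthogonality relation \eqref{her3} shows immediately that $\{e_k\}_{k\geq 0}$ is orthonormal, while surjectivity of $B^{n+1}$ gives completeness. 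Hence the reproducing kernel has the series representation $K_{n+1}(q,r)=\sum_{k\geq 0} e_k(q)\,\overline{e_k(r)}$, whose $q$-dependence is slice polyanalytic of order $n+1$ since each $e_k$ is.

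Next I would sum the series on a slice. Fixing $I\in\mathbb{S}$ and taking $q=z,\,r=w\in\mathbb{C}_I$, the $H_{n,k}^{2\pi}$ restrict to the classical complex Hermite polynomials, so
$$K_{n+1}(z,w)=\frac{2}{(2\pi)^n n!}\sum_{k=0}^{\infty}\frac{H_{n,k}^{2\pi}(z,\bar z)\,\overline{H_{n,k}^{2\pi}(w,\bar w)}}{k!\,(2\pi)^k}.$$
This is a purely complex sum, and the bilinear (Kibble--Slepian/Hardy--Hille type) generating function for the complex Hermite polynomials collapses it to $2\,e^{2\pi z\bar w}L_n\!\left(2\pi|z-w|^2\right)$, where $L_n(x)=\sum_{k=0}^n(-1)^k\binom{n}{k}\frac{x^k}{k!}$; equivalently one may import the complex true poly Fock kernel from \cite{B,V}. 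Since $\binom{n}{n-k}=\binom{n}{k}$ and $|z-w|^2=\bar z z-z\bar w-\bar z w+\bar w w$ on $\mathbb{C}_I$, this is exactly the restriction to $\mathbb{C}_I$ of the claimed right-hand side. Finally I would globalize: the function $\Phi(q):=2\pi(\bar q q-q\bar r-\bar q r+\bar r r)$ is slice in $q$ and $2e_*(2\pi q\bar r)$ is the slice regular Fock kernel \eqref{Fock}, so by Remark \ref{prod} the $*$-product
$$2e_*(2\pi q\bar r)*\Big(\sum_{k=0}^n(-1)^k\binom{n}{n-k}\frac{1}{k!}\,\Phi(q)^{k*}\Big)$$
is slice polyanalytic of order $n+1$ in $q$ and lies in $\mathcal{F}_T^n(\mathbb{H})$; restricting it to $\mathbb{C}_I$ (with $r\in\mathbb{C}_I$), the $*$-powers collapse to ordinary powers of $|z-w|^2$ and $e_*$ to the ordinary exponential, so it agrees there with $K_{n+1}$. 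Two slice polyanalytic functions of order $n+1$ agreeing on $\mathbb{C}_I$ coincide by the Identity Principle (Proposition \ref{Kam2}), which together with uniqueness of the reproducing kernel yields the formula.

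The main obstacle is the globalization step: one must verify carefully that the $*$-powers of the non-regular base function $\Phi$ restrict on the slice to ordinary powers of $|z-w|^2$, and that the whole $*$-product genuinely has slice polyanalytic order exactly $n+1$, so that it belongs to $\mathcal{F}_T^n(\mathbb{H})$ and the Identity Principle is applicable. The other technical point is the underlying complex summation (the Hermite bilinear generating function producing $L_n$), which can be handled directly or cited from \cite{B,V}.
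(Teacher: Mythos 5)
Your proposal is correct and follows essentially the same route as the paper: both arguments observe that the claimed $*$-product expression is slice polyanalytic of order $n+1$ in $q$, that it restricts on the slice containing $r$ to the known complex true-polyanalytic Fock kernel $2e^{2\pi z\bar{w}}L_n(2\pi|z-w|^2)$, and then invoke the Identity Principle for slice polyanalytic functions to conclude. The only difference is that you additionally derive the slice restriction of the kernel from the orthonormal basis $\lbrace B^{n+1}\psi_k^{2\pi}\rbrace$ and the bilinear generating function for the complex Hermite polynomials, whereas the paper simply asserts that $K_{n+1}(\cdot,r)$ extends the complex kernel on $\mathbb{C}_J$, implicitly citing the complex literature.
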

\begin{proof}
Fix $r\in\Hq$ such that $r$ belongs to some slice $\C_J$, we consider the function defined by $$\displaystyle F_n^r(q)=2 e_*(2 \pi q\overline{r})*\varphi_n(q,r) $$
where for all $q\in\mathbb{H}$ we have
$$\varphi_n(q,r):=2e_*(2\pi q \overline{r})*\left(\sum_{k=0}^{n} (-1)^k {n \choose{n-k}}\frac{1}{k!}(2\pi(\bar q q-q \bar r -\bar q r + \bar r r))^{k*} \right).$$
Clearly $q\longmapsto 2e_*(2\pi q \overline{r})$ is slice regular on $\Hq$ with respect to the variable $q$. Moreover,  $\varphi_n(q,r)$ is a slice polyanalytic function of order $n+1$ on $\Hq$ with respect to $q$, by Remark \eqref{prod}. Thus, the function $F_n^r$ is a slice polyanalytic function of order $n+1$ on $\Hq$ with respect to the variable $q$. Furthermore, note that the kernel function $K_n$ extends the complex kernel on the slice $\C_J$. In particular, $F_n^r(q)$ and $K_n(q,r)$ coincide on the slice $\C_J$ containing $r$. Hence, we have $K_n(q,r)=F_n^r(q)$ everywhere on $\Hq$ thanks to the Identity Principle for slice polyanalytic functions. This ends the proof.
\end{proof}
\begin{rem}
If $n=0$ in Proposition \ref{KernT} we obtain \eqref{Fock}.
\end{rem}
Now, we are ready to show an estimate for the quaternionic-full polyanalytic Bargmann transform and the true quaternionic one.
\begin{prop}\label{estimate1}
Let $\vec{\varphi}=(\varphi_0,...,\varphi_n)$ be a vector valued function in $L^2(\mathbb{R},\mathbb{H}^{n+1})$. For every $q\in\mathbb{H}$ and every $\vec{\varphi}\in L^2(\mathbb{R},\mathbb{H}^{n+1})$, we have 
$$|\mathfrak{B} \vec{\varphi}(q)|\leq \sqrt{2(n+1)}e^{\pi |q|^2} ||\vec{\varphi}||_{L^2(\mathbb{R}, \mathbb{H}^{n+1})}.$$
\end{prop}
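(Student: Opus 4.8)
The plan is to combine the reproducing-kernel structure of the true polyanalytic Fock spaces with the isometry of Theorem \ref{iso}. First I would split $\mathfrak{B}\vec{\varphi}$ into its true-poly components via Definition \ref{vecba}, writing $\mathfrak{B}\vec{\varphi}(q)=\sum_{j=0}^{n}B^{j+1}\varphi_j(q)$, and apply the triangle inequality to reduce the claim to a pointwise bound on each summand $B^{j+1}\varphi_j(q)$.

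Next, since each $\mathcal{F}^j_T(\mathbb{H})$ is a quaternionic reproducing kernel Hilbert space whose kernel is given in Proposition \ref{KernT}, the reproducing property together with the (quaternionic) Cauchy--Schwarz inequality gives $|B^{j+1}\varphi_j(q)|\leq \|B^{j+1}\varphi_j\|_{\mathcal{F}^j_T(\mathbb{H})}\sqrt{K_{j+1}(q,q)}$, where $K_{j+1}$ denotes the reproducing kernel of $\mathcal{F}^j_T(\mathbb{H})$. Here I would invoke Theorem \ref{iso} to replace $\|B^{j+1}\varphi_j\|_{\mathcal{F}^j_T(\mathbb{H})}$ by $\|\varphi_j\|_{L^2(\mathbb{R},\mathbb{H})}$.

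The key computation is then the diagonal value $K_{j+1}(q,q)$. I would fix the slice $\mathbb{C}_I$ containing $q$ and restrict the kernel to it: on a single slice the $*$-product reduces to ordinary multiplication and the expression $\bar q q - q\bar r -\bar q r +\bar r r$ specializes to $|z-w|^2$ for $z,w\in\mathbb{C}_I$, so the formula of Proposition \ref{KernT} becomes the complex true-poly kernel $2e^{2\pi z\bar w}L_j(2\pi|z-w|^2)$. Setting $w=z$ annihilates every term with $k\geq 1$ and leaves only the $k=0$ term $\binom{j}{j}\frac{1}{0!}=1$, whence $K_{j+1}(q,q)=2e^{2\pi|q|^2}$ for every $j$. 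Consequently $|B^{j+1}\varphi_j(q)|\leq \sqrt{2}\,e^{\pi|q|^2}\|\varphi_j\|_{L^2(\mathbb{R},\mathbb{H})}$.

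Finally, summing over $j$ and applying the discrete Cauchy--Schwarz inequality $\sum_{j=0}^{n}a_j\leq \sqrt{n+1}\left(\sum_{j=0}^{n}a_j^2\right)^{1/2}$ with $a_j=\|\varphi_j\|_{L^2(\mathbb{R},\mathbb{H})}$ yields $|\mathfrak{B}\vec{\varphi}(q)|\leq \sqrt{2}\,e^{\pi|q|^2}\sqrt{n+1}\,\|\vec{\varphi}\|_{L^2(\mathbb{R},\mathbb{H}^{n+1})}$ by the definition \eqref{nnorm}, which is exactly the asserted bound. The main obstacle is the diagonal evaluation of the reproducing kernel: one must make sure that restricting the $*$-product expression to the slice through $q$ and collapsing onto the diagonal genuinely leaves only the constant term, i.e. that the Laguerre-type structure contributes $L_j(0)=1$ independently of $j$, since all the $j$-dependence would otherwise spoil the clean constant $\sqrt{2(n+1)}$.
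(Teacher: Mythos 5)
Your proof is correct, but it follows a genuinely different route from the paper's. The paper's proof is a two-step citation: it invokes the pointwise growth estimate $|f(q)|\leq \sqrt{2(n+1)}\,e^{\pi|q|^2}\|f\|_{\widetilde{\mathcal{F}}_{Slice}^{n+1}(\mathbb{H})}$ for arbitrary elements of the full polyanalytic Fock space (quoted from \cite[Prop 4.5]{ADS}), applies it to $f=\mathfrak{B}\vec{\varphi}$, and then uses the isometry of $\mathfrak{B}$ from Theorem \ref{cor2}; the constant $\sqrt{2(n+1)}$ thus arrives prepackaged from the external reference. You instead decompose $\mathfrak{B}\vec{\varphi}=\sum_{j}B^{j+1}\varphi_j$, bound each summand via the reproducing kernel of $\mathcal{F}^j_T(\mathbb{H})$ and the diagonal evaluation $K_{j+1}(q,q)=2e^{2\pi|q|^2}$ (which is exactly the content of Propositions \ref{estimate2} and \ref{Kamal}, stated immediately after this one, combined with Theorem \ref{iso}), and then recover the factor $\sqrt{n+1}$ by discrete Cauchy--Schwarz over the $n+1$ components. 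Your diagonal computation is sound: at $r=q$ the quantity $\bar qq-q\bar r-\bar qr+\bar rr$ vanishes, so only the $k=0$ term $\binom{j}{j}\tfrac{1}{0!}=L_j(0)=1$ survives, independently of $j$, and there is no circularity since Propositions \ref{estimate2} and the isometry do not rely on the present statement. What each approach buys: the paper's argument is shorter and works directly at the level of the ambient space $\widetilde{\mathcal{F}}_{Slice}^{n+1}(\mathbb{H})$, but leans on an external estimate; yours is self-contained within the paper's own results and makes transparent where the two factors in $\sqrt{2(n+1)}=\sqrt{2}\cdot\sqrt{n+1}$ come from (the kernel diagonal and the number of components, respectively).
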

\begin{proof}
From \cite[Prop 4.5]{ADS} we know that for any $f\in\widetilde{\mathcal{F}}_{Slice}^{n+1}(\mathbb{H})$ and $q\in\mathbb{H}$ we have 
	
$$\displaystyle |f(q)|\leq \sqrt{2(n+1)}e^{\pi |q|^2}||f||_{\widetilde{\mathcal{F}}_{Slice}^{n+1}(\mathbb{H})}.$$
	
Now, we specify this inequality for the quaternionic full-polyanalytic Bargmann transform by setting $f(q):=\mathfrak{B}\vec{\varphi}(q)$ and get 
	
$$|\mathfrak{B}\vec{\varphi}(q)|\leq \sqrt{2(n+1)}e^{\pi|q|^2}||\mathfrak{B}\vec{\varphi}||_{\widetilde{\mathcal{F}}_{Slice}^{n+1}(\mathbb{H})}.$$
Thus, by the isometry property proved in Theorem \ref{cor2} we obtain 
$$|\mathfrak{B}\vec{\varphi}(q)|\leq \sqrt{2(n+1)}e^{\pi|q|^2}||\vec{\varphi}||_{L^2(\mathbb{R}, \mathbb{H}^{n+1})}.$$
	
\end{proof}
\begin{prop}\label{estimate2}
For every $f\in\mathcal{F}^n_T(\mathbb{H})$, we have the following estimate 
$$|f(q)|\leq \sqrt{2}e^{\pi |q|^2}||f||_{\mathcal{F}^n_T(\mathbb{H})}.$$
\end{prop}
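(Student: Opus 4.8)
The plan is to exploit that $\mathcal{F}^n_T(\mathbb{H})$ is a reproducing kernel Hilbert space whose kernel $K_{n+1}(q,r)$ was computed in Proposition \ref{KernT}. By the reproducing property $f(q)=\langle f, K_{n+1}(\cdot,q)\rangle_{\mathcal{F}^n_T(\mathbb{H})}$ together with the quaternionic Cauchy--Schwarz inequality, one gets
$$ |f(q)|\leq \|K_{n+1}(\cdot,q)\|_{\mathcal{F}^n_T(\mathbb{H})}\,\|f\|_{\mathcal{F}^n_T(\mathbb{H})}=\sqrt{K_{n+1}(q,q)}\,\|f\|_{\mathcal{F}^n_T(\mathbb{H})}, $$
where the last equality again uses the reproducing property, now applied to $g=K_{n+1}(\cdot,q)$ itself so that $\|K_{n+1}(\cdot,q)\|^2=\langle K_{n+1}(\cdot,q),K_{n+1}(\cdot,q)\rangle=K_{n+1}(q,q)$. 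Hence the whole estimate reduces to showing that the kernel on the diagonal equals $K_{n+1}(q,q)=2e^{2\pi|q|^2}$, since then $\sqrt{K_{n+1}(q,q)}=\sqrt{2}\,e^{\pi|q|^2}$, which is exactly the claimed constant.

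To evaluate $K_{n+1}$ on the diagonal I would fix $q\in\mathbb{H}$ and choose a slice $\mathbb{C}_J$ containing $q$. As established inside the proof of Proposition \ref{KernT}, for $r\in\mathbb{C}_J$ the kernel $K_{n+1}(\cdot,r)$ restricted to $\mathbb{C}_J$ coincides with the reproducing kernel of the \emph{complex} true polyanalytic Fock space on that slice; there the $*$-product collapses to the ordinary pointwise product and the quantity $\bar q q-q\bar r-\bar q r+\bar r r$ becomes $|q-r|^2$. Consequently on $\mathbb{C}_J$ the kernel takes the familiar Laguerre form $2e^{2\pi q\bar r}\sum_{k=0}^{n}(-1)^k\binom{n}{n-k}\frac{1}{k!}(2\pi|q-r|^2)^k=2e^{2\pi q\bar r}L_n(2\pi|q-r|^2)$. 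Setting $r=q$ makes the argument $|q-r|^2$ vanish, so only the $k=0$ term survives (equivalently $L_n(0)=\binom{n}{n}=1$), and we obtain $K_{n+1}(q,q)=2e^{2\pi|q|^2}$, independently of $n$. Combining the two steps yields $|f(q)|\leq \sqrt{2}\,e^{\pi|q|^2}\|f\|_{\mathcal{F}^n_T(\mathbb{H})}$, as desired.

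The delicate point I expect to be the main obstacle is precisely this diagonal evaluation of the $*$-product kernel: one cannot naively substitute $r=q$ into the $*$-powers, and the substitution must be justified by restricting to the slice $\mathbb{C}_J$, where the $*$-product becomes pointwise and the computation reduces to the classical complex true poly-Fock kernel, exactly as in the proof of Proposition \ref{KernT}. A useful consistency check, which I would include, is that since by Theorem \ref{sum} the space $\widetilde{\mathcal{F}}_{Slice}^{n+1}(\mathbb{H})$ is the orthogonal direct sum of the $\mathcal{F}^j_T(\mathbb{H})$ for $j=0,\dots,n$, its diagonal kernel is the sum $\sum_{j=0}^{n}2e^{2\pi|q|^2}=2(n+1)e^{2\pi|q|^2}$; this reproduces precisely the constant $\sqrt{2(n+1)}$ of \cite[Prop. 4.5]{ADS} used in Proposition \ref{estimate1}, confirming that each single true level contributes the diagonal value $2e^{2\pi|q|^2}$ and hence the sharp constant $\sqrt 2$.
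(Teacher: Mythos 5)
Your proposal is correct and follows essentially the same route as the paper: reproducing property plus Cauchy--Schwarz, then the diagonal evaluation $K_{n+1}(q,q)=2e^{2\pi|q|^2}$ from Proposition \ref{KernT}. The paper states this diagonal value without comment, whereas you justify it by restricting to a slice where the $*$-product is pointwise and only the $k=0$ (Laguerre) term survives --- a worthwhile clarification, but not a different proof.
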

\begin{proof}
From the reproducing kernel property of the space $\mathcal{F}_{T}^{n}(\mathbb{H})$ and the Cauchy-Schwartz inequality we have
$$|f(q)|=\left|\langle f, K_q^n \rangle_{\mathcal{F}_T^n(\mathbb{H})} \right|\leq ||f||_{\mathcal{F}^n_T(\mathbb{H})}||K^n_q||_{\mathcal{F}^n_T(\mathbb{H})}.$$
In particular, using Proposition \ref{KernT} we have $$||K^n||_{\mathcal{F}^n_T(\mathbb{H})}^2=K^n(q,q)=2 e^{2\pi|q|^2}.$$
Thus, we have $$||K^n||_{\mathcal{F}^n_T(\mathbb{H})}=\sqrt{2} e^{\pi |q|^2}.$$
	
Finally, we obtain $$|f(q)|\leq \sqrt{2}e^{\pi |q|^2} ||f||_{\mathcal{F}^n_T(\mathbb{H})}.$$
\end{proof}
\begin{prop}
\label{Kamal}
For every $q\in\mathbb{H}$ and $\varphi\in L^2(\mathbb{R},\mathbb{H})$, we have $$|B^{n+1}\varphi(q)|\leq \sqrt{2}e^{\pi|q|^2}||\varphi ||_{L^2(\mathbb{R},\mathbb{H})}.$$
\end{prop}
\begin{proof}
We follow a similar reasoning of Proposition \ref{estimate1}, then we apply Theorem 3.9.
\end{proof}
\begin{rem}
For $n=0$ in Proposition \ref{estimate1} and Proposition \ref{Kamal} we get the same estimate of \cite[Prop. 4.3]{DG} with $\nu=2\pi$.
\end{rem}

\section{Closed formula of the true quaternionic polyanalytic Bargmann transform  }
Inspired from Vasilevski paper \cite{V} we write the following closed integral transform of the true polyanalytic Bargmann in the complex case, for $ \varphi \in L^2(\mathbb{C})$
\begin{equation}
\label{Kamal1}
\widehat{B}^{k+1}\varphi (z)=2^{\frac{3}{4}}(2^kk!(2\pi)^k)^{-\frac{1}{2}}\int_\mathbb{R}e^{-\pi(z^2+x^2)+2\pi\sqrt{2}zx}H_{k}\left(\frac{z+\bar{z}}{\sqrt{2}}-x\right)\varphi(x)dx,
\end{equation}
where $H_k$ are the weighted Hermite polynomials defined as
\begin{equation}
\label{herm2}
H_j(y)=(-1)^j e^{2 \pi y^2} \frac{d^j}{dy^j} e^{-2 \pi y^2}= j! \sum_{m=0}^{[\frac{j}{2}]} \frac{(-1)^m (4 \pi y)^{j-2m}}{m! (j-2 m)!},
\end{equation}
and $[.]$ denotes the integer part.
We want to prove the equality between $\widehat{B}^{k+1}\varphi (z)$ and $B^{k+1}\varphi (z)$, which is defined by
$$
B^{k+1}\varphi(z)=\sqrt{\frac{1}{(2\pi)^k k!}}(-1)^k\displaystyle \sum_{j=0}^k\binom{k}{j}(-2\pi\overline{z})^j \partial_{z}^{k-j}(B\varphi)(z).
$$
Thus, by definition of the Segal-Bargmann transform we have 
\begin{eqnarray*}
B^{k+1}\varphi(z) \! \! \! \! &=& \! \! \!  \!2 ^{ \frac{3}{4}}\sqrt{\frac{1}{(2\pi)^k k!}}(-1)^k\displaystyle\int_\mathbb{R} \sum_{j=0}^k\binom{k}{j}(-2\pi\overline{z})^j \partial_{z}^{k-j}\left(e^{-\pi(z^2+x^2)+2\pi\sqrt{2}zx}\right)\\
&& \cdot \varphi(x)dx\\
&=& \! \! \!  \! 2 ^{ \frac{3}{4}}\sqrt{\frac{1}{(2\pi)^k k!}}(-1)^k\displaystyle\int_\mathbb{R} \left[\left(\partial_z-2\pi \overline{z}\right)^k e^{-\pi(z^2+x^2)+2\pi\sqrt{2}z x} \right] \varphi(x) dx.
\end{eqnarray*}
Then, in order to prove the equality between $\widehat{B}^{k+1}$ and $B^{k+1}$ we need the following result.
\begin{prop}\label{Bhat}
For any $k\geq 0$, $z \in \mathbb{C}$ and $x \in \mathbb{R}$, we have 
$$\left(\partial_z-2\pi \overline{z}\right)^k e^{-\pi(z^2+x^2)+2\pi\sqrt{2}z x}=(-1)^k2^{-\frac{k}{2}}e^{-\pi(z^2+x^2)+2\pi\sqrt{2}zx}H_{k}\left(\frac{z+\overline{z}}{\sqrt{2}}-x\right).$$
\end{prop}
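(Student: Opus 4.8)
The statement is a pure identity on the complex plane involving the operator $\partial_z - 2\pi\overline{z}$ applied $k$ times to the Gaussian $g(z,x) := e^{-\pi(z^2+x^2)+2\pi\sqrt{2}zx}$, so the plan is to proceed by induction on $k$ and to reduce the repeated action of the operator to the ordinary one-variable differentiation that produces the Hermite polynomials $H_k$ in \eqref{herm2}. The base case $k=0$ is immediate since both sides equal $g(z,x)$ (recall $H_0\equiv 1$). For the inductive step I would assume the formula for $k$ and apply $\partial_z - 2\pi\overline{z}$ once more to the right-hand side $(-1)^k 2^{-k/2} g(z,x) H_k\!\left(\tfrac{z+\overline{z}}{\sqrt{2}}-x\right)$.

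The key computational observation is how the operator interacts with the Gaussian factor. I would first compute $\partial_z g(z,x) = (-2\pi z + 2\pi\sqrt{2}x)\,g(z,x)$, so that
\begin{equation*}
(\partial_z - 2\pi\overline{z})\,g(z,x) = \bigl(-2\pi z - 2\pi\overline{z} + 2\pi\sqrt{2}x\bigr)\,g(z,x) = -2\pi\sqrt{2}\left(\frac{z+\overline{z}}{\sqrt{2}} - x\right) g(z,x).
\end{equation*}
This shows that the operator, when it hits only the Gaussian, reproduces $g$ times a multiple of the ``Hermite variable'' $y := \tfrac{z+\overline{z}}{\sqrt{2}} - x$. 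The second ingredient is that when $\partial_z$ hits the Hermite polynomial factor $H_k(y)$, by the chain rule it produces $\tfrac{1}{\sqrt{2}} H_k'(y)$, since $\partial_z y = \tfrac{1}{\sqrt{2}}$ (here $\overline{z}$ is treated as independent of $z$, consistent with how $\partial_z$ acts). The crucial point is that the $-2\pi\overline{z}$ part of the operator only sees the Gaussian and combines with the $\partial_z g$ term exactly to form the variable $y$, with no leftover $\overline{z}$-dependence beyond $y$ itself.

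Assembling these, applying $\partial_z - 2\pi\overline{z}$ to $(-1)^k 2^{-k/2} g\, H_k(y)$ I expect to obtain
\begin{equation*}
(-1)^k 2^{-k/2} g\left[ -2\pi\sqrt{2}\, y\, H_k(y) + \frac{1}{\sqrt{2}} H_k'(y)\right],
\end{equation*}
and I would then match this against the desired $(-1)^{k+1} 2^{-(k+1)/2} g\, H_{k+1}(y)$. This reduces the whole problem to verifying the three-term recurrence $H_{k+1}(y) = 4\pi y\, H_k(y) - H_k'(y)$ for the weighted Hermite polynomials defined in \eqref{herm2}, which follows directly from their Rodrigues-type definition $H_j(y) = (-1)^j e^{2\pi y^2}\tfrac{d^j}{dy^j}e^{-2\pi y^2}$ and which I would either cite from Appendix B or verify in one line. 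The main obstacle, and the step deserving the most care, is bookkeeping the normalization constants $2^{-k/2}$, the factors $2\pi\sqrt{2}$, and the sign $(-1)^k$ so that they propagate correctly through the recurrence; in particular one must confirm that $-2\pi\sqrt{2} = 4\pi/\sqrt{2}$ so that the coefficient $2^{-1/2}$ extracted from $-2\pi\sqrt{2}$ matches the $2^{-(k+1)/2}$ target and the $4\pi y$ coefficient of the Hermite recurrence emerges cleanly.
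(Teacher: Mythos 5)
Your proposal is correct and follows essentially the same route as the paper: induction on $k$, the Leibniz rule, the observation that $(\partial_z-2\pi\overline z)$ acting on the Gaussian produces $-2\pi\sqrt{2}\,y$ times the Gaussian, and $\partial_z H_k(y)=\tfrac{1}{\sqrt{2}}H_k'(y)$. The only cosmetic difference is that you close the induction with the single identity $H_{k+1}(y)=4\pi y H_k(y)-H_k'(y)$ from the Rodrigues formula, whereas the paper first substitutes $H_k'(y)=4\pi k H_{k-1}(y)$ and then invokes the three-term recurrence from Appendix B; these are equivalent.
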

\begin{proof}	
We prove the statement by induction. For $k=0$ we have $H_0\left(\frac{z+\overline{z}}{\sqrt{2}}-x\right)=1$, thus the result holds in this case. Let us assume that the equality is true for $k$. We will prove the result for $k+1$ thanks to the induction hypothesis and Leibniz rule. Indeed, we have 

\begin{eqnarray*}
&& \! \! \! \!  \! \!  (\partial_z-2\pi\overline{z})^{k+1} e^{-\pi(z^2+x^2)+2\pi\sqrt{2}zx}\\ 
&=& \! \! \! \! (\partial_z-2\pi\overline{z})(\partial_z-2\pi\overline{z})^k \left(e^{-\pi(z^2+x^2)+2\pi\sqrt{2}zx} \right) \\
&=& \! \! \! \! (\partial_z-2\pi\overline{z})\left[(-1)^k2^{-\frac{k}{2}}e^{-\pi(z^2+x^2)+2\pi\sqrt{2}zx}H_{k}\left(\frac{z+\overline{z}}{\sqrt{2}}-x\right)\right] \\
&=& \! \! \! \! (-1)^k2^{-\frac{k}{2}} \biggl[ \partial_z(e^{-\pi(z^2+x^2)+2\pi\sqrt{2}zx})H_{k}\left(\frac{z+\overline{z}}{\sqrt{2}}-x\right) \\
&&  \! \! \! \!+ e^{-\pi(z^2+x^2)+2\pi\sqrt{2}zx}\partial_z  H_{k}\left(\frac{z+\overline{z}}{\sqrt{2}}-x\right) \\
&&\! \! \! \! -2\pi\overline{z} H_{k}\left(\frac{z+\overline{z}}{\sqrt{2}}-x\right)  e^{-\pi(z^2+x^2)+2\pi\sqrt{2}zx} \biggl]. 
\end{eqnarray*}	

We write $z=u+iv$ and develop the computations using formula \eqref{A2} in Appendix B (with $\nu=2 \pi$) to get
\[ \begin{split}
\displaystyle \partial_z  H_{k}\left(\frac{z+\overline{z}}{\sqrt{2}}-x\right) & =\frac{1}{2}\left(\frac{\partial}{\partial u}-i\frac{\partial}{\partial v}\right)H_k(\sqrt{2}u-x)  \\
&=\frac{\sqrt{2}}{2} \frac{d}{du}H_k(\sqrt{2}u-x) \\
&= 2\sqrt{2}\pi kH_{k-1}\left(\frac{z+\overline{z}}{\sqrt{2}}-x\right)
\end{split}
\]
and  $$\partial_z\left( e^{-\pi(z^2+x^2)+2\pi\sqrt{2}zx} \right)=(-2\pi z+2\pi\sqrt{2}x) e^{-\pi(z^2+x^2)+2\pi\sqrt{2}zx}. $$
	
Thus, if we set $\displaystyle y=\frac{z+\overline{z}}{\sqrt{2}}-x$ and by using formula \eqref{A1} in Appendix B (with $\nu=2 \pi$) we obtain
\begin{eqnarray*}
&& \! \! \! \! (\partial_z-2\pi\overline{z})^{k+1} e^{-\pi(z^2+x^2)+2\pi\sqrt{2}zx}\\
&=& \! \! \! \! (-1)^k2^{-\frac{k}{2}} e^{-\pi(z^2+x^2)+2\pi\sqrt{2}zx} \bigl[(-2\pi z+2\pi\sqrt{2}x)H_{k}(y)\\  
&& \! \! \! \!+ 2\sqrt{2}\pi kH_{k-1}(y)-2\pi\overline{z} H_{k}(y) \biggl] \\
&=& \! \! \! \! (-1)^{k+1}\frac{2^{-\frac{k}{2}}}{\sqrt{2}} \sqrt{2} e^{-\pi(z^2+x^2)+2\pi\sqrt{2}zx}\left[ 2\pi\sqrt{2}yH_k(y) -2\sqrt{2}\pi k H_{k-1}(y) \right] \\
&=& \! \! \! \! (-1)^{k+1}2^{-\frac{k+1}{2}}e^{-\pi(z^2+x^2)+2\pi\sqrt{2}zx}\left(4\pi yH_k(y)-4\pi k  H_{k-1}(y)\right)\\ 
&=& \! \! \! \! (-1)^{k+1}2^{-\frac{k+1}{2}}e^{-\pi(z^2+x^2)+2\pi\sqrt{2}zx}H_{k+1}(y).
\end{eqnarray*}

Thus replacing $y$ by $\frac{z+\overline{z}}{\sqrt{2}}-x$ we have the result for $k+1$. 
\end{proof}
Due to Proposition \ref{Bhat} we have that 
$$\widehat{B}^{k+1}\varphi(z)=B^{k+1}\varphi(z).$$
\begin{lem}
The weighted Hermite polynomials $H_k\left(\frac{q+\overline{q}}{\sqrt{2}}-x\right)$, with $ q \in \mathbb{H}$ and $x \in \mathbb{R}$, are slice polyanalytic of order $k+1$ on $ \mathbb{H}$.
\end{lem}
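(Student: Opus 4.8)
The plan is to verify the defining poly Cauchy--Riemann equation \eqref{CR} of order $k+1$ directly. First I would pass to slice coordinates: writing $q=u+Iv$ with $u,v\in\mathbb{R}$ and $I\in\mathbb{S}$, one has $q+\overline{q}=2u=2\,\mathrm{Re}(q)$, so that
$$H_k\!\left(\frac{q+\overline{q}}{\sqrt{2}}-x\right)=H_k(\sqrt{2}\,u-x).$$
Thus the function depends only on the real part $u$ of $q$ (and on the fixed real parameter $x$), and in the notation of Definition \ref{polycorrected} it is the slice function with $\alpha(u,v)=H_k(\sqrt{2}\,u-x)$ and $\beta(u,v)=0$. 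Since $\alpha$ is independent of $v$ and $\beta\equiv 0$, the compatibility conditions $\alpha(u,-v)=\alpha(u,v)$ and $\beta(u,-v)=-\beta(u,v)$ hold trivially, and $\alpha,\beta$ are smooth, hence of class $\mathcal{C}^{k+1}$; so this is a genuine slice function and it remains only to check the poly Cauchy--Riemann equation.

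Next I would fix $I\in\mathbb{S}$ and show that, on functions independent of $v$, the operator $(\partial_u+I\partial_v)$ acts as $\partial_u$. More precisely, by induction on $m$ one gets $(\partial_u+I\partial_v)^m g=\partial_u^m g$ for every $g$ depending only on $u$: the base case is immediate since $\partial_v g=0$, and at each step $\partial_u^m g$ is again independent of $v$, so the $I\partial_v$ term vanishes. Applying this with $g=H_k(\sqrt{2}\,u-x)$ and $m=k+1$ yields
$$\frac{1}{2^{k+1}}(\partial_u+I\partial_v)^{k+1}H_k(\sqrt{2}\,u-x)=\frac{(\sqrt{2})^{k+1}}{2^{k+1}}\,H_k^{(k+1)}(\sqrt{2}\,u-x).$$

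The final and decisive observation is that $H_k$ is a polynomial of degree exactly $k$: from the explicit expression \eqref{herm2} its leading term is $(4\pi y)^k$, so its $(k+1)$-th derivative vanishes identically. Hence the right-hand side above is $0$ for every $I\in\mathbb{S}$, which is precisely equation \eqref{CR} of order $k+1$; by Definition \ref{polycorrected} the function lies in $\mathcal{SP}_{k+1}(\mathbb{H})$. I do not expect any real obstacle here: the content is the bookkeeping reduction of $(\partial_u+I\partial_v)^{k+1}$ to $\partial_u^{k+1}$ together with the degree count for $H_k$. As an alternative route, one could instead invoke the poly-decomposition of Proposition \ref{Kam1}: since $q$ and $\overline{q}$ commute, expanding $H_k\!\left(\frac{q+\overline{q}}{\sqrt{2}}-x\right)$ by the binomial theorem rewrites it as $\sum_{j=0}^{k}\overline{q}^{\,j} g_j(q)$ with each $g_j$ a polynomial in $q$ (hence slice regular), which is exactly the form characterizing slice polyanalyticity of order $k+1$.
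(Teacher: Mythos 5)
Your proof is correct and follows essentially the same route as the paper's: both arguments reduce to the observation that $\frac{q+\overline{q}}{\sqrt{2}}-x$ depends only on $\mathrm{Re}(q)$, so that on such functions $\overline{\partial_I}$ acts as a constant multiple of $\frac{d}{du}$ and $k+1$ applications annihilate the degree-$k$ polynomial $H_k$; the paper simply runs this as an induction on the individual monomials in the explicit expansion of $H_k$ rather than invoking the degree count all at once as you do. Your alternative via the binomial expansion and the poly-decomposition of Proposition \ref{Kam1} is also valid and would work equally well.
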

\begin{proof}
We know that $$\displaystyle H_k\left(\frac{q+\overline{q}}{\sqrt{2}}-x\right)=k!\sum_{m=0}^{[\frac{k}{2}]}\frac{(-1)^m \left(4\pi(\frac{q+\overline{q}}{\sqrt{2}}-x) \right)^{k-2m}}{m!(k-2m)!}.$$
We note that $H_k\left(\frac{q+\overline{q}}{\sqrt{2}}-x \right)$ is a slice function since it is the sum of slice functions. To justify that it is slice polyanalytic of order $k+1$, we proceed by induction on $k$. In order, to get the thesis it is enough to prove that 
$$\overline{\partial_I}^{k+1}\left(4\pi\left(\frac{q+\overline{q}}{\sqrt{2}}-x\right)^{k-2m}\right)=0, \qquad  0 \leq m\leq \left[\frac{k}{2}\right].$$
Let us begin the induction: the case $k=1$ is trivial. Now, we assume that the statement holds for $k$ and we prove it for $k+1$. We have by induction hypothesis 

\begin{eqnarray*}
\overline{\partial_I}^{k+2}\left(4\pi\left(\frac{q+\overline{q}}{\sqrt{2}}-x \right)\right)^{k+1-2m}
\! \! \! \! &=& \! \! \! \! \overline{\partial_I}^{k+1}\overline{\partial_I}\left(4\pi\left(\frac{q+\overline{q}}{\sqrt{2}}-x \right)\right)^{k+1-2m}\\
&=& \! \! \! \!(k+1-2m)2\sqrt{2}\pi \\
&& \! \! \! \! \cdot\overline{\partial_I}^{k+1}\left(4\pi\left(\frac{q+\overline{q}}{\sqrt{2}}-x \right)\right)^{k-2m} \\
&=& \! \! \! \! 0.
\end{eqnarray*}

This means that when we apply $\overline{\partial_I}^{k+1}$ to $H_{k}\left(\frac{q+\overline{q}}{\sqrt{2}}-x\right)$ each addendum is zero. Thus, $$\overline{\partial_I}^{k+1}H_k\left(\frac{q+\overline{q}}{\sqrt{2}}-x\right)=0.$$
Let us recall $\widehat{B}^{k+1}\varphi$ expression in the quaternionic setting 
$$\displaystyle \widehat{B}^{k+1}\varphi(q)=2^{\frac{3}{4}}(2^k k!(2\pi)^{k})^{-\frac{1}{2}}\int_{\mathbb{R}}e^{-\pi(q^2+x^2)+2\pi\sqrt{2}qx}H_{k}\left(\frac{q+\overline{q}}{\sqrt{2}}-x\right)\varphi(x) dx.$$
	
Since the function $G(q):=e^{-\pi(q^2+x^2)+2\pi\sqrt{2}x}$ is slice regular and $H_k\left(\frac{q+\bar{q}}{\sqrt{2}}-x\right)$ is intrinsic and we proved that it is slice polyanalytic of order $k+1$. Then, by \cite[Prop 3.3]{ADS} we have that the function $e^{-\pi(q^2+x^2)+2\pi\sqrt{2}q}H_{k}\left(\frac{q+\overline{q}}{\sqrt{2}}-x\right)$ is slice polyanalytic of order $k+1$. This means that $\widehat{B}^{k+1}\varphi$ is slice polyanalytic of order $k+1$.
\end{proof}

\begin{prop}
The two true quaternionic full-polyanalytic Bargmann transforms $B^{k+1}$ and $\widehat{B}^{k+1}$ are equal.
\end{prop}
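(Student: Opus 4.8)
The plan is to invoke the Identity Principle for slice polyanalytic functions (Proposition \ref{Kam2}): since both $B^{k+1}\varphi$ and $\widehat{B}^{k+1}\varphi$ are slice polyanalytic of order $k+1$ on $\mathbb{H}$, it suffices to check that they agree on a single slice $\mathbb{C}_I$, and the equality then propagates to all of $\mathbb{H}$.

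First I would record that both objects are slice polyanalytic of order $k+1$. For $\widehat{B}^{k+1}\varphi$ this is exactly the content of the preceding lemma, where the integrand $e^{-\pi(q^2+x^2)+2\pi\sqrt{2}qx}H_k\left(\frac{q+\overline{q}}{\sqrt{2}}-x\right)$ was shown to be slice polyanalytic of order $k+1$ in $q$. For $B^{k+1}\varphi$ it follows from Definition \ref{TPB} together with Proposition \ref{R2}, since $\mathcal{B}\varphi$ is slice regular and the expression $B^{k+1}\varphi(q)=(-1)^k\sqrt{\frac{1}{(2\pi)^k k!}}\, e^{2\pi|q|^2}\partial_s^k(e^{-2\pi|q|^2}\mathcal{B}\varphi(q))$ from \eqref{bar1} is precisely of the form treated there.

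Next I would fix an imaginary unit $I\in\mathbb{S}$ and restrict both transforms to the slice $\mathbb{C}_I$, writing $q=z\in\mathbb{C}_I$. The key observation is that both $B^{k+1}$ and $\widehat{B}^{k+1}$ are right-$\mathbb{H}$-linear: the kernel and window factors together with the slice derivatives act on the left, while $\varphi$ enters on the right. Hence, choosing $J\in\mathbb{S}$ with $I\perp J$ and splitting $\varphi=\varphi_1+\varphi_2 J$ with $\varphi_1,\varphi_2:\mathbb{R}\to\mathbb{C}_I$, each transform splits on $\mathbb{C}_I$ into a $\mathbb{C}_I$-valued contribution from $\varphi_1$ and a contribution of the form $(\,\cdot\,)J$ from $\varphi_2$, because the restricted kernel is $\mathbb{C}_I$-valued. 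On $\mathbb{C}_I$ the slice derivative $\partial_s$ reduces to the complex derivative $\partial_z$ and $\mathcal{B}\varphi$ reduces to the complex Segal--Bargmann transform, so the restricted operators coincide with the complex $B^{k+1}_{\mathbb{C}_I}$ and $\widehat{B}^{k+1}_{\mathbb{C}_I}$ applied componentwise. Since the complex identity $\widehat{B}^{k+1}\varphi(z)=B^{k+1}\varphi(z)$ has already been established as a consequence of Proposition \ref{Bhat}, the two restrictions agree on $\mathbb{C}_I$.

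With agreement on one slice and slice polyanalyticity of order $k+1$ for both functions, the Identity Principle (Proposition \ref{Kam2}) yields $B^{k+1}\varphi=\widehat{B}^{k+1}\varphi$ everywhere on $\mathbb{H}$. The main obstacle I anticipate is the careful bookkeeping in the reduction to the slice: one must verify that the right-$\mathbb{H}$-linear splitting $\varphi=\varphi_1+\varphi_2J$ is compatible both with the slice derivatives appearing in \eqref{bar1} and with the integral defining $\widehat{B}^{k+1}$, so that the already-known complex equality may legitimately be applied to each component; once this is in place the remainder is a routine translation of the complex computation into the quaternionic slice.
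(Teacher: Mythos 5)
Your proposal is correct and follows essentially the same route as the paper's proof: both transforms are slice polyanalytic of order $k+1$, they coincide on a slice $\mathbb{C}_I$ because there they reduce to the complex transforms already shown equal via Proposition \ref{Bhat}, and the Identity Principle (Proposition \ref{Kam2}) then forces equality on all of $\mathbb{H}$. The extra bookkeeping you add about the splitting $\varphi=\varphi_1+\varphi_2 J$ is a harmless elaboration of a step the paper leaves implicit.
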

\begin{proof}
Since $B^{k+1}\varphi(z)=\widehat{B}^{k+1}\varphi(z)$ and $\widehat{B}^{k+1}\varphi, B^{k+1}\varphi$ are slice polyanalytic of order $k+1$ by the Identity Principle for slice polyanalytic functions we get that 
$$\widehat{B}^{k+1}\varphi(q)= B^{k+1}\varphi(q).$$
\end{proof}
\begin{rem}
The formula for $\widehat{B}^{k+1}\varphi$ is a closed formula for the polyanalytic Bargmann transform. Moreover, for $k=0$ it turns out that $\widehat{B}^{k+1}\varphi$ reduces to the quaternionic analogue of the Segal-Bargmann transform (see \eqref{defQSBT} with $\nu=2 \pi$).
\end{rem}

\section{Quaternion short-time Fourier transform with normalized Hermite functions as windows }
The short-time Fourier transform provides a simultaneous description of the temporal and spectral behaviour of a signal, which varies over the time. In order to find the frequency spectrum of a signal $ \varphi$ at a specific time $x$, one can localize the signal $ \varphi$ to neighbourhood of $x$ and after evaluates the Fourier transform of the restriction. This procedure of localization is made by choosing  a cut-off function, called "\emph{window function}".
\\ The aim of this section is to introduce a quaternionic analogue of the short-time Fourier transform in dimension one with normalized weighted Hermite functions as windows, $\psi_n(t)= \frac{h_n^{2 \pi}(t)}{\| h_n^{2 \pi} \|_{L^2(\mathbb{R}, \mathbb{H})}}$, see \eqref{Her}. To develop this concept we need the theory of slice polyanalytic functions, see \cite{ADS}. We start by considering this formula \cite[Prop.1]{A}
\begin{equation}
\label{start}
V_{\psi_n} \varphi(x, \omega)= e^{- \pi i x \omega} G^{n+1} \varphi \left (\frac{\bar{z}}{\sqrt{2}} \right) e^{- \frac{\pi | z|^2}{2}},
\end{equation}
where the variables $(x, \omega) \in \mathbb{R}^2$ have been converted into a complex vector $z=x+i \omega$, and $G^{n+1} \varphi(z)$ is the complex true polyanalytic version of the Segal-Bargmann transform.
\\ In this context it is possible to consider a quaternion short-time Fourier transform of a vector-valued function $ \vec{\varphi}=(\varphi_0,..., \varphi_n)$ with respect to $ \vec{\psi}=( \psi_0,...,\psi_n)$. Also for this kind of signal it is possible to have a relation as \eqref{start}. Let us consider the following formula \cite[Formula 20]{AF}
\begin{equation}
\label{start1}
\mathbf{V}_{\vec{\psi}} \vec{\varphi}(x, \omega)= e^{- \pi i x \omega} \mathbf{G} \vec{\varphi} \left (\frac{\bar{z}}{\sqrt{2}} \right) e^{- \frac{\pi | z|^2}{2}},
\end{equation}
where $\mathbf{G}$ is the complex full-polyanalytic Segal-Bargmann transform. We want to extend \eqref{start} and \eqref{start1} to the quaternionic setting. 
\begin{defn}
\label{ST1}
Let $ \varphi: \mathbb{R} \to \mathbb{H}$ be a function in $L^2(\mathbb{R}, \mathbb{H})$. We define the 1D-true polyanalytic quaternion short time Fourier transform (true-poly QSTFT) with respect to $ \psi_n(t)= \frac{h_n^{2 \pi}(t)}{\| h_n^{2 \pi} \|_{L^2(\mathbb{R}, \mathbb{H})}}$ as
\begin{equation}
\label{QST1}
\mathcal{V}_{\psi_n} \varphi(x, \omega)=e^{-I \pi x \omega} B^{n+1}(\varphi) \left( \frac{\bar{q}}{\sqrt{2}} \right) e^{- \frac{|q|^2 \pi}{2}},
\end{equation}
where $q=x+I \omega$ and $B^{n+1}$ is the true quaternionic polyanalytic Bargmann transform, defined in \eqref{bar1}.
\end{defn}
It is possible to define a vector-valued quaternionic short-time Fourier transform.
\begin{defn}
Let $ \vec{\varphi}=( \varphi_0,..., \varphi_n)$ be a vector-valued function in $L^2(\mathbb{R}, \mathbb{H}^{n+1})$. We define the 1D-polyanalytic quaternion short-time Fourier transform (full-poly QSTFT) with respect to $ \vec{\psi}= (\psi_0,..., \psi_n)$ as
\begin{equation}
\label{QST2}
\mathbb{V}_{\vec{\psi}}\vec{\varphi}(x, \omega)=e^{-I \pi x \omega} \mathfrak{B}(\vec{\varphi}) \left( \frac{\bar{q}}{\sqrt{2}} \right) e^{- \frac{|q|^2 \pi}{2}},
\end{equation}
where $q=x+I \omega$ and $ \mathfrak{B}$ is the quaternionic full-polyanalytic Bargmann transform, defined in \eqref{bar3}.
\end{defn}
\begin{rem}
For $n=0$ in \eqref{QST1} and \eqref{QST2} we obtain the definition of the 1D-quaternion short-time Fourier transform with respect to the Gaussian window $ g(t)=2^{1/4} e^{- \pi t^2}$, see \cite[Def. 5.1]{DMD}. Indeed, by \eqref{Her} we get
$$ \psi_0(t)= \frac{h_0^{2 \pi}(t)}{ \|h_0^{2 \pi} \|_{L^2(\mathbb{R}, \mathbb{H})}}=\frac{e^{- \pi t^2}}{\left( \frac{\pi}{2 \pi} \right)^{\frac{1}{4}}}=2^{1/4} e^{- \pi t^2}.$$
Moreover, we have already observed that $B^1 \varphi= \mathcal{B} \varphi$, which is the quaternionic analogue of the Bargmann transform. Therefore, with formulas \eqref{QST1} and \eqref{QST2} we are working in a more general setting with respect to the paper \cite{DMD}.
\end{rem}

It is possible to put in relation the true-poly QSTFT and the full-poly one. Indeed, we have the following result.
\begin{prop}
\label{sum2}
Let $ \vec{\varphi}=( \varphi_0,..., \varphi_n)$ be a vector valued function in $L^2( \mathbb{R}, \mathbb{H}^{n+1})$. The sum of the true-poly QSTFTs with respect to $ \psi_j$ of $ \varphi_j$, with $0 \leq j \leq n$, is the full-poly QSTFT with respect to $ \vec{\psi}$ of $ \vec{\varphi}$, i.e.
\begin{equation}
\label{sum1}
\mathbb{V}_{\vec{\psi}}\vec{\varphi}(x, \omega)= \sum_{j=0}^n \mathcal{V}_{\psi_j} \varphi_j(x, \omega).
\end{equation}
\end{prop}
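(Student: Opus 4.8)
The plan is to reduce the identity to the defining decomposition of the quaternionic full-polyanalytic Bargmann transform, combined with elementary distributivity of quaternionic multiplication over a finite sum. I would start directly from the definition \eqref{QST2} of the full-poly QSTFT,
$$\mathbb{V}_{\vec{\psi}}\vec{\varphi}(x, \omega)=e^{-I \pi x \omega} \mathfrak{B}(\vec{\varphi}) \left( \frac{\bar{q}}{\sqrt{2}} \right) e^{- \frac{|q|^2 \pi}{2}}, \qquad q = x + I\omega,$$
and then substitute the expression for $\mathfrak{B}$ provided by Definition \ref{vecba}, namely $\mathfrak{B}\vec{\varphi}(r) = \sum_{j=0}^n B^{j+1}\varphi_j(r)$ for every $r\in\mathbb{H}$. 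Evaluating at $r = \bar{q}/\sqrt{2}$ turns the middle factor into a finite sum of true quaternionic polyanalytic Bargmann transforms $B^{j+1}\varphi_j(\bar{q}/\sqrt{2})$.

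The key step is then to push the two scalar factors inside the sum. The right-hand factor $e^{-|q|^2\pi/2}$ is a strictly positive real number, so it commutes with every quaternion and distributes over the finite sum with no ordering concern. The left-hand factor $e^{-I\pi x\omega}$ lies in the slice $\mathbb{C}_I$ and multiplies on the left; since left multiplication is additive, $e^{-I\pi x\omega}\bigl(\sum_{j=0}^n B^{j+1}\varphi_j(\bar{q}/\sqrt{2})\bigr) = \sum_{j=0}^n e^{-I\pi x\omega} B^{j+1}\varphi_j(\bar{q}/\sqrt{2})$. Each resulting summand $e^{-I\pi x\omega} B^{j+1}\varphi_j(\bar{q}/\sqrt{2}) e^{-|q|^2\pi/2}$ is, by Definition \ref{ST1}, exactly $\mathcal{V}_{\psi_j}\varphi_j(x,\omega)$, which yields \eqref{sum1}.

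Since the whole argument is a single chain of equalities once the definitions are unfolded, there is essentially no analytic obstacle, and no convergence question arises because the index $j$ ranges over the finite set $\{0,\dots,n\}$. The only point requiring a moment of care is the noncommutativity of $\mathbb{H}$: throughout the computation one must keep the phase $e^{-I\pi x\omega}$ on the left and the Gaussian damping $e^{-|q|^2\pi/2}$ on the right, precisely as in Definition \ref{ST1}, and exploit that the latter factor is real so that its distribution across the sum is unambiguous.
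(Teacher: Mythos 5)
Your proposal is correct and follows exactly the paper's own argument: unfold Definition \ref{vecba} inside \eqref{QST2}, distribute the left phase factor and the real right factor over the finite sum, and recognize each summand as \eqref{QST1}. The extra remark on noncommutativity is a sensible precaution but does not change the route.
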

\begin{proof}
From \eqref{QST2} and \eqref{bar3} turns out that
\begin{eqnarray*}
\mathbb{V}_{\vec{\psi}}\vec{\varphi}(x, \omega) &=&e^{-I \pi x \omega} \mathfrak{B}(\vec{\varphi}) \left( \frac{\bar{q}}{\sqrt{2}} \right) e^{- \frac{|q|^2 \pi}{2}}\\
&=& e^{-I \pi x \omega} \left( \sum_{j=0}^n B^{j+1} \varphi_j \left( \frac{\bar{q}}{\sqrt{2}} \right) \right)e^{- \frac{|q|^2 \pi}{2}}\\
&=&\sum_{j=0}^n e^{-I \pi x \omega}   B^{j+1} \varphi_j \left( \frac{\bar{q}}{\sqrt{2}} \right) e^{- \frac{|q|^2 \pi}{2}}.
\end{eqnarray*}
Thus, by \eqref{QST1} we have
$$\mathbb{V}_{\vec{\psi}} \vec{\varphi}(x, \omega)= \sum_{j=0}^n \mathcal{V}_{\psi_j} \varphi_j(x, \omega).$$
\end{proof}
\subsection{Moyal formulas}
Here, we show that a Moyal formula and an isometric relation hold both for the true-poly QSTFT and the full-poly one.
\begin{thm}
\label{Moy1}
Let $ \varphi, \phi$ be functions in $L^{2}(\mathbb{R}, \mathbb{H})$. Then we have
\begin{equation}
\label{iso1}
\langle \mathcal{V}_{\psi_n} \varphi, \mathcal{V}_{\psi_n} \phi \rangle_{L^2(\mathbb{R}^2, \mathbb{H})} =2 \langle\varphi,  \phi \rangle_{L^2( \mathbb{R}, \mathbb{H})}.
\end{equation}
\end{thm}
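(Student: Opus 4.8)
The plan is to reduce the inner product of the two transforms over $L^2(\mathbb{R}^2,\mathbb{H})$ to the inner product of the corresponding true polyanalytic Bargmann transforms over $\mathcal{F}^n_T(\mathbb{H})$, and then invoke Corollary \ref{ST0}. First I would write out the definition $\langle \mathcal{V}_{\psi_n}\varphi, \mathcal{V}_{\psi_n}\phi\rangle_{L^2(\mathbb{R}^2,\mathbb{H})} = \int_{\mathbb{R}^2} \overline{\mathcal{V}_{\psi_n}\phi(x,\omega)}\,\mathcal{V}_{\psi_n}\varphi(x,\omega)\,dx\,d\omega$ and substitute formula \eqref{QST1}. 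Using that the Gaussian factor $e^{-|q|^2\pi/2}$ is real and that $\overline{AB}=\bar B\,\bar A$, the conjugate of $\mathcal{V}_{\psi_n}\phi$ produces the modulation $e^{I\pi x\omega}$, which sits immediately to the left of the factor $e^{-I\pi x\omega}$ coming from $\mathcal{V}_{\psi_n}\varphi$. Since the two exponentials are adjacent and lie on the same slice $\mathbb{C}_I$, they commute and cancel, so the integrand collapses to $e^{-\pi|q|^2}\,\overline{B^{n+1}(\phi)(\bar q/\sqrt 2)}\,B^{n+1}(\varphi)(\bar q/\sqrt 2)$ with $q=x+I\omega$.

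The key step is then the change of variables $w=\bar q/\sqrt 2=(x-I\omega)/\sqrt 2$. Writing $w$ in its real and imaginary components relative to $I$, the map $(x,\omega)\mapsto w$ has constant Jacobian of modulus $1/2$, so $dx\,d\omega = 2\,d\lambda_I(w)$, and moreover $|q|^2=2|w|^2$ gives $e^{-\pi|q|^2}=e^{-2\pi|w|^2}$. After this substitution the integral becomes exactly $2\int_{\mathbb{C}_I}\overline{B^{n+1}(\phi)(w)}\,B^{n+1}(\varphi)(w)\,e^{-2\pi|w|^2}\,d\lambda_I(w) = 2\,\langle B^{n+1}(\varphi), B^{n+1}(\phi)\rangle_{\mathcal{F}^n_T(\mathbb{H})}$, which is the defining inner product of the true polyanalytic Fock space.

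Finally I would apply Corollary \ref{ST0}, which states $\langle B^{n+1}(\varphi), B^{n+1}(\phi)\rangle_{\mathcal{F}^n_T(\mathbb{H})} = \langle \varphi, \phi\rangle_{L^2(\mathbb{R},\mathbb{H})}$, to conclude that $\langle \mathcal{V}_{\psi_n}\varphi, \mathcal{V}_{\psi_n}\phi\rangle_{L^2(\mathbb{R}^2,\mathbb{H})} = 2\,\langle\varphi,\phi\rangle_{L^2(\mathbb{R},\mathbb{H})}$, as desired.

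The two genuinely delicate points will be, first, the non-commutative bookkeeping when cancelling the modulation factors: one must keep the two exponentials adjacent so that the cancellation is licit without assuming that quaternions commute. Second, one must track the factor of $2$ carefully, since it enters through the Jacobian $dx\,d\omega = 2\,d\lambda_I(w)$ and is precisely the source of the constant appearing on the right-hand side of \eqref{iso1}. The remaining manipulations are routine.
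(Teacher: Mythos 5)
Your proposal is correct and follows essentially the same route as the paper: substitute the definition \eqref{QST1}, cancel the adjacent modulation factors $e^{I\pi x\omega}e^{-I\pi x\omega}$ using that the real Gaussian commutes, perform the substitution $p=\bar q/\sqrt2$ (which produces the factor $2$ and converts the weight to $e^{-2\pi|p|^2}$), and conclude by Corollary \ref{ST0}. The two ``delicate points'' you flag — the non-commutative order of factors and the Jacobian — are exactly the ones the paper's computation handles, so nothing further is needed.
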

\begin{proof}
By Definition \ref{ST1} we have
\begin{eqnarray*}
\langle \mathcal{V}_{\psi_n}\varphi , \mathcal{V}_{\psi_n} \phi \rangle_{L^2(\mathbb{R}^2, \mathbb{H})} &=& \int_{\mathbb{R}^2} \overline{\mathcal{V}_{\psi_n} \phi (x, \omega)} \mathcal{V}_{\psi_n} \varphi(x, \omega)  d xd \omega\\
&=& \int_{\mathbb{R}^2} \overline{e^{-I \pi x \omega} B^{n+1}(\phi) \left( \frac{\bar{q}}{\sqrt{2}} \right) e^{- \frac{|q|^2 \pi}{2}}} \cdot\\ 
&& \cdot e^{-I \pi x \omega} B^{n+1}(\varphi) \left( \frac{\bar{q}}{\sqrt{2}} \right) e^{- \frac{|q|^2 \pi}{2}}  dxd \omega\\
&=& \! \!\int_{\mathbb{R}^2} \overline{B^{n+1}(\phi) \left( \frac{\bar{q}}{\sqrt{2}} \right)}e^{I \pi x \omega}  e^{-I \pi x \omega} B^{n+1}(\varphi) \left( \frac{\bar{q}}{\sqrt{2}} \right) \\
&& \cdot e^{- |q|^2 \pi}  dxd \omega\\
&=& \! \!\int_{\mathbb{R}^2} \overline{B^{n+1}(\phi) \left( \frac{\bar{q}}{\sqrt{2}} \right)} B^{n+1}(\varphi) \left( \frac{\bar{q}}{\sqrt{2}} \right) e^{- |q|^2 \pi} dx d \omega
\end{eqnarray*}
We put $ p= \frac{\bar{q}}{\sqrt{2}}$ and by Corollary \ref{ST0} we get
\begin{eqnarray*}
\langle \mathcal{V}_{\psi_n} \varphi, \mathcal{V}_{\psi_n} \phi \rangle_{L^2(\mathbb{R}^2, \mathbb{H})} &=&  2\int_{\mathbb{R}^2} \overline{B^{n+1} \left(\phi(p)\right) } B^{n+1}\left(\varphi(p)\right) e^{- 2|p|^2 \pi} dp\\
&=&2  \langle B^{n+1} \varphi, B^{n+1} \phi \rangle_{\mathcal{F}^n_T( \mathbb{H})}.\\
&=& 2 \langle \varphi,\phi \rangle_{L^2( \mathbb{R}, \mathbb{H})}.
\end{eqnarray*}
\end{proof}
\begin{cor}
\label{iso2}
For any $ \sigma \in L^2( \mathbb{R}, \mathbb{H})$
$$ \| \mathcal{V}_{\psi_n} \sigma \|_{L^2( \mathbb{R}^2, \mathbb{H})}= \sqrt{2} \| \sigma \|_{L^2( \mathbb{R}, \mathbb{H})}.$$
\end{cor}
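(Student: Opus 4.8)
The plan is to obtain this identity as an immediate specialization of the Moyal formula established in Theorem \ref{Moy1}. First I would set $\varphi = \phi = \sigma$ in the polarized identity \eqref{iso1}. The left-hand side then collapses to the squared norm $\langle \mathcal{V}_{\psi_n}\sigma, \mathcal{V}_{\psi_n}\sigma \rangle_{L^2(\mathbb{R}^2, \mathbb{H})} = \|\mathcal{V}_{\psi_n}\sigma\|_{L^2(\mathbb{R}^2, \mathbb{H})}^2$, while the right-hand side becomes $2\langle \sigma, \sigma \rangle_{L^2(\mathbb{R}, \mathbb{H})} = 2\|\sigma\|_{L^2(\mathbb{R}, \mathbb{H})}^2$. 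Taking positive square roots on both sides yields the claimed relation with the constant $\sqrt{2}$.

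There is essentially no obstacle to overcome here, since the entire analytic content already resides in Theorem \ref{Moy1}, which in turn rests on the isometry of the true quaternionic polyanalytic Bargmann transform (Corollary \ref{ST0}) together with the substitution $p = \bar{q}/\sqrt{2}$, whose Jacobian accounts for the factor $2$. The only point that deserves a moment of care in the noncommutative setting is the legitimacy of extracting the square root: by the definition \eqref{spR1} of the inner product, the diagonal value $\langle \sigma, \sigma \rangle_{L^2(\mathbb{R}, \mathbb{H})} = \int_{\mathbb{R}} |\sigma(x)|^2\, dx$ is a genuinely nonnegative real number, and likewise for $\langle \mathcal{V}_{\psi_n}\sigma, \mathcal{V}_{\psi_n}\sigma \rangle_{L^2(\mathbb{R}^2, \mathbb{H})}$, so that both sides of the specialized identity are nonnegative reals and the passage to norms introduces no sign ambiguity. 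This completes the argument.
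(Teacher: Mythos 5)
Your proposal is correct and coincides with the paper's own argument, which likewise obtains the corollary by setting $\phi=\varphi:=\sigma$ in Theorem \ref{Moy1} and taking square roots. The extra remark about the diagonal inner products being nonnegative reals is a harmless (and valid) elaboration of a step the paper treats as immediate.
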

\begin{proof}
It follows trivially by Theorem \ref{Moy1} putting $ \phi= \varphi:= \sigma.$
\end{proof}
We prove an isometry property for the full-poly QSTFT.
\begin{thm}
\label{iso5}
Let $ \vec{\varphi}=(\varphi_0,..., \varphi_n)$ be a vector valued function in $L^2( \mathbb{R}, \mathbb{H}^{n+1})$, then
$$ \| \mathbb{V}_{\vec{\psi}} \vec{\varphi} \|_{L^2( \mathbb{R}^2, \mathbb{H})}= \sqrt{2} \| \vec{\varphi} \|_{L^2( \mathbb{R}, \mathbb{H}^{n+1})}.$$
\end{thm}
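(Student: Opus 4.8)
The plan is to mirror the computation carried out in the proof of Theorem \ref{Moy1}, but to run it at the level of the full-polyanalytic transform $\mathfrak{B}$ rather than component by component. First I would start from the definition \eqref{QST2}, observing that the unimodular factor $e^{-I\pi x\omega}$ does not affect the modulus, so that
\begin{equation*}
\left| \mathbb{V}_{\vec{\psi}} \vec{\varphi}(x,\omega) \right|^2 = \left| \mathfrak{B}(\vec{\varphi}) \left( \frac{\bar q}{\sqrt 2} \right) \right|^2 e^{-\pi |q|^2}, \qquad q = x + I\omega.
\end{equation*}
Integrating over $\mathbb{R}^2$ and performing the change of variable $p = \bar q/\sqrt{2}$, whose Jacobian produces exactly the factor $2$ already encountered in Theorem \ref{Moy1} (here $|q|^2 = 2|p|^2$ and $d\lambda_I(q) = 2\, d\lambda_I(p)$), would turn the left-hand side into $2\int_{\mathbb{C}_I} |\mathfrak{B}(\vec{\varphi})(p)|^2 e^{-2\pi|p|^2}\, d\lambda_I(p)$, that is, $2\,\|\mathfrak{B}(\vec{\varphi})\|^2_{\widetilde{\mathcal{F}}^{n+1}_{Slice}(\mathbb{H})}$.

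Then I would invoke the isometry of the full-polyanalytic Bargmann transform established in Theorem \ref{cor2}, namely $\|\mathfrak{B}(\vec{\varphi})\|_{\widetilde{\mathcal{F}}^{n+1}_{Slice}(\mathbb{H})} = \|\vec{\varphi}\|_{L^2(\mathbb{R},\mathbb{H}^{n+1})}$, to conclude that $\|\mathbb{V}_{\vec{\psi}}\vec{\varphi}\|^2_{L^2(\mathbb{R}^2,\mathbb{H})} = 2\,\|\vec{\varphi}\|^2_{L^2(\mathbb{R},\mathbb{H}^{n+1})}$, and then take square roots.

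Alternatively, I could argue directly from the additive relation \eqref{sum1}. Expanding $\|\sum_{j=0}^n \mathcal{V}_{\psi_j}\varphi_j\|^2$ produces diagonal terms, each equal to $2\|\varphi_j\|^2$ by Corollary \ref{iso2}, together with off-diagonal terms $\langle \mathcal{V}_{\psi_j}\varphi_j, \mathcal{V}_{\psi_m}\varphi_m\rangle$ with $j\neq m$. Running the same Moyal computation as in Theorem \ref{Moy1} rewrites each such cross term as $2\langle B^{j+1}\varphi_j, B^{m+1}\varphi_m\rangle_{\widetilde{\mathcal{F}}^{n+1}_{Slice}(\mathbb{H})}$; since $B^{j+1}\varphi_j \in \mathcal{F}^j_T(\mathbb{H})$ and $B^{m+1}\varphi_m \in \mathcal{F}^m_T(\mathbb{H})$ lie in distinct true polyanalytic Fock spaces, Lemma \ref{today} forces them to vanish. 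Summing the diagonal contributions then yields $2\sum_{j=0}^n\|\varphi_j\|^2 = 2\|\vec{\varphi}\|^2_{L^2(\mathbb{R},\mathbb{H}^{n+1})}$, the same conclusion.

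The computation has no genuine obstacle: the factor of $2$ and the change of variables are identical to those already validated in Theorem \ref{Moy1}, and the isometry of $\mathfrak{B}$ is in hand. The only point demanding mild care, should I take the second route, is the vanishing of the mixed terms, which is precisely where the orthogonality of distinct true polyanalytic Fock spaces (Lemma \ref{today}) enters; in the first route this orthogonality has already been absorbed once and for all inside Theorem \ref{cor2}, which is why I would favour the direct approach through $\mathfrak{B}$.
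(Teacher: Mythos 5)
Your proposal is correct, and your preferred (first) route is a genuine, if mild, repackaging of the paper's argument. The paper proves the theorem by the component decomposition: it invokes \eqref{sum1}, asserts the Pythagorean identity $\|\mathbb{V}_{\vec{\psi}}\vec{\varphi}\|^2_{L^2(\mathbb{R}^2,\mathbb{H})}=\sum_{j=0}^n\|\mathcal{V}_{\psi_j}\varphi_j\|^2_{L^2(\mathbb{R}^2,\mathbb{H})}$, and then applies Corollary \ref{iso2} term by term --- in other words, exactly your second alternative, except that the paper does not spell out why the cross terms $\langle \mathcal{V}_{\psi_j}\varphi_j,\mathcal{V}_{\psi_m}\varphi_m\rangle$ with $j\neq m$ vanish; your reduction of these to $2\langle B^{j+1}\varphi_j,B^{m+1}\varphi_m\rangle_{\widetilde{\mathcal{F}}^{n+1}_{Slice}(\mathbb{H})}$ followed by Lemma \ref{today} supplies precisely the justification the paper leaves implicit, so on that point your write-up is actually more complete. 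Your first route --- taking moduli in \eqref{QST2}, changing variables $p=\bar q/\sqrt 2$ to pick up the factor $2$, and then quoting the isometry of $\mathfrak{B}$ from Theorem \ref{cor2} --- is shorter and cleaner because, as you observe, the orthogonality of the distinct true polyanalytic Fock spaces has already been consumed once inside the proof of Theorem \ref{cor2}; what it costs is nothing beyond the (already verified) Jacobian bookkeeping. Either version is acceptable; the first has the advantage of not restating an argument the paper has already made, the second has the advantage of making the vanishing of the mixed terms explicit where the paper merely asserts it.
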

\begin{proof}
First of all we note that
$$ \|\mathbb{V}_{\vec{\psi}} \vec{\varphi} \|_{L^2( \mathbb{R}^2, \mathbb{H})}^2= \sum_{j=0}^n \| \mathcal{V}_{\psi_j} \varphi_j \|^2_{L^2(\mathbb{R}^2, \mathbb{H})},$$
where $ \mathcal{V}_{\psi_j}$ are the true-poly QSTFTs. Therefore by Corollary \ref{iso2} we have
\begin{eqnarray*}
\|\mathbb{V}_{\vec{\psi}_j} \vec{\varphi} \|_{L^2( \mathbb{R}^2, \mathbb{H})}^2 &=&\| \mathcal{V}_{\psi_0} \varphi_0 \|^2_{L^2(\mathbb{R}^2, \mathbb{H})}+...+\| \mathcal{V}_{\psi_n} \varphi_n \|^2_{L^2(\mathbb{R}^2, \mathbb{H})}\\
&=& 2 \|  \varphi_0 \|^2_{L^2(\mathbb{R}, \mathbb{H})}+...+ 2 \|  \varphi_n \|^2_{L^2(\mathbb{R}, \mathbb{H})}\\
&=& 2 \sum_{j=0}^n \| \varphi_j \|^2_{L^2(\mathbb{R}, \mathbb{H})}=2\| \vec{\varphi} \|_{L^2( \mathbb{R}, \mathbb{H}^{n+1})}^2.
\end{eqnarray*}
\end{proof}
Now, we prove a Moyal formula for the full-poly QSTFT. In order to do this we need the following polarization identity (see \cite[Formula 2.4]{GMP}) for $u, v \in \mathbb{H}$
$$ \langle u, v \rangle= \frac{1}{4} \left( \| u+v \|^2- \|u-v\|^2 \right)+ \frac{1}{4} \sum_{\boldsymbol{\tau}=\boldsymbol{i},\boldsymbol{j},\boldsymbol{k}}\left( \| u\boldsymbol{\tau}+v \|^2- \|u\boldsymbol{\tau}-v\|^2 \right)\boldsymbol{\tau}.$$
\begin{prop}
Let $ \vec{\varphi}=( \varphi_0,..., \varphi_n)$ and  $ \vec{\phi}=(\phi_0,..., \phi_n)$ be vector valued functions in $L^{2}( \mathbb{R}, \mathbb{H}^{n+1})$. Then, we have
$$ \langle \mathbb{V}_{\vec{\psi}} \vec{\phi}, \mathbb{V}_{\vec{\psi}} \vec{\varphi} \rangle_{{L^{2}(\mathbb{R}^2, \mathbb{H})}}=2 \langle \vec{\phi}, \vec{\varphi} \rangle_{L^{2}(\mathbb{R}, \mathbb{H}^{n+1})}.$$
\end{prop}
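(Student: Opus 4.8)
The plan is to upgrade the isometry of Theorem \ref{iso5} to a full inner-product identity by means of the quaternionic polarization identity recalled just above the statement. Writing $u:=\mathbb{V}_{\vec{\psi}}\vec{\phi}$ and $v:=\mathbb{V}_{\vec{\psi}}\vec{\varphi}$, the polarization identity expresses $\langle u,v\rangle_{L^2(\mathbb{R}^2,\mathbb{H})}$ as a fixed combination of the squared norms $\|u\pm v\|^2$ and $\|u\boldsymbol{\tau}\pm v\|^2$ for $\boldsymbol{\tau}=\boldsymbol{i},\boldsymbol{j},\boldsymbol{k}$. The idea is to recognise each argument appearing there as $\mathbb{V}_{\vec{\psi}}$ applied to the corresponding combination of $\vec{\phi}$ and $\vec{\varphi}$, and then to apply Theorem \ref{iso5} to each squared norm.

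The key preliminary step is therefore to check that $\mathbb{V}_{\vec{\psi}}$ is additive and right $\mathbb{H}$-linear, namely
$$\mathbb{V}_{\vec{\psi}}(\vec{\phi}+\vec{\varphi})=\mathbb{V}_{\vec{\psi}}\vec{\phi}+\mathbb{V}_{\vec{\psi}}\vec{\varphi},\qquad \mathbb{V}_{\vec{\psi}}(\vec{\phi}\boldsymbol{\tau})=(\mathbb{V}_{\vec{\psi}}\vec{\phi})\boldsymbol{\tau},\quad \boldsymbol{\tau}\in\mathbb{H}.$$
Additivity is immediate from \eqref{QST2} and \eqref{bar3}, since $\mathfrak{B}$ is built from the integral transforms $B^{j+1}$. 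Right-linearity is the only point requiring care: in \eqref{QST2} the phase factor $e^{-I\pi x\omega}$ sits on the \emph{left}, but since the Segal--Bargmann-type integrals place the function to the right of the kernel and the Gaussian $e^{-|q|^2\pi/2}$ is real-valued (hence central), a constant $\boldsymbol{\tau}$ pulled out of $\vec{\phi}$ commutes past the Gaussian and emerges on the far right, giving $(\mathbb{V}_{\vec{\psi}}\vec{\phi})\boldsymbol{\tau}$.

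Granting these two properties, every argument in the polarization identity has the form $\mathbb{V}_{\vec{\psi}}(\vec{\phi}\boldsymbol{\tau}\pm\vec{\varphi})$, so Theorem \ref{iso5} converts each squared norm on the image side into $2$ times the corresponding squared norm of the preimage:
$$\|\mathbb{V}_{\vec{\psi}}\vec{\phi}\,\boldsymbol{\tau}\pm\mathbb{V}_{\vec{\psi}}\vec{\varphi}\|_{L^2(\mathbb{R}^2,\mathbb{H})}^2=\|\mathbb{V}_{\vec{\psi}}(\vec{\phi}\boldsymbol{\tau}\pm\vec{\varphi})\|_{L^2(\mathbb{R}^2,\mathbb{H})}^2=2\|\vec{\phi}\boldsymbol{\tau}\pm\vec{\varphi}\|_{L^2(\mathbb{R},\mathbb{H}^{n+1})}^2.$$
Substituting these into the polarization identity and factoring out the common $2$, the right-hand side collapses, by the same polarization identity now applied to $\vec{\phi}$ and $\vec{\varphi}$ in $L^2(\mathbb{R},\mathbb{H}^{n+1})$, to exactly $2\langle\vec{\phi},\vec{\varphi}\rangle_{L^2(\mathbb{R},\mathbb{H}^{n+1})}$, which is the claim.

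Alternatively, one can avoid polarization and argue directly as in Theorem \ref{Moy1}: expanding both entries via \eqref{sum1} produces a double sum $\sum_{j,m}\langle\mathcal{V}_{\psi_j}\phi_j,\mathcal{V}_{\psi_m}\varphi_m\rangle$; after cancelling the phases and performing the change of variables $p=\bar{q}/\sqrt{2}$, each term reduces to $2\langle B^{j+1}\phi_j,B^{m+1}\varphi_m\rangle_{\widetilde{\mathcal{F}}^{n+1}_{Slice}(\mathbb{H})}$, the off-diagonal terms vanish by the orthogonality of the true spaces (Lemma \ref{today}), and the diagonal terms are evaluated by Corollary \ref{ST0}. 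In either route I expect the main obstacle to be purely organisational rather than conceptual: in the polarization route, the careful verification of right $\mathbb{H}$-linearity past the non-commuting left phase factor; in the direct route, tracking the quaternionic order of factors so that the phases cancel correctly before the change of variables.
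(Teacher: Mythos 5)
Your primary argument (polarization identity applied to $u=\mathbb{V}_{\vec{\psi}}\vec{\phi}$, $v=\mathbb{V}_{\vec{\psi}}\vec{\varphi}$, linearity of the full-poly QSTFT to absorb the combinations, Theorem \ref{iso5} on each squared norm, then polarization again on the preimages) is exactly the paper's proof, and your explicit check of right $\mathbb{H}$-linearity past the left phase factor fills in a point the paper only asserts. The alternative direct route you sketch via Lemma \ref{today} and Corollary \ref{ST0} would also work, but it is not needed; the proposal is correct as it stands.
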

\begin{proof}
By the polarization identity with $u:=\mathbb{V}_{\vec{\psi}} \vec{\phi}$ and $v:=\mathbb{V}_{\vec{\psi}} \vec{\varphi}$ and the linearity of the full-poly QSTFT, which comes from the linearity of the quaternionic full-polyanalytic Bargmann, we get
\begin{eqnarray*}
\langle \mathbb{V}_{\vec{\psi}} \vec{\phi}, \mathbb{V}_{\vec{\psi}} \vec{\varphi} \rangle_{{L^{2}(\mathbb{R}^2, \mathbb{H})}}&=& \frac{1}{4} \left( \| \mathbb{V}_{\vec{\psi}} \vec{\phi}+\mathbb{V}_{\vec{\psi}} \vec{\varphi}  \|^2- \|\mathbb{V}_{\vec{\psi}} \vec{\phi}-\mathbb{V}_{\vec{\psi}} \vec{\varphi}\|^2 \right)+ \\
&& +\frac{1}{4} \sum_{\boldsymbol{\tau}=\boldsymbol{i},\boldsymbol{j},\boldsymbol{k}}\left( \| \mathbb{V}_{\vec{\psi}} \vec{\phi} \cdot \boldsymbol{\tau}+\mathbb{V}_{\vec{\psi}} \vec{\varphi}  \|^2- \|\mathbb{V}_{\vec{\psi}} \vec{\phi} \cdot \boldsymbol{\tau}-\mathbb{V}_{\vec{\psi}} \vec{\varphi} \|^2 \right)\boldsymbol{\tau}\\
&=& \frac{1}{4} \left( \| \mathbb{V}_{\vec{\psi}} (\vec{\phi}+\vec{\varphi})  \|^2- \|\mathbb{V}_{\vec{\psi}} (\vec{\phi}-\vec{\varphi})\|^2 \right)+ \\
&& +\frac{1}{4} \sum_{\boldsymbol{\tau}=\boldsymbol{i},\boldsymbol{j},\boldsymbol{k}}\left( \| \mathbb{V}_{\vec{\psi}} (\vec{\phi} \cdot \boldsymbol{\tau}+\vec{\varphi} ) \|^2- \|\mathbb{V}_{\vec{\psi}} (\vec{\phi} \cdot \boldsymbol{\tau}- \vec{\varphi} )\|^2 \right)\boldsymbol{\tau}.
\end{eqnarray*}
Since the space $L^2(\mathbb{R}, \mathbb{H}^{n+1})$ is a right vector quaternionic space we have that $ \vec{\phi} \pm \vec{\varphi}$ and $\vec{\phi}  \boldsymbol{\tau}\pm\vec{\varphi}$ stay in $L^{2}(\mathbb{R}, \mathbb{H}^{n+1})$. Therefore by Theorem \ref{iso5} we have
\begin{eqnarray}
\label{iso4}
\nonumber
\langle \mathbb{V}_{\vec{\psi}} \vec{\phi}, \mathbb{V}_{\vec{\psi}} \vec{\varphi} \rangle_{{L^{2}(\mathbb{R}^2, \mathbb{H})}}&=& \frac{1}{4}2 \left( \| \vec{\phi}+\vec{\varphi} \|^2- \|\vec{\phi}-\vec{\varphi}\|^2 \right)+ \\ 
&& +\frac{1}{4} 2\sum_{\boldsymbol{\tau}=\boldsymbol{i},\boldsymbol{j},\boldsymbol{k}}\left( \| \vec{\phi} \cdot \boldsymbol{\tau}+\vec{\varphi} \|^2- \|\vec{\phi} \cdot \boldsymbol{\tau}- \vec{\varphi} \|^2 \right)\boldsymbol{\tau}.
\end{eqnarray}
Using another time  the polarization identity with $ u:= \vec{\phi}$ and $v:= \vec{\varphi}$ in \eqref{iso4} we obtain
$$ \langle \mathbb{V}_{\vec{\psi}} \vec{\phi}, \mathbb{V}_{\vec{\psi}} \vec{\varphi} \rangle_{{L^{2}(\mathbb{R}^2, \mathbb{H})}}=2 \langle \vec{\phi}, \vec{\varphi} \rangle_{L^{2}(\mathbb{R}, \mathbb{H}^{n+1})}.$$
\end{proof}	
	
\subsection{Reconstruction formula}
It is possible to recover the value of the signal if we know its true-poly QSTFT. In order to show a reconstruction formula we recall the following formula for the true quaternionic polyanalytic Bargmann (see Section 5). Let $ \varphi \in L^2(\mathbb{R},\mathbb{H})$, thus we have
\begin{equation}
\label{Bar1}
(B^{n+1} \varphi)(q)= 2^{\frac{3}{4}}(2^n n! (2 \pi)^n)^{- \frac{1}{2}} \int_{\mathbb{\mathbb{R}}} e^{- \pi(q^2+t^2)+2 \pi \sqrt{2} q t} H_n \left( \frac{q+ \bar{q}}{ \sqrt{2}}-t \right) \varphi(t) dt,
\end{equation}
where $H_n$ are the weighted Hermite polynomials, see \eqref{herm2}. 
\\We have the following relation between the weighted Hermite functions and the weighted Hermite polynomials
\begin{equation}
\label{Bar2}
h_n^{2 \pi}(u)=H_n(u) e^{- \pi u^2}.
\end{equation}
Before to prove the reconstruction formula we need the following auxiliary result.
\begin{lem}
\label{Bar5}
Let $ \varphi$ be a function in $L^2(\mathbb{R}, \mathbb{H})$ and $ \psi_n(t)=\frac{h_n^{2 \pi}(t)}{\|h_n^{2 \pi} \|_{L^2(\mathbb{R}, \mathbb{H})}}$. Then we have
\begin{equation}
\label{Bar3}
\mathcal{V}_{\psi_n} \varphi(x, \omega)= \sqrt{2} \int_{\mathbb{R}} e^{-2 \pi I \omega t} \psi_n(x-t) \varphi(t) dt.
\end{equation}
\end{lem}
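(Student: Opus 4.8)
The plan is to start from Definition \ref{ST1} and substitute the closed integral formula \eqref{Bar1} for $B^{n+1}\varphi$ evaluated at the point $p:=\frac{\bar q}{\sqrt 2}$, and then to collapse all the Gaussian and oscillatory factors into a single exponential. First I would record the two elementary identities that make the evaluation at $p$ transparent: writing $q=x+I\omega$ we have $\bar q=x-I\omega$, so that $p=\frac{x-I\omega}{\sqrt 2}$ and $\bar p=\frac{x+I\omega}{\sqrt 2}$, whence $\frac{p+\bar p}{\sqrt 2}=x$. Consequently the Hermite argument in \eqref{Bar1} becomes simply $H_n\left(\frac{p+\bar p}{\sqrt 2}-t\right)=H_n(x-t)$, which is real valued.

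The core computation is the amalgamation of the three exponential factors appearing in \eqref{QST1}. Substituting $p^2=\frac12(x^2-2Ix\omega-\omega^2)$ and $2\pi\sqrt 2\,pt=2\pi(x-I\omega)t$, and recalling that $|q|^2=x^2+\omega^2$, I would collect the total exponent
\[
-I\pi x\omega-\pi(p^2+t^2)+2\pi\sqrt 2\,pt-\frac{\pi|q|^2}{2}.
\]
The terms in $Ix\omega$ cancel (the $+\pi Ix\omega$ produced by $-\pi p^2$ kills the prefactor $-I\pi x\omega$), the $\omega^2$ contributions cancel between $-\pi p^2$ and $-\frac{\pi}{2}|q|^2$, and what survives is exactly $-\pi(x-t)^2-2\pi I\omega t$. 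Here it is important to observe that every one of these exponential factors lies in the slice $\mathbb{C}_I$ (or is real), so they all commute with one another, and the real polynomial $H_n(x-t)$ commutes as well; only $\varphi(t)$ is a genuine quaternion and it must be kept on the right throughout, exactly as in \eqref{Bar1}.

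After this simplification the transform reads
\[
\mathcal{V}_{\psi_n}\varphi(x,\omega)=2^{\frac34}\left(2^n n!(2\pi)^n\right)^{-\frac12}\int_{\mathbb{R}}e^{-2\pi I\omega t}\,e^{-\pi(x-t)^2}H_n(x-t)\,\varphi(t)\,dt.
\]
Then I would invoke \eqref{Bar2} with $u=x-t$ to replace $e^{-\pi(x-t)^2}H_n(x-t)$ by $h_n^{2\pi}(x-t)$. The final step is purely a matter of constants: using the norm formula \eqref{normhn} with $\nu=2\pi$, namely $\|h_n^{2\pi}\|_{L^2(\mathbb{R},\mathbb{H})}^2=2^n(2\pi)^n n!\,2^{-1/2}$, one checks that $2^{\frac34}\left(2^n n!(2\pi)^n\right)^{-\frac12}=\frac{\sqrt 2}{\|h_n^{2\pi}\|_{L^2(\mathbb{R},\mathbb{H})}}$, so that the scalar prefactor together with $h_n^{2\pi}(x-t)=\|h_n^{2\pi}\|\,\psi_n(x-t)$ yields precisely the claimed $\sqrt 2\int_{\mathbb{R}}e^{-2\pi I\omega t}\psi_n(x-t)\varphi(t)\,dt$.

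The only delicate point is the exponent bookkeeping together with the non-commutativity: one must be sure that the cancellation of the $I$-terms is legitimate, which rests on the fact that $x,\omega,t$ are real scalars commuting with the fixed imaginary unit $I$ and that all the relevant exponentials live in the single slice $\mathbb{C}_I$. Apart from that the argument is a direct substitution followed by a constant-matching.
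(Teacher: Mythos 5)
Your proposal is correct and follows essentially the same route as the paper's own proof: substitute the closed integral formula \eqref{Bar1} at the point $\bar q/\sqrt 2$ into Definition \ref{ST1}, observe that the Hermite argument reduces to $x-t$, collapse the exponentials (all lying in the single slice $\mathbb{C}_I$) to $e^{-\pi(x-t)^2-2\pi I\omega t}$, apply \eqref{Bar2}, and match the constants via the norm of $h_n^{2\pi}$. Your exponent bookkeeping and the constant check $2^{3/4}(2^n n!(2\pi)^n)^{-1/2}=\sqrt 2/\|h_n^{2\pi}\|_{L^2(\mathbb{R},\mathbb{H})}$ both agree with the paper's computation.
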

\begin{proof}
From \eqref{QST1} and \eqref{Bar1} we get
\begin{eqnarray*}
\mathcal{V}_{\psi_n} \varphi(x, \omega) \! \! \!&=&\! \! \! \!  e^{-I \pi x \omega} \left( B^{n+1} \varphi  \right) \left( \frac{\bar{q}}{\sqrt{2}} \right) e^{- \frac{|q|^2 \pi}{2}}\\
&=&\! \! \! \! 2^{\frac{3}{4}}(2^n n! (2 \pi)^n)^{- \frac{1}{2}} e^{-I \pi x \omega} \! \! \! \int_{\mathbb{\mathbb{R}}} e^{- \pi\left( \frac{\bar{q}^2}{2}+t^2\right)+2 \pi \bar{q} t}   \\
&& \! \! \! \! \cdot H_n \left( \frac{\frac{\bar{q}}{\sqrt{2}}+ \frac{q}{\sqrt{2}}}{ \sqrt{2}}-t \right) \varphi(t) \, dt \, \, e^{- \frac{|q|^2 \pi}{2}}.\\
\end{eqnarray*}
Now, since $q=x+ I \omega$ we get
\begin{eqnarray*}
\mathcal{V}_{\psi_n} \varphi(x, \omega) \! \! \! &=& \! \! \!2^{\frac{3}{4}}(2^n n! (2 \pi)^n)^{- \frac{1}{2}} e^{-I \pi x \omega} e^{- \frac{ \pi x^2}{2}} e^{I \pi x \omega} e^{\frac{\omega^2 \pi}{2}}e^{- \frac{ \pi x^2}{2}} e^{-\frac{\omega^2 \pi}{2}} \cdot \\
&& \! \! \! \cdot \int_{\mathbb{\mathbb{R}}} e^{- \pi t^2+2 \pi x t -2 \pi I \omega t} H_n \left(x-t \right) \varphi(t)dt \\
&=& \! \! \! 2^{\frac{3}{4}}(2^n n! (2 \pi)^n)^{- \frac{1}{2}} \int_{\mathbb{R}} e^{-2 \pi I \omega t} e^{- \pi t^2+2 \pi xt -\pi x^2 } H_n(x-t) \varphi(t) dt\\
&=& \! \! \! 2^{\frac{3}{4}}(2^n n! (2 \pi)^n)^{- \frac{1}{2}} \frac{\|h_n^{2 \pi} \|_{L^2(\mathbb{R}, \mathbb{H})}}{\|h_n^{2 \pi} \|_{L^2(\mathbb{R}, \mathbb{H})}} \int_{\mathbb{R}}e^{-2 \pi I \omega t} e^{- \pi(x-t)^2} H_n(x-t) \varphi(t) dt.\\
\end{eqnarray*}
Hence by \eqref{Bar2} we get
\begin{eqnarray*}
\mathcal{V}_{\psi_n} \varphi(x, \omega)&=& 2^{\frac{3}{4}}(2^n n! (2 \pi)^n)^{- \frac{1}{2}} (2^n (2 \pi)^n n! 2^{- \frac{1}{2}})^{\frac{1}{2}} \int_{\mathbb{R}}e^{-2 \pi I \omega t} \frac{h_n^{2 \pi}(x-t)}{ \| h_n^{2 \pi} \|_{L^2( \mathbb{R}, \mathbb{H})}} \varphi(t) dt\\
&=& \sqrt{2} \int_{\mathbb{R}} e^{- 2 \pi I \omega t} \psi_n(x-t) \varphi(t) dt.
\end{eqnarray*}
This concludes the proof.
\end{proof}
It is possible to have something similar also for the full-poly QSFT.
\begin{cor}
Let $ \vec{\varphi}=( \varphi_0,..., \varphi_n) $ be a vector-valued function in $L^{2}( \mathbb{R}, \mathbb{H}^{n+1}).$ Then for $ \vec{\psi}=(\psi_0,...,\psi_n)$ we have
$$ \mathbb{V}_{\vec{\psi}} \vec{\varphi}(x, \omega)= \sqrt{2} \int_{\mathbb{R}} e^{-2 \pi I \omega t} \sum_{j=0}^n \psi_j(x-t) \varphi_j(t) dt.$$
\end{cor}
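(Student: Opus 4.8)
The plan is to combine the two structural results already at hand: the additive decomposition of the full-poly QSTFT from Proposition \ref{sum2} and the integral representation of each true-poly QSTFT from Lemma \ref{Bar5}. Since everything reduces to a finite sum, no analytic subtleties (such as interchanging an infinite sum with an integral) arise, so the argument is essentially a bookkeeping computation.

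First I would invoke Proposition \ref{sum2}, namely formula \eqref{sum1}, to write
$$ \mathbb{V}_{\vec{\psi}}\vec{\varphi}(x, \omega)= \sum_{j=0}^n \mathcal{V}_{\psi_j} \varphi_j(x, \omega). $$
Next, for each index $j$ with $0 \leq j \leq n$, I would apply Lemma \ref{Bar5} to the scalar component $\varphi_j \in L^2(\mathbb{R}, \mathbb{H})$ with window $\psi_j$, obtaining
$$ \mathcal{V}_{\psi_j} \varphi_j(x, \omega)= \sqrt{2} \int_{\mathbb{R}} e^{-2 \pi I \omega t} \psi_j(x-t) \varphi_j(t) \, dt. $$

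Finally I would substitute these expressions into the decomposition and use the linearity of the integral to pull the finite sum inside and to factor out the common constant $\sqrt{2}$ and the exponential $e^{-2\pi I \omega t}$, which do not depend on $j$. This yields
$$ \mathbb{V}_{\vec{\psi}} \vec{\varphi}(x, \omega)= \sum_{j=0}^n \sqrt{2} \int_{\mathbb{R}} e^{-2 \pi I \omega t} \psi_j(x-t) \varphi_j(t) \, dt = \sqrt{2} \int_{\mathbb{R}} e^{-2 \pi I \omega t} \sum_{j=0}^n \psi_j(x-t) \varphi_j(t) \, dt, $$
which is exactly the claimed formula. The only point requiring any care is to note that the scalar exponential factor $e^{-2\pi I \omega t}$ is independent of $j$ and is multiplied on the left of each integrand, so that it can be legitimately factored out of the finite quaternionic sum; since there is no genuine obstacle beyond this noncommutativity bookkeeping, the proof is immediate.
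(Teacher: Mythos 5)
Your proof is correct and follows exactly the same route as the paper: decompose the full-poly QSTFT via Proposition \ref{sum2} and apply Lemma \ref{Bar5} (formula \eqref{Bar3}) to each summand before pulling the finite sum inside the integral. Your extra remark about the scalar factor $e^{-2\pi I \omega t}$ being independent of $j$ is a harmless elaboration of what the paper leaves implicit.
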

\begin{proof}
By Proposition \ref{sum2} we know that
$$ \mathbb{V}_{\vec{\psi}} \vec{\varphi}(x, \omega)= \sum_{j=0}^n \mathcal{V}_{\psi_j} \varphi_j(x, \omega).$$
For each member of the sum we know that the equality \eqref{Bar3} holds. So we have
$$ \mathbb{V}_{\vec{\psi}} \vec{\varphi}(x. \omega)= \sqrt{2} \int_{\mathbb{R}} e^{-2 \pi I \omega t} \sum_{j=0}^n \psi_j(x-t) \varphi_j(t) dt.$$
\end{proof}
Now, we are ready to prove the reconstruction formula for the true-poly QSTFT.
\begin{thm}
Let $ \varphi \in L^2( \mathbb{R}, \mathbb{H})$. Then for all $ y \in \mathbb{R}$  we have 
\begin{equation}
\label{reco}
\varphi(y)= \frac{1}{\sqrt{2}} \int_{\mathbb{R}^2} e^{2 \pi I \omega y} [\mathcal{V}_{\psi_n} \varphi(x, \omega)] \psi_n(x-y) d x d \omega.
\end{equation}
\end{thm}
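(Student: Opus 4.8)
The plan is to reduce the statement to the integral representation of the true-poly QSTFT furnished by Lemma \ref{Bar5}, and then to recognise the $\omega$-integration as a Fourier inversion that takes place entirely inside the commutative slice $\mathbb{C}_I$. The guiding observation, which I would record at the very start to license all subsequent rearrangements, is that the windows $\psi_n(x-t)$ and $\psi_n(x-y)$ are real-valued, hence central in $\mathbb{H}$, while every factor carrying the imaginary unit (namely $e^{2\pi I\omega y}$ and $e^{-2\pi I\omega t}$) lies in the single slice $\mathbb{C}_I\cong\mathbb{C}$, so such factors commute with one another and combine exactly as in the complex case.

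First I would substitute \eqref{Bar3} into the right-hand side of \eqref{reco}. Writing $R(y)$ for that right-hand side and using $\frac{1}{\sqrt{2}}\cdot\sqrt{2}=1$, together with the centrality of the real windows, I obtain
\begin{equation*}
R(y)=\int_{\mathbb{R}^2}\int_{\mathbb{R}} e^{2\pi I\omega y}\,e^{-2\pi I\omega t}\,\psi_n(x-t)\,\psi_n(x-y)\,\varphi(t)\,dt\,dx\,d\omega.
\end{equation*}
Since the two exponentials both live in $\mathbb{C}_I$, they commute and collapse to $e^{2\pi I\omega(y-t)}$. Applying Fubini to integrate first in $\omega$, the classical Fourier inversion formula holds verbatim inside $\mathbb{C}_I$ and yields
\begin{equation*}
\int_{\mathbb{R}} e^{2\pi I\omega(y-t)}\,d\omega=\delta(y-t),
\end{equation*}
a real (scalar) distribution that therefore commutes with everything.

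Collapsing the $t$-integral against this delta forces $t=y$ and leaves
\begin{equation*}
R(y)=\left(\int_{\mathbb{R}}\psi_n(x-y)^2\,dx\right)\varphi(y)=\|\psi_n\|^2_{L^2(\mathbb{R},\mathbb{H})}\,\varphi(y).
\end{equation*}
Finally, since $\psi_n=h_n^{2\pi}/\|h_n^{2\pi}\|_{L^2(\mathbb{R},\mathbb{H})}$ is normalised, $\|\psi_n\|^2_{L^2(\mathbb{R},\mathbb{H})}=1$, so $R(y)=\varphi(y)$, which is precisely \eqref{reco}.

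The main obstacle is the rigorous justification of the $\omega$-integration: the identity $\int_{\mathbb{R}} e^{2\pi I\omega(y-t)}\,d\omega=\delta(y-t)$ is valid only in the distributional sense, so both the interchange of integrals and its evaluation must be read weakly, for instance by pairing \eqref{reco} against a Schwartz function or by interpreting the double integral as a weakly (resp. $L^2$-) convergent vector-valued integral, as is standard for STFT inversion. Once one commits to this weak reading, the non-commutativity of $\mathbb{H}$ creates no difficulty whatsoever, precisely because of the centrality/slice observation made at the outset; an alternative, fully rigorous route would be to deduce the reconstruction from the Moyal formula of Theorem \ref{Moy1} by testing against an arbitrary $\phi\in L^2(\mathbb{R},\mathbb{H})$, which I would mention as a way to bypass the Dirac delta entirely.
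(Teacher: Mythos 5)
Your formal computation is algebraically sound: the constants match ($\tfrac{1}{\sqrt2}\cdot\sqrt2=1$ and $\|\psi_n\|_{L^2}^2=1$), and you handle the non-commutativity correctly — the windows are real hence central, and the two exponentials lie in the same slice $\mathbb{C}_I$ so they combine into $e^{2\pi I\omega(y-t)}$. However, as your own caveat concedes, this main line is not a proof as written: the inner integral $\int_{\mathbb{R}}e^{2\pi I\omega(y-t)}\,d\omega$ does not converge, the triple integral is not absolutely convergent, so Fubini cannot be invoked to integrate in $\omega$ first, and the appeal to $\delta(y-t)$ is purely symbolic. The "alternative, fully rigorous route" you mention in your last sentence — testing against an arbitrary $\Theta\in L^2(\mathbb{R},\mathbb{H})$ and invoking the Moyal formula — is not merely an alternative: it is exactly the paper's proof. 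The paper defines $\phi(y)$ to be the right-hand side of \eqref{reco}, computes $\scal{\phi,\Theta}_{L^2(\mathbb{R},\mathbb{H})}$ by Fubini, recognises the inner $y$-integral as $\overline{\mathcal{V}_{\psi_n}\Theta(x,\omega)}$ via Lemma \ref{Bar5} (taking care that conjugation reverses quaternionic products, which works out because $\psi_n$ is real), and concludes $\scal{\phi,\Theta}=\tfrac12\scal{\mathcal{V}_{\psi_n}\varphi,\mathcal{V}_{\psi_n}\Theta}_{L^2(\mathbb{R}^2,\mathbb{H})}=\scal{\varphi,\Theta}$ by Theorem \ref{Moy1}, whence $\phi=\varphi$. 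So you should promote that duality argument from a closing remark to the actual proof and demote the delta-function computation to motivation; note also that the weak pairing is only meaningful because $\mathcal{V}_{\psi_n}\varphi\in L^2(\mathbb{R}^2,\mathbb{H})$, which is itself supplied by the Moyal isometry.
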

\begin{proof}
Let us set
$$ \phi(y):=\frac{1}{\sqrt{2}} \int_{\mathbb{R}^2} e^{2 \pi I \omega y} [\mathcal{V}_{\psi_n} \varphi(x, \omega)] \psi_n(x-y) d x d \omega, \qquad \forall y \in \mathbb{R}.$$
Let $ \Theta \in L^2( \mathbb{R}, \mathbb{H})$. By Fubini's theorem, Lemma \ref{Bar5} and the Moyal formula \eqref{iso1} we get
\begin{eqnarray*}
\langle \phi, \Theta \rangle_{L^2( \mathbb{R}, \mathbb{H})}&=& \int_{\mathbb{R}} \overline{\Theta(y)} \phi(y) dy\\
&=& \frac{1}{\sqrt{2}} \int_{\mathbb{R}^3} \overline{\Theta(y)}e^{2 \pi I \omega y} [\mathcal{V}_{\psi_n} \varphi(x, \omega)] \psi_n(x-y)   d x d \omega dy\\
&=& \frac{1}{2} \int_{\mathbb{R}^2} \left( \sqrt{2} \int_{\mathbb{R}}\overline{e^{-2 \pi I \omega y} \psi_n(x-y) \Theta(y) dy} \right) \mathcal{V}_{\psi_n} \varphi(x, \omega) dx d \omega\\
&=& \frac{1}{2} \int_{\mathbb{R}^2} \overline{\mathcal{V}_{\psi_n} \Theta(x, \omega)}\mathcal{V}_{\psi_n} \varphi(x, \omega) dx d \omega\\
&=& \frac{1}{2}  \langle \mathcal{V}_{\psi_n} \varphi, \mathcal{V}_{\psi_n} \Theta \rangle_{L^2( \mathbb{R}^2, \mathbb{H})}=\langle \varphi, \Theta \rangle_{L^2( \mathbb{R}, \mathbb{H})}.
\end{eqnarray*}
Therefore, since for all $ \Theta \in L^2( \mathbb{R}, \mathbb{H})$ we have $\langle \phi, \Theta \rangle_{L^2( \mathbb{R}, \mathbb{H})}=\langle \varphi, \Theta \rangle_{L^2( \mathbb{R}, \mathbb{H})}$ we conclude that
$$ \varphi(y)=\phi(y)=\frac{1}{\sqrt{2}} \int_{\mathbb{R}^2} e^{2 \pi I \omega y} [\mathcal{V}_{\psi_n} \varphi(x, \omega)] \psi_n(x-y) d x d \omega .$$
\end{proof}
\begin{rem}
It is possible to have a kind of reconstruction formula also for the full-poly QSTFT. Basically, we use the reconstruction formula \eqref{reco} for each component of the vector-valued function $ \vec{\varphi}=(\varphi_0,...,\varphi_n)$. Thus for $ 0 \leq j \leq n$ we have
$$ \varphi_j(y)=\frac{1}{\sqrt{2}} \int_{\mathbb{R}^2} e^{2 \pi I \omega y} [\mathcal{V}_{\psi_j} \varphi_j(x, \omega)] \psi_j(x-y) d x d \omega \qquad \forall y \in \mathbb{R}.$$
\end{rem}
\begin{rem}
If $n=0$ in \eqref{reco} we obtain the formula proved in \cite[Thm. 5.8]{DMD}
\end{rem}
Both the true-poly QTSFT and the full poly one admit a left-side inverse, which is possible to compute.
\begin{thm}
\label{adj5}
Let $\Lambda \in L^2(\mathbb{R}^2, \mathbb{H})$. For all $y \in \mathbb{R}$ we define $ \mathcal{V}^{*}_{\psi_n}$ the adjoint operator of $ \mathcal{V}_{\psi_n}$ as
\begin{equation}
\label{adj}
\mathcal{V}^{*}_{\psi_n} (\Lambda) (y)= \sqrt{2} \int_{\mathbb{R}^2} e^{2 \pi I \omega y} \Lambda(x, \omega) \psi_n(x-y) dx d \omega.
\end{equation}
Moreover,
\begin{equation}
\label{adj1}
\mathcal{V}^{*}_{\psi_n}\mathcal{V}_{\psi_n}=2 Id.
\end{equation}
\end{thm}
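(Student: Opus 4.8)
The plan is to prove the statement in two stages: first to verify that the operator defined by \eqref{adj} is genuinely the Hilbert-space adjoint of $\mathcal{V}_{\psi_n}$, and then to obtain the composition identity \eqref{adj1} as an immediate consequence of the reconstruction formula \eqref{reco}.

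For the adjoint property, I would check the defining relation
\[
\langle \mathcal{V}_{\psi_n}\varphi, \Lambda\rangle_{L^2(\mathbb{R}^2,\mathbb{H})} = \langle \varphi, \mathcal{V}^*_{\psi_n}\Lambda\rangle_{L^2(\mathbb{R},\mathbb{H})}
\]
for all $\varphi\in L^2(\mathbb{R},\mathbb{H})$ and $\Lambda\in L^2(\mathbb{R}^2,\mathbb{H})$. First I would substitute the integral representation of $\mathcal{V}_{\psi_n}\varphi$ given by Lemma \ref{Bar5}, turning the left-hand side into
\[
\sqrt{2}\int_{\mathbb{R}^2}\overline{\Lambda(x,\omega)}\left(\int_{\mathbb{R}} e^{-2\pi I\omega t}\psi_n(x-t)\varphi(t)\,dt\right)dx\,d\omega.
\]
Next I would invoke Fubini's theorem to bring the $t$-integral outside and factor $\varphi(t)$ to the right of the $(x,\omega)$-integral; this is legitimate because $\psi_n$ is real-valued and thus central, and because $\varphi(t)$ does not depend on $(x,\omega)$.

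The crucial step is to recognise the remaining inner integral as the conjugate of $\mathcal{V}^*_{\psi_n}\Lambda(t)$. Using $\overline{pq}=\overline{q}\,\overline{p}$, the reality of $\psi_n$, and $\overline{e^{-2\pi I\omega t}}=e^{2\pi I\omega t}$, one verifies that
\[
\sqrt{2}\int_{\mathbb{R}^2}\overline{\Lambda(x,\omega)}\,e^{-2\pi I\omega t}\psi_n(x-t)\,dx\,d\omega=\overline{\sqrt{2}\int_{\mathbb{R}^2} e^{2\pi I\omega t}\Lambda(x,\omega)\psi_n(x-t)\,dx\,d\omega},
\]
whose conjugate is precisely the formula \eqref{adj} evaluated at $y=t$. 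The left-hand side of the adjoint relation therefore equals $\int_{\mathbb{R}}\overline{\mathcal{V}^*_{\psi_n}\Lambda(t)}\,\varphi(t)\,dt=\langle\varphi,\mathcal{V}^*_{\psi_n}\Lambda\rangle_{L^2(\mathbb{R},\mathbb{H})}$, confirming that \eqref{adj} does define the adjoint.

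For the composition identity, I would substitute $\Lambda=\mathcal{V}_{\psi_n}\varphi$ into \eqref{adj} to obtain
\[
\mathcal{V}^*_{\psi_n}\mathcal{V}_{\psi_n}\varphi(y)=\sqrt{2}\int_{\mathbb{R}^2} e^{2\pi I\omega y}\,[\mathcal{V}_{\psi_n}\varphi(x,\omega)]\,\psi_n(x-y)\,dx\,d\omega.
\]
The integral on the right is, up to the factor $\sqrt{2}$, exactly the one appearing in the reconstruction formula \eqref{reco}, which returns $\varphi(y)$ once multiplied by $\tfrac{1}{\sqrt{2}}$; hence the right-hand side equals $\sqrt{2}\cdot\sqrt{2}\,\varphi(y)=2\varphi(y)$, giving \eqref{adj1}. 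I expect the only genuinely delicate point to be the bookkeeping of quaternionic conjugation and the ordering of the non-commuting factors in the Fubini step; once that is handled, the identity \eqref{adj1} follows immediately from the already-established reconstruction formula.
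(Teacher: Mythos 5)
Your proposal is correct and follows essentially the same route as the paper: both parts rest on Lemma \ref{Bar5} plus Fubini's theorem for the adjoint identity, and on the reconstruction formula \eqref{reco} for $\mathcal{V}^{*}_{\psi_n}\mathcal{V}_{\psi_n}=2\,Id$. The only cosmetic difference is that you verify $\langle \mathcal{V}_{\psi_n}\varphi,\Lambda\rangle=\langle\varphi,\mathcal{V}^{*}_{\psi_n}\Lambda\rangle$ while the paper verifies the conjugate relation $\langle \mathcal{V}^{*}_{\psi_n}\Lambda,h\rangle=\langle\Lambda,\mathcal{V}_{\psi_n}h\rangle$; these are equivalent, and your handling of the quaternionic conjugation and of the real (hence central) factor $\psi_n$ is accurate.
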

\begin{proof}
Firstly we show that $ \mathcal{V}^{*}_{\psi_n}$ is the adjoint operator of $ \mathcal{V}_{\psi_n}$. Let $ h \in L^2( \mathbb{R}, \mathbb{H})$. The application of Fubini's theorem and formula \eqref{Bar3} lead to 
\begin{eqnarray*}
\langle \mathcal{V}^*_{\psi_n}(\Lambda),h \rangle_{L^2(\mathbb{R}, \mathbb{H})}&=& \int_{\mathbb{R}} \overline{h(y)} \mathcal{V}^*_{\psi_n}(\Lambda)(y) dy\\
&=& \sqrt{2} \int_{\mathbb{R}^3}\overline{h(y)}e^{2 \pi I \omega y} \Lambda(x, \omega) \psi_n(x-y) dx d \omega dy\\
&=& \int_{\mathbb{R}^2} \sqrt{2}  \left( \int_{\mathbb{R}} \overline{e^{-2 \pi I \omega y} \psi_n(x-y) h(y)}dy \right) \Lambda(x, \omega) dx d \omega\\
&=& \int_{\mathbb{R}^2} \overline{\mathcal{V}_{\psi_n} h(x, \omega)} \Lambda(x, \omega) dx d \omega= \langle \Lambda, \mathcal{V}_{\psi_n} h \rangle_{L^2({\mathbb{R}^2}, \mathbb{H})}.
\end{eqnarray*}
Now, we prove \eqref{adj1}. It follows by formula \eqref{reco}
\begin{eqnarray*}
\mathcal{V}_{\psi_n}^*(\mathcal{V}_{\psi_n} \varphi)(y)&=& \sqrt{2} \int_{\mathbb{R}^2} e^{2 \pi I \omega y} [\mathcal{V}_{\psi_n} \varphi(x, \omega)] \psi_n(x-y) dx d \omega\\
&=& 2 \varphi(y).
\end{eqnarray*}
\end{proof}
\begin{rem}
The formula \eqref{adj} is the left-side inverse of the true-poly QSTFT.
\end{rem}
We can prove a similar result for the full-poly QSTFT.
\begin{thm}
For  $ \Lambda \in L^2( \mathbb{R}^2, \mathbb{H})$ and for all $y \in \mathbb{R}$ we define the full-poly adjoint operator $\mathbb{V}_{\vec{\psi}}^* : L^2( \mathbb{R}^2, \mathbb{H}) \to L^2(\mathbb{R}, \mathbb{H}^{n+1})$ as
\begin{equation}
\label{vec1}
\mathbb{V}^*_{\vec{\psi}} \Lambda= \left( \mathcal{V}^*_{\psi_0} \Lambda, ..., \mathcal{V}^*_{\psi_n} \Lambda  \right),
\end{equation}
where $\vec{\psi}=(\psi_0,..., \psi_n)$ and $ \mathcal{V}^*_{\psi_j} \Lambda$ are defined as follow
$$ \mathcal{V}^{*}_{\psi_j} \Lambda= \sqrt{2} \int_{\mathbb{R}^2} e^{2 \pi I \omega y} \Lambda(x, \omega) \psi_j(x-y) dx d \omega \qquad 0 \leq j \leq n.$$
 Moreover,
\begin{equation}
\label{adj4}
\mathbb{V}_{\vec{\psi}}^* \mathbb{V}_{\vec{\psi}}=2 Id_{L^2(\mathbb{R}, \mathbb{H}^{n+1})}.
\end{equation}
\end{thm}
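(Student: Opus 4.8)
The plan is to establish the two assertions in turn: that the operator $\mathbb{V}^*_{\vec{\psi}}$ defined componentwise in \eqref{vec1} really is the adjoint of $\mathbb{V}_{\vec{\psi}}$, and then the composition identity \eqref{adj4}. Both reductions run through the already-proved scalar theory for the true-poly QSTFT together with the decomposition \eqref{sum1} and the orthogonality of the true polyanalytic Fock spaces.

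For the adjoint property, I would fix $\vec{h}=(h_0,\dots,h_n)\in L^2(\mathbb{R},\mathbb{H}^{n+1})$ and $\Lambda\in L^2(\mathbb{R}^2,\mathbb{H})$ and simply unwind the definitions. Using the vector inner product \eqref{inner} and the componentwise formula \eqref{vec1}, then the scalar adjoint relation from Theorem \ref{adj5} applied to each $\mathcal{V}^*_{\psi_j}$, and finally Proposition \ref{sum2}, one computes
\begin{align*}
\langle \mathbb{V}^*_{\vec{\psi}} \Lambda, \vec{h} \rangle_{L^2(\mathbb{R}, \mathbb{H}^{n+1})}
&= \sum_{j=0}^{n} \langle \mathcal{V}^*_{\psi_j} \Lambda, h_j \rangle_{L^2(\mathbb{R}, \mathbb{H})} \\
&= \sum_{j=0}^{n} \langle \Lambda, \mathcal{V}_{\psi_j} h_j \rangle_{L^2(\mathbb{R}^2, \mathbb{H})} \\
&= \Big\langle \Lambda, \sum_{j=0}^{n}\mathcal{V}_{\psi_j} h_j \Big\rangle_{L^2(\mathbb{R}^2, \mathbb{H})}
= \langle \Lambda, \mathbb{V}_{\vec{\psi}} \vec{h} \rangle_{L^2(\mathbb{R}^2, \mathbb{H})},
\end{align*}
which is exactly the adjoint identity. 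This part is routine once the scalar result and \eqref{sum1} are in hand.

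For the identity $\mathbb{V}^*_{\vec{\psi}}\mathbb{V}_{\vec{\psi}}=2\,Id$, I would apply $\mathbb{V}^*_{\vec{\psi}}$ to $\mathbb{V}_{\vec{\psi}}\vec{\varphi}=\sum_{j=0}^{n}\mathcal{V}_{\psi_j}\varphi_j$ and read off the $m$-th component, which is $\sum_{j=0}^{n}\mathcal{V}^*_{\psi_m}(\mathcal{V}_{\psi_j}\varphi_j)$. The diagonal term $j=m$ contributes $2\varphi_m$ by \eqref{adj1}, so everything hinges on killing the off-diagonal terms. This is the main obstacle, and I expect it to be handled by orthogonality rather than by direct integration. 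Testing against an arbitrary $\Theta\in L^2(\mathbb{R},\mathbb{H})$ and using the scalar adjoint relation, then unfolding the definition \eqref{QST1} (where the phases $e^{\pm I\pi x\omega}$ cancel since they lie in the same slice $\mathbb{C}_I$ and commute) and performing the substitution $p=\bar q/\sqrt 2$ that produces the Jacobian factor $2$, one obtains for $j\neq m$
\begin{align*}
\langle \mathcal{V}^*_{\psi_m}(\mathcal{V}_{\psi_j}\varphi_j),\Theta \rangle_{L^2(\mathbb{R},\mathbb{H})}
&= \langle \mathcal{V}_{\psi_j}\varphi_j, \mathcal{V}_{\psi_m}\Theta \rangle_{L^2(\mathbb{R}^2,\mathbb{H})} \\
&= 2\,\langle B^{j+1}\varphi_j, B^{m+1}\Theta \rangle_{\widetilde{\mathcal{F}}^{n+1}_{Slice}(\mathbb{H})} = 0,
\end{align*}
the last equality being Lemma \ref{today}, since $B^{j+1}\varphi_j\in\mathcal{F}^j_T(\mathbb{H})$ and $B^{m+1}\Theta\in\mathcal{F}^m_T(\mathbb{H})$ with $j\neq m$. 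As $\Theta$ is arbitrary, $\mathcal{V}^*_{\psi_m}(\mathcal{V}_{\psi_j}\varphi_j)=0$ for $j\neq m$, so the $m$-th component collapses to $2\varphi_m$ and therefore $\mathbb{V}^*_{\vec{\psi}}\mathbb{V}_{\vec{\psi}}\vec{\varphi}=2\vec{\varphi}$. The only genuinely substantive point is the vanishing of the cross terms, which is precisely where the orthogonal decomposition $\widetilde{\mathcal{F}}^{n+1}_{Slice}(\mathbb{H})=\bigoplus_{j=0}^{n}\mathcal{F}_T^{j}(\mathbb{H})$ is used; everything else is bookkeeping with the reconstruction formula and Corollary \ref{iso2}.
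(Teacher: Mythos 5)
Your proposal is correct, and the first half (the adjoint identity via the componentwise definition, the scalar adjoint relation of Theorem \ref{adj5}, and Proposition \ref{sum2}) is exactly what the paper intends when it says the construction ``follows from the definition of the full-poly adjoint operator and Theorem \ref{adj5}.'' Where you diverge is in the composition identity \eqref{adj4}. The paper disposes of it in one line by citing the isometry of the full-poly QSTFT (Theorem \ref{iso5}, together with the vector Moyal formula proved just after it): once $\mathbb{V}^*_{\vec{\psi}}$ is known to be the adjoint, one has
$\langle \mathbb{V}^*_{\vec{\psi}}\mathbb{V}_{\vec{\psi}}\vec{\varphi},\vec{h}\rangle = \langle \mathbb{V}_{\vec{\psi}}\vec{\varphi},\mathbb{V}_{\vec{\psi}}\vec{h}\rangle = 2\langle\vec{\varphi},\vec{h}\rangle$ for all $\vec{h}$, and the conclusion is immediate. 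You instead work componentwise: the diagonal terms give $2\varphi_m$ by the scalar identity \eqref{adj1}, and you kill the cross terms $\mathcal{V}^*_{\psi_m}(\mathcal{V}_{\psi_j}\varphi_j)$, $j\neq m$, by unfolding the definitions, substituting $p=\bar{q}/\sqrt{2}$, and invoking Lemma \ref{today}. This is a valid argument --- indeed it reproduces verbatim the computation in the proof of Theorem \ref{Moy1} and isolates explicitly the orthogonality $\widetilde{\mathcal{F}}^{n+1}_{Slice}(\mathbb{H})=\bigoplus_{j=0}^{n}\mathcal{F}_T^{j}(\mathbb{H})$ that the paper's shorter argument has already absorbed into the vector Moyal formula. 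The trade-off is transparency versus economy: your route makes visible exactly where the true polyanalytic Fock space decomposition enters, at the cost of re-deriving a special case of a result the paper has already established.
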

\begin{proof}
The construction of the vector \eqref{vec1} follows from the definition of the full-poly adjoint operator and Theorem \ref{adj5}.
Finally, \eqref{adj4} follows from the fact that the full-poly QSTFT is an isometric operator, see Theorem \ref{iso5}.
\end{proof}	
	
\begin{rem}
The vectorial operator proposed in formula \eqref{vec1} can be considered a left-side inverse of the full-poly QSTFT.
\end{rem}
\subsection{Reproducing kernel property}
Now, we will write the reproducing kernel of the quaternionic Gabor space associated to the true-poly QSTFT. We define it as   
$$\mathcal{G}_{\mathbb{H}}^{\psi_n}:=\lbrace{\mathcal{V}_{\psi_n}\varphi, \text{  } \varphi\in L^2(\mathbb{R},\mathbb{H})}\rbrace.$$
We also consider a vector-valued version of the previous space which is given by 

$$\mathbb{G}_{\mathbb{H}}^{\vec{\psi}}=\lbrace{\mathbb{V}_{\vec{\psi}}\vec{\varphi}, \textbf{ } \vec{\varphi}\in L^2(\mathbb{R}, \mathbb{H}^{n+1})}\rbrace,$$
where $ \vec{\varphi}=(\varphi_0,..., \varphi_n)$ and $ \vec{\psi}=(\psi_0,...,\psi_n)$.
\begin{thm}
\label{cor3}
Let $\varphi$ be in $L^2(\mathbb{R},\mathbb{H})$ and $\psi_n(t)=\frac{h_n^{2 \pi}(t)}{\|h_n^{2 \pi}\|_{L^2(\mathbb{R}^2,\mathbb{H})}}$. If 
$$\displaystyle K_{\psi_n}(x,\omega;x',\omega')=\int_{\mathbb{R}}e^{2\pi I(w'-w)t}\psi_n(x'-t)\psi_n(x-t)dt.$$
Then, $K_{\psi_n}$ is the reproducing kernel of the true-poly QSTFT, i.e
$$\mathcal{V}_{\psi_n}\varphi(x',w')=\scal{\mathcal{V}_{\psi_n}\varphi, K_{\psi_n}}_{L^2(\mathbb{R}^2,\mathbb{H})}.$$
\end{thm}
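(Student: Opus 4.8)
The plan is to verify the reproducing identity directly by combining the two integral representations of $\mathcal{V}_{\psi_n}$ already at our disposal, namely the sampling formula \eqref{Bar3} of Lemma \ref{Bar5} and the reconstruction formula \eqref{reco}. First I would start from the left-hand side and write, using \eqref{Bar3} at the point $(x',\omega')$,
$$\mathcal{V}_{\psi_n}\varphi(x',\omega')=\sqrt{2}\int_{\mathbb{R}}e^{-2\pi I\omega' t}\psi_n(x'-t)\varphi(t)\,dt.$$
Into this I substitute the reconstruction formula \eqref{reco} for $\varphi(t)$, namely
$$\varphi(t)=\frac{1}{\sqrt{2}}\int_{\mathbb{R}^2}e^{2\pi I\omega t}[\mathcal{V}_{\psi_n}\varphi(x,\omega)]\psi_n(x-t)\,dx\,d\omega.$$
The constants $\sqrt{2}$ and $1/\sqrt{2}$ cancel, leaving a triple integral over $(x,\omega,t)$.

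The second step is the bookkeeping of the quaternionic factors, which is where the only real care is needed. Since $\psi_n=h_n^{2\pi}/\norm{h_n^{2\pi}}_{L^2(\mathbb{R},\mathbb{H})}$ is real-valued, the factors $\psi_n(x'-t)$ and $\psi_n(x-t)$ are real scalars and commute with everything; the two exponentials $e^{-2\pi I\omega' t}$ and $e^{2\pi I\omega t}$ both lie in the slice $\mathbb{C}_I$ and therefore commute with each other, collapsing to $e^{-2\pi I(\omega'-\omega)t}$, while the quaternion-valued factor $\mathcal{V}_{\psi_n}\varphi(x,\omega)$ must be kept on the right throughout. Applying Fubini's theorem, justified by $\mathcal{V}_{\psi_n}\varphi\in L^2(\mathbb{R}^2,\mathbb{H})$ together with $\psi_n\in L^2(\mathbb{R},\mathbb{H})$ and the boundedness of the exponential, I would carry out the $t$-integration first to obtain
$$\mathcal{V}_{\psi_n}\varphi(x',\omega')=\int_{\mathbb{R}^2}\left(\int_{\mathbb{R}}e^{-2\pi I(\omega'-\omega)t}\psi_n(x'-t)\psi_n(x-t)\,dt\right)\mathcal{V}_{\psi_n}\varphi(x,\omega)\,dx\,d\omega.$$

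Finally I would identify the inner $t$-integral with $\overline{K_{\psi_n}(x,\omega;x',\omega')}$: because $\psi_n$ is real and $\overline{e^{2\pi I(\omega'-\omega)t}}=e^{-2\pi I(\omega'-\omega)t}$, conjugating the definition of $K_{\psi_n}$ yields exactly this expression. With the convention $\scal{f,g}=\int\overline{g}f$, the displayed formula then reads
$$\mathcal{V}_{\psi_n}\varphi(x',\omega')=\int_{\mathbb{R}^2}\overline{K_{\psi_n}(x,\omega;x',\omega')}\,\mathcal{V}_{\psi_n}\varphi(x,\omega)\,dx\,d\omega=\scal{\mathcal{V}_{\psi_n}\varphi,K_{\psi_n}}_{L^2(\mathbb{R}^2,\mathbb{H})},$$
which is the claimed reproducing property. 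I expect the main obstacle to be purely the non-commutative bookkeeping of the second step: one must ensure that the surviving exponential and the value $\mathcal{V}_{\psi_n}\varphi(x,\omega)$ occur in the same order as the conjugated kernel and the function do in $\scal{\cdot,\cdot}$, so that the real scalars $\psi_n$ can be commuted freely while the quaternionic factor stays to the right. It is also worth noting in passing that $K_{\psi_n}(\cdot,\cdot;x',\omega')$ genuinely belongs to $\mathcal{G}_{\mathbb{H}}^{\psi_n}$, since by \eqref{Bar3} it equals $\mathcal{V}_{\psi_n}g$ for $g(t)=\tfrac{1}{\sqrt{2}}e^{2\pi I\omega' t}\psi_n(x'-t)$, so that the identity indeed exhibits $K_{\psi_n}$ as the reproducing kernel of the Gabor space.
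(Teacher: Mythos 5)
Your proof is correct and follows essentially the same route as the paper's own argument: apply the integral representation of Lemma \ref{Bar5} at $(x',\omega')$, substitute the reconstruction formula \eqref{reco} for $\varphi(t)$, use Fubini to perform the $t$-integration first, and recognize the resulting inner integral as $\overline{K_{\psi_n}(x,\omega;x',\omega')}$. Your extra care with the non-commutative ordering and the closing observation that $K_{\psi_n}(\cdot,\cdot;x',\omega')=\mathcal{V}_{\psi_n}g$ for an explicit $g$ are correct refinements but do not change the substance of the argument.
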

\begin{proof}
We use Lemma \ref{Bar5}, the inversion formula \eqref{reco} and Fubini's theorem to get 
\begin{eqnarray*}
\mathcal{V}_{\psi_n}\varphi(x',w') \! \! \! \!&=&\! \! \! \!\sqrt{2}\int_{\mathbb{R}} e^{-2\pi Iw't} \varphi(t)\psi_n(x'-t)dt\\
& =& \! \! \! \!\int_{\mathbb{R}} e^{-2\pi Iw' t}\psi_n(x'-t)\left(\int_{\mathbb{R}^2}e^{2\pi I wt}[\mathcal{V}_{\psi_n}\varphi(x,w)]  \psi_n(x-t) dx dw\right)dt\\
& =& \! \! \! \! \int_{\mathbb{R}^3} e^{-2\pi I w't}\psi_n(x'-t)e^{2\pi I wt}
\psi_n(x-t)\mathcal{V}_{\psi_n}\varphi(x,w)dxdwdt\\
& =&\! \! \! \! \int_{\mathbb{R}^2}\left(\int_\mathbb{R} e^{-2\pi It(w'-w)}\psi_n(x'-t)\psi_n(x-t)dt\right)\mathcal{V}_{\psi_n}\varphi(x,w)dxdw\\
& = & \! \! \! \! \int_{\mathbb{R}^2}\overline{K_{\psi_n}(x,\omega;x',\omega')}\mathcal{V}_{\psi_n}\varphi(x,w)dx dw\\
& =& \! \! \! \! \scal{\mathcal{V}_{\psi_n}\varphi, K_{\psi_n}}_{L^2(\mathbb{R}^2,\mathbb{H})}.
\end{eqnarray*}	
\end{proof}
A similar theorem holds for the full-poly QSTFT, that we can state as follows.
\begin{thm}
\label{cor4}
Let $\vec{\varphi}=(\varphi_0,..., \varphi_n )\in L^2(\mathbb{R},\mathbb{H}^{n+1})$. The following functions $$\displaystyle K_{\vec{\psi}}(x, \omega; x', \omega')= \sum_{j=0}^n \int_{\mathbb{R}}e^{2\pi I(w'-w)t}\psi_j(x'-t)\psi_j(x-t)dt$$
are the reproducing kernel of the space $\mathbb{G}_{\mathbb{H}}^{\vec{\psi}}$, i.e:
$$\mathbb{V}_{\vec{\psi}}\vec{\varphi}(x',w')=\scal{\mathbb{V}_{\psi_N}\vec{\varphi}, K_{\vec{\psi}} }_{L^2(\mathbb{R}^2,\mathbb{H}^{n+1})}.$$
	
\end{thm}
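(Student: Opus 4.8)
The plan is to reduce the full-poly statement to the true-poly reproducing kernel property already established in Theorem~\ref{cor3}, exploiting the additive decomposition of both the transform and the kernel. First I would record the two structural facts on which everything rests: Proposition~\ref{sum2} gives $\mathbb{V}_{\vec{\psi}}\vec{\varphi}=\sum_{j=0}^n\mathcal{V}_{\psi_j}\varphi_j$, and by construction $K_{\vec{\psi}}=\sum_{j=0}^n K_{\psi_j}$, where each $K_{\psi_j}$ is the true-poly kernel of Theorem~\ref{cor3}. The vectorial inner product on $L^2(\mathbb{R}^2,\mathbb{H}^{n+1})$ is the one introduced in \eqref{inner}, so it splits as a sum over $j$ of scalar $L^2(\mathbb{R}^2,\mathbb{H})$ inner products attached to the component families $(\mathcal{V}_{\psi_j}\varphi_j)_j$ and $(K_{\psi_j})_j$.

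With this in hand the argument is short. By \eqref{inner},
$$\langle \mathbb{V}_{\vec{\psi}}\vec{\varphi}, K_{\vec{\psi}}\rangle_{L^2(\mathbb{R}^2,\mathbb{H}^{n+1})}=\sum_{j=0}^n \langle \mathcal{V}_{\psi_j}\varphi_j, K_{\psi_j}\rangle_{L^2(\mathbb{R}^2,\mathbb{H})}.$$
Theorem~\ref{cor3}, applied to each component $\varphi_j\in L^2(\mathbb{R},\mathbb{H})$, evaluates every summand as $\langle \mathcal{V}_{\psi_j}\varphi_j, K_{\psi_j}\rangle_{L^2(\mathbb{R}^2,\mathbb{H})}=\mathcal{V}_{\psi_j}\varphi_j(x',\omega')$. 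Summing over $j$ and invoking Proposition~\ref{sum2} once more gives $\sum_{j=0}^n\mathcal{V}_{\psi_j}\varphi_j(x',\omega')=\mathbb{V}_{\vec{\psi}}\vec{\varphi}(x',\omega')$, which is exactly the claim.

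As a self-contained alternative, I would mirror the computation in the proof of Theorem~\ref{cor3}. Starting from the full-poly analogue of Lemma~\ref{Bar5}, namely $\mathbb{V}_{\vec{\psi}}\vec{\varphi}(x',\omega')=\sqrt{2}\int_{\mathbb{R}}e^{-2\pi I\omega' t}\sum_{j=0}^n\psi_j(x'-t)\varphi_j(t)\,dt$, I would substitute into each $\varphi_j(t)$ its component-wise reconstruction formula \eqref{reco}, apply Fubini's theorem to interchange the $t$-integral with the $(x,\omega)$-integral, and use that the real scalars $\psi_j$ commute with the quaternionic factors so that $e^{-2\pi I\omega' t}e^{2\pi I\omega t}=e^{-2\pi I(\omega'-\omega)t}$. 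The inner $t$-integral then collapses to $\int_{\mathbb{R}}e^{-2\pi I(\omega'-\omega)t}\psi_j(x'-t)\psi_j(x-t)\,dt=\overline{K_{\psi_j}(x,\omega;x',\omega')}$, producing $\sum_{j=0}^n\int_{\mathbb{R}^2}\overline{K_{\psi_j}(x,\omega;x',\omega')}\,\mathcal{V}_{\psi_j}\varphi_j(x,\omega)\,dx\,d\omega$, which is the same vectorial pairing.

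The main obstacle is conceptual rather than computational, and lies in reading the right-hand side correctly: the pairing must be the \emph{vectorial} inner product \eqref{inner} acting on the component families, so that only the diagonal $j=j$ contributions appear. If one instead formed the naive scalar $L^2(\mathbb{R}^2,\mathbb{H})$ inner product of the two sums $\sum_j\mathcal{V}_{\psi_j}\varphi_j$ and $\sum_j K_{\psi_j}$, off-diagonal terms $\langle \mathcal{V}_{\psi_m}\varphi_m, K_{\psi_j}\rangle$ with $j\neq m$ would survive; these pair the reproducing kernel of $\mathcal{G}_{\mathbb{H}}^{\psi_j}$ against an element of the different space $\mathcal{G}_{\mathbb{H}}^{\psi_m}$ and need not vanish. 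Getting the index bookkeeping right, together with the routine justification of the Fubini interchange in the direct approach (integrability following from $\vec{\varphi}\in L^2(\mathbb{R},\mathbb{H}^{n+1})$ and the Schwartz decay of the windows $\psi_j$), are the only points requiring care.
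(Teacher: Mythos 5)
Your proof is correct and follows essentially the same route as the paper: decompose the full-poly QSTFT via Proposition \ref{sum2}, split the vectorial inner product \eqref{inner} into its diagonal scalar components, and apply Theorem \ref{cor3} to each one. Your observation that the pairing must be read as the vectorial inner product on component families (so that no off-diagonal terms $\langle \mathcal{V}_{\psi_m}\varphi_m, K_{\psi_j}\rangle$ with $j\neq m$ arise) is exactly the point the paper's proof relies on implicitly, and your remark makes it explicit.
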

\begin{proof}
It follows from the previous theorem and the fact that we can write the full-poly QSTFT as sums of true-poly QSTFTs (Proposition \ref{sum2}) and the definition of inner product of the space $L^2(\mathbb{R}^2, \mathbb{H}^{n+1})$ (see \eqref{inner}). In particular, we have 
\[ \begin{split}
\displaystyle \mathbb{V}_{\vec{\psi}}\vec{\varphi}(x',w')&=\sum_{j=0}^n \mathcal{V}_{\psi_j}\varphi_j(x',w') \\
&=\sum_{j=0}^{n}\scal{\mathcal{V}_{\psi_j}\varphi_j, K_{\psi_j}}_{L^2(\mathbb{R}^2,\mathbb{H})}\\
& = \scal{\mathbb{V}_{\vec{\psi}}\vec{\varphi}, K_{\vec{\psi}} }_{L^2(\mathbb{R}^2,\mathbb{H}^{n+1})}.
\\ 
\end{split}
\]

\end{proof}
\begin{rem}
If $n=0$ in Theorem \ref{cor3} and Theorem \ref{cor4} we recover the same result of \cite[Thm.5.12]{DMD}.
\end{rem}

\subsection{Lieb's uncertainty principle}
In this section we want to extend to the quaternionic polyanalytic theory the Lieb's uncertainty principle. Let us recall that the uncertainty principles state that a signal cannot be simultaneously sharply located both in time and frequency domains.  This is emphasised by the following generic principle \cite{G}:
\newline
\begin{center}
\emph{"A function cannot be concentrated on small sets in the time-frequency plane, no matter which  time-frequency representation is used."}
\end{center}
\begin{thm}{(Weak uncertainty principle)}
\label{Wun1}
Let $ \varphi \in L^{2}( \mathbb{R},\mathbb{H})$ be an unit vector, $U$ an open set of $ \mathbb{R}^2$ and $ \varepsilon \geq 0$ such that
\begin{equation}
\label{ipo1}
\int_{U} | \mathcal{V}_{\psi_n} \varphi(x, \omega)|^2 dx d \omega \geq 1- \varepsilon,
\end{equation}
then
$|U| \geq \frac{1- \varepsilon}{2}.$
\end{thm}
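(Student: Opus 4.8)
The plan is to reduce the statement to a single uniform pointwise estimate on the true-poly QSTFT, after which the hypothesis \eqref{ipo1} does all the work. First I would establish the sup-norm bound
$$|\mathcal{V}_{\psi_n}\varphi(x,\omega)| \leq \sqrt{2}\, \|\varphi\|_{L^2(\mathbb{R},\mathbb{H})} \qquad \text{for all } (x,\omega)\in\mathbb{R}^2.$$
To obtain it, I start from Definition \ref{ST1}, namely $\mathcal{V}_{\psi_n}\varphi(x,\omega)=e^{-I\pi x\omega}B^{n+1}(\varphi)\left(\frac{\bar q}{\sqrt2}\right)e^{-\frac{|q|^2\pi}{2}}$ with $q=x+I\omega$, and use that the quaternionic exponential $e^{-I\pi x\omega}$ has modulus one. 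Applying the growth estimate of Proposition \ref{Kamal} at the point $\frac{\bar q}{\sqrt2}$ and noting that $\left|\frac{\bar q}{\sqrt2}\right|^2=\frac{|q|^2}{2}$, I get $\left|B^{n+1}(\varphi)\left(\frac{\bar q}{\sqrt2}\right)\right|\leq \sqrt2\, e^{\pi|q|^2/2}\|\varphi\|_{L^2(\mathbb{R},\mathbb{H})}$; the Gaussian weight $e^{-\pi|q|^2/2}$ then cancels this exponential factor exactly, leaving the claimed uniform bound.

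Next, since $\varphi$ is a unit vector we have $\|\varphi\|_{L^2(\mathbb{R},\mathbb{H})}=1$, so squaring the pointwise bound gives $|\mathcal{V}_{\psi_n}\varphi(x,\omega)|^2\leq 2$ everywhere on $\mathbb{R}^2$. Integrating this inequality over $U$ and combining it with the hypothesis \eqref{ipo1} yields
$$1-\varepsilon \leq \int_U |\mathcal{V}_{\psi_n}\varphi(x,\omega)|^2\, dx\, d\omega \leq 2\,|U|,$$
and dividing by $2$ produces the desired conclusion $|U|\geq \frac{1-\varepsilon}{2}$. Conceptually, the factor $\tfrac12$ is forced by the normalization $\|\mathcal{V}_{\psi_n}\varphi\|^2_{L^2(\mathbb{R}^2,\mathbb{H})}=2\|\varphi\|^2_{L^2(\mathbb{R},\mathbb{H})}$ from Corollary \ref{iso2}, which records that a unit signal carries total time-frequency mass $2$ in this representation.

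I do not expect a genuine obstacle: the entire content sits in the first step, i.e. in verifying that the growth estimate of Proposition \ref{Kamal} transforms, under the substitution $q\mapsto \frac{\bar q}{\sqrt2}$ together with the Gaussian factor, into a bounded sup-norm for the QSTFT. The only points needing care are the modulus computation $\left|\frac{\bar q}{\sqrt2}\right|=\frac{|q|}{\sqrt2}$ and the fact that $|e^{-I\pi x\omega}|=1$ in the noncommutative setting, both of which are immediate from the definition of the quaternionic modulus; everything after the uniform bound is a one-line integration, with $\varepsilon\geq 0$ merely tracking the defect from full concentration.
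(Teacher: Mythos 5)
Your proposal is correct and follows exactly the paper's argument: apply the pointwise growth estimate of Proposition \ref{Kamal} at the point $\frac{\bar q}{\sqrt 2}$, observe that the factor $e^{\pi|q|^2/2}$ is cancelled by the Gaussian weight to obtain the uniform bound $|\mathcal{V}_{\psi_n}\varphi(x,\omega)|\leq\sqrt{2}\,\|\varphi\|_{L^2(\mathbb{R},\mathbb{H})}$, and then integrate over $U$ against the hypothesis \eqref{ipo1}. The only difference is cosmetic: you make explicit the modulus computation $\left|\frac{\bar q}{\sqrt 2}\right|^2=\frac{|q|^2}{2}$, which the paper leaves implicit.
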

\begin{proof}
From the definition of true-poly QSTFT and Proposition \ref{Kamal} we have 
\begin{eqnarray}
\label{est1}
| \mathcal{V}_{\psi_n} \varphi(x, \omega)| \! \! \! \! &=&  \! \! \! \! | e^{-I \pi x \omega}| \left| B^{n+1}\left( \frac{\bar{q}}{\sqrt{2}}\right)\right| e^{-\frac{|q|^2 \pi}{2}}\\ \nonumber
& \leq& \! \! \! \! \sqrt{2} e^{\frac{|q|^2 \pi}{2}} \| \varphi \|_{L^2(\mathbb{R},\mathbb{H})} e^{-\frac{|q|^2 \pi}{2}}\\ \nonumber
&=& \! \! \! \! \sqrt{2}. \nonumber
\end{eqnarray}
Thus by \eqref{ipo1} we get
$$ 1- \varepsilon \leq \int_{U} | \mathcal{V}_{\psi_n} \varphi(x, \omega)|^2 dx d \omega  \leq \| \mathcal{V}_{\psi_n} \varphi \|_{\infty}^2 |U| \leq 2 |U|.$$
Hence
$$|U| \geq \frac{1- \varepsilon}{2}.$$
\end{proof}
A weak uncertainty principle holds also for the full-poly QSTFT.
\begin{thm}
\label{Wun2}
Let $ \vec{\varphi}=(\varphi_0,...,\varphi_n)$ be a vector valued function in $L^{2}(\mathbb{R}, \mathbb{H}^{n+1})$ with $ \| \varphi_j \|_{L^2(\mathbb{R}, \mathbb{H})}=1$, for all $1 \leq j \leq n$, and $ \vec{\psi}=(\psi_0,...,\psi_n)$ be a vector-valued window function. If $U$ is an open set of $ \mathbb{R}^2$ and $ \varepsilon \geq 0$ we suppose
\begin{equation}
\label{ipo2}
\int_{U} | \mathbb{V}_{\vec{\psi}} \vec{\varphi}(x, \omega)|^2 dx d \omega \geq 1- \varepsilon,
\end{equation}
then $ |U| \geq \frac{1- \varepsilon}{2(n+1)^2 }.$
\end{thm}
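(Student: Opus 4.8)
The plan is to mimic the proof of Theorem \ref{Wun1}, replacing the scalar pointwise bound coming from Proposition \ref{Kamal} by the vector-valued estimate of Proposition \ref{estimate1}. First I would produce a uniform (sup-norm) bound on the full-poly QSTFT. Writing $q = x + I\omega$ and using the definition \eqref{QST2} together with Proposition \ref{estimate1} applied at the point $\bar{q}/\sqrt{2}$, and observing that $|\bar{q}/\sqrt{2}|^2 = |q|^2/2$, one obtains
\begin{equation*}
|\mathbb{V}_{\vec{\psi}} \vec{\varphi}(x, \omega)| = |e^{-I \pi x \omega}| \left| \mathfrak{B}(\vec{\varphi})\left( \frac{\bar{q}}{\sqrt{2}} \right) \right| e^{-\frac{|q|^2 \pi}{2}} \leq \sqrt{2(n+1)}\, e^{\frac{|q|^2 \pi}{2}} \|\vec{\varphi}\|_{L^2(\mathbb{R}, \mathbb{H}^{n+1})}\, e^{-\frac{|q|^2 \pi}{2}},
\end{equation*}
so that the two exponential factors cancel and $|\mathbb{V}_{\vec{\psi}} \vec{\varphi}(x, \omega)| \leq \sqrt{2(n+1)}\, \|\vec{\varphi}\|_{L^2(\mathbb{R}, \mathbb{H}^{n+1})}$, a constant independent of $(x,\omega)$.

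Next I would evaluate the norm appearing on the right. By the definition \eqref{nnorm} of the $L^2(\mathbb{R}, \mathbb{H}^{n+1})$-norm and the normalization hypothesis $\|\varphi_j\|_{L^2(\mathbb{R}, \mathbb{H})} = 1$ for every component, we have $\|\vec{\varphi}\|_{L^2(\mathbb{R}, \mathbb{H}^{n+1})}^2 = \sum_{j=0}^n \|\varphi_j\|_{L^2(\mathbb{R}, \mathbb{H})}^2 = n+1$, hence $\|\vec{\varphi}\|_{L^2(\mathbb{R}, \mathbb{H}^{n+1})} = \sqrt{n+1}$. Substituting this back yields the sup-norm estimate $\|\mathbb{V}_{\vec{\psi}} \vec{\varphi}\|_{\infty} \leq \sqrt{2(n+1)}\cdot\sqrt{n+1} = \sqrt{2}\,(n+1)$, and therefore $\|\mathbb{V}_{\vec{\psi}} \vec{\varphi}\|_{\infty}^2 \leq 2(n+1)^2$.

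Finally I would conclude exactly as in Theorem \ref{Wun1}: from the concentration hypothesis \eqref{ipo2} and the sup-norm bound,
\begin{equation*}
1 - \varepsilon \leq \int_{U} |\mathbb{V}_{\vec{\psi}} \vec{\varphi}(x, \omega)|^2 \, dx\, d\omega \leq \|\mathbb{V}_{\vec{\psi}} \vec{\varphi}\|_{\infty}^2 \,|U| \leq 2(n+1)^2 |U|,
\end{equation*}
which rearranges to $|U| \geq \frac{1-\varepsilon}{2(n+1)^2}$, as claimed. There is essentially no conceptual obstacle here; the only point requiring care is the bookkeeping of constants, namely the interplay between the factor $\sqrt{2(n+1)}$ supplied by the reproducing-kernel estimate of Proposition \ref{estimate1} and the factor $\sqrt{n+1}$ arising from the per-component normalization, whose product is precisely what produces the $2(n+1)^2$ in the denominator of the final bound.
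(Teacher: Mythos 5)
Your proof is correct and reaches the same final bound, but it obtains the key sup-norm estimate $\|\mathbb{V}_{\vec{\psi}}\vec{\varphi}\|_{\infty}\leq \sqrt{2}\,(n+1)$ by a different route than the paper. The paper first decomposes the full-poly QSTFT as a sum of true-poly QSTFTs via Proposition \ref{sum2}, applies the scalar pointwise bound $|\mathcal{V}_{\psi_j}\varphi_j(x,\omega)|\leq \sqrt{2}$ (estimate \eqref{est1}, coming from Proposition \ref{Kamal} and the normalization of each component) to each summand, and concludes by the triangle inequality that the sum is bounded by $\sqrt{2}(n+1)$. You instead invoke the vector-valued reproducing-kernel estimate of Proposition \ref{estimate1} directly on $\mathfrak{B}\vec{\varphi}$ evaluated at $\bar{q}/\sqrt{2}$, which gives the factor $\sqrt{2(n+1)}$, and then compute $\|\vec{\varphi}\|_{L^2(\mathbb{R},\mathbb{H}^{n+1})}=\sqrt{n+1}$ from the per-component normalization; the product recovers the same constant $\sqrt{2}(n+1)$. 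Your route is arguably more streamlined, since it bypasses the decomposition into true-poly pieces entirely and rests on a single estimate for the full polyanalytic Fock space; the paper's route makes the dependence on the individual components more transparent and reuses the machinery already set up for the scalar Theorem \ref{Wun1}. Both arguments require every component $\varphi_j$, $0\leq j\leq n$, to be normalized (the range ``$1\leq j\leq n$'' in the statement is evidently a typo), and you correctly use this for all $n+1$ components. The final integration step over $U$ is identical in both proofs.
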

\begin{proof}
By Proposition \ref{sum2} we know that
$$\mathbb{V}_{\vec{\psi}}\vec{\varphi}= \sum_{j=0}^n \mathcal{V}_{\psi_j} \varphi_j(x, \omega).$$
Thus by the estimate \eqref{est1} applied at each members of the sum turns out that
\begin{equation}
\label{est2}
\left| \mathbb{V}_{\vec{\psi}}\vec{\varphi}(x, \omega)\right|\leq \sum_{j=0}^n \left|\mathcal{V}_{\psi_j} \varphi_j(x, \omega)\right| \leq \sqrt{2} (n+1).
\end{equation}
Therefore by \eqref{ipo2}  and \eqref{est2} we obtain
$$ 1- \varepsilon \leq \int_{U} | \mathbb{V}_{\vec{\psi}} \vec{\varphi}(x, \omega)|^2 dx d \omega  \leq \| \mathbb{V}_{\vec{\psi}} \vec{\varphi} \|_{\infty}^2|U| \leq 2|U|(n+1)^2.$$
Then
$$ |U| \geq \frac{1- \varepsilon}{2(n+1)^2 }.$$
\end{proof}
In order to improve the estimates of the weak uncertainty principles we need the following $L^p$- estimate of the true-poly QSTFT and the full-poly one. For the first one we omit the proof since it can be shown with exactly the same computations of \cite[Thm. 5.14]{DMD}.
\begin{prop}
Let $ \varphi \in L^2(\mathbb{R}, \mathbb{H})$ and $p \in [2, \infty)$ then we have
\begin{equation}
\label{Lieb}
\int_{\mathbb{R}^2}| \mathcal{V}_{\psi_n} \varphi(x, \omega)|^p dx d \omega \leq \frac{2^{p+1}}{p} \| \varphi \|_{L^2(\mathbb{R}, \mathbb{H})}^p.
\end{equation}
\end{prop}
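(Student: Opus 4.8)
The plan is to deduce the estimate by interpolating between the uniform ($L^\infty$) bound and the $L^2$ identity that the excerpt has already established for $\mathcal{V}_{\psi_n}$, so that no integral has to be computed directly; this is the quaternionic analogue of the elementary ($p\ge 2$) direction of Lieb's inequality. The two endpoint ingredients are both at hand. On one side, the pointwise bound recorded in \eqref{est1}, itself a consequence of Proposition \ref{Kamal}, gives $|\mathcal{V}_{\psi_n}\varphi(x,\omega)|\le\sqrt2$ whenever $\|\varphi\|_{L^2(\mathbb{R},\mathbb{H})}=1$; rescaling $\varphi$ to unit norm (using that $\mathcal{V}_{\psi_n}$ commutes with real scalars) upgrades this to $\|\mathcal{V}_{\psi_n}\varphi\|_{L^\infty(\mathbb{R}^2,\mathbb{H})}\le\sqrt2\,\|\varphi\|_{L^2(\mathbb{R},\mathbb{H})}$ for every $\varphi$, the decisive point being that the Gaussian $e^{-\pi|q|^2/2}$ exactly cancels the growth $e^{\pi|q|^2/2}$ of $B^{n+1}\varphi(\bar{q}/\sqrt2)$. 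On the other side, Corollary \ref{iso2} furnishes the $L^2$ identity $\|\mathcal{V}_{\psi_n}\varphi\|_{L^2(\mathbb{R}^2,\mathbb{H})}^2=2\,\|\varphi\|_{L^2(\mathbb{R},\mathbb{H})}^2$.

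With these in place, for $p\ge2$ I would split the exponent as $p=(p-2)+2$, bound the first factor by its supremum, and invoke the $L^2$ identity for the remaining factor:
\[
\int_{\mathbb{R}^2}|\mathcal{V}_{\psi_n}\varphi(x,\omega)|^p\,dx\,d\omega
\le\|\mathcal{V}_{\psi_n}\varphi\|_{L^\infty(\mathbb{R}^2,\mathbb{H})}^{\,p-2}\int_{\mathbb{R}^2}|\mathcal{V}_{\psi_n}\varphi(x,\omega)|^2\,dx\,d\omega
\le\bigl(\sqrt2\,\|\varphi\|\bigr)^{p-2}\cdot 2\|\varphi\|^2 ,
\]
where $\|\varphi\|:=\|\varphi\|_{L^2(\mathbb{R},\mathbb{H})}$. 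This already yields the clean bound $\int_{\mathbb{R}^2}|\mathcal{V}_{\psi_n}\varphi|^p\,dx\,d\omega\le 2^{p/2}\|\varphi\|^p$, and since $p\le 2^{(p+2)/2}$ for all $p\ge2$ one has $2^{p/2}\le 2^{p+1}/p$, so the stated inequality with constant $\tfrac{2^{p+1}}{p}$ follows a fortiori. Alternatively, if one prefers to see the $\tfrac1p$ of the statement directly, one keeps a full Gaussian factor after the change of variable $p=\bar q/\sqrt2$ and uses $\int_{\mathbb{R}^2}e^{-p\pi|q|^2/2}\,dx\,d\omega=\tfrac2p$.

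The computation is essentially routine, so the only genuine subtlety is the bookkeeping of constants: one must check that the $L^\infty$ bound is truly uniform in $(x,\omega)$ — this is precisely the Gaussian cancellation noted above — and that the normalization $\|\psi_n\|_{L^2(\mathbb{R},\mathbb{H})}=1$ is used consistently when quoting Corollary \ref{iso2}. The harder question, which is \emph{not} needed here, would be to produce the sharp constant; its extremal configuration is Gaussian-like and its determination requires the sharp Young/Hausdorff–Young inequalities rather than the elementary interpolation above. Because $2^{p/2}\le 2^{p+1}/p$ on $[2,\infty)$, that refinement is unnecessary for the (non-sharp) bound in the statement, and the proof mimics \cite[Thm.~5.14]{DMD} exactly.
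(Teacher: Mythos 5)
Your main argument is correct, and it is worth noting that the paper itself gives no proof of this proposition: it is explicitly omitted and deferred to the computations of \cite[Thm.~5.14]{DMD}. Your route --- bounding $\int_{\mathbb{R}^2}|\mathcal{V}_{\psi_n}\varphi|^p\,dx\,d\omega \le \|\mathcal{V}_{\psi_n}\varphi\|_{\infty}^{p-2}\,\|\mathcal{V}_{\psi_n}\varphi\|_{L^2(\mathbb{R}^2,\mathbb{H})}^{2}$ and then invoking the pointwise bound \eqref{est1} (which, being homogeneous in $\varphi$, needs no rescaling) together with Corollary \ref{iso2} --- is complete and in fact yields the sharper constant $2^{p/2}$, which dominates the claimed $2^{p+1}/p$ since $2^{(p+2)/2}\ge p$ on $[2,\infty)$; so the stated inequality follows a fortiori, and at $p=2$ your constant is even attained. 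This is genuinely more elementary than the Lieb-type computation the paper points to: the factor $1/p$ in the constant $2^{p+1}/p$ comes from integrating a surviving Gaussian, $\int_{\mathbb{R}^2}e^{-p\pi|q|^2/2}\,dx\,d\omega=2/p$, and that route requires more than the sup bound on $B^{n+1}\varphi$. Indeed, your parenthetical ``alternative'' does not work as sketched: the estimate $|B^{n+1}\varphi(\bar q/\sqrt2)|\le \sqrt2\,e^{\pi|q|^2/2}\|\varphi\|$ \emph{exactly} cancels the Gaussian $e^{-p\pi|q|^2/2}$ when raised to the $p$-th power, leaving nothing integrable; to retain a Gaussian one needs a genuine Lieb-type self-improvement (a sub-mean-value argument for $|F|^{p/2}$ with $F$ in the Fock space, or the reproducing-kernel trick), which is presumably what \cite{DMD} does. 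So your closing claim that the proof ``mimics \cite[Thm.~5.14]{DMD} exactly'' is almost certainly inaccurate --- but immaterial, since your interpolation argument stands on its own and proves a stronger bound. The only cosmetic caution is the clash of notation in your change of variables ($p=\bar q/\sqrt2$ versus the exponent $p$), which should be renamed if this text were incorporated.
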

\begin{prop}
\label{Lieb2}
Let $ \vec{\varphi}=(\varphi_0,...,\varphi_n)$ be a vector valued function in $L^{2}( \mathbb{R}, \mathbb{H}^{n+1})$. For $p \in [2, \infty)$ we have
\begin{equation}
\label{Lieb1}
\int_{\mathbb{R}^2}| \mathbb{V}_{\vec{\psi}} \vec{\varphi}(x, \omega)|^p dx d \omega \leq \frac{2^{p+1}}{p} (n+1)^{p-1} \| \vec{\varphi}\|_{L^2(\mathbb{R}, \mathbb{H}^{n+1})}^p.
\end{equation}
\end{prop}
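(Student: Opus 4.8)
The plan is to reduce this vector-valued estimate to the scalar $L^p$-estimate \eqref{Lieb} already established for the true-poly QSTFT, exploiting the additive decomposition of the full-poly QSTFT. By Proposition \ref{sum2} we have the identity \eqref{sum1}, that is $\mathbb{V}_{\vec{\psi}}\vec{\varphi}(x,\omega)=\sum_{j=0}^n \mathcal{V}_{\psi_j}\varphi_j(x,\omega)$, so the triangle inequality gives the pointwise bound $|\mathbb{V}_{\vec{\psi}}\vec{\varphi}(x,\omega)|\leq \sum_{j=0}^n |\mathcal{V}_{\psi_j}\varphi_j(x,\omega)|$, exactly as in the first inequality of \eqref{est2}. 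The idea is then to raise this bound to the $p$-th power, integrate over $\mathbb{R}^2$, and invoke \eqref{Lieb} term by term.

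The first key step is to control the $p$-th power of the sum by a constant multiple of the sum of $p$-th powers. For nonnegative reals $a_0,\dots,a_n$ and $p\geq 1$, convexity of $t\mapsto t^p$ (equivalently, Hölder's inequality against the constant vector) yields the power-mean inequality $\left(\sum_{j=0}^n a_j\right)^p \leq (n+1)^{p-1}\sum_{j=0}^n a_j^p$. Applying this with $a_j=|\mathcal{V}_{\psi_j}\varphi_j(x,\omega)|$ and integrating, I would obtain
$$\int_{\mathbb{R}^2}|\mathbb{V}_{\vec{\psi}}\vec{\varphi}(x,\omega)|^p\, dx\, d\omega \leq (n+1)^{p-1}\sum_{j=0}^n \int_{\mathbb{R}^2}|\mathcal{V}_{\psi_j}\varphi_j(x,\omega)|^p\, dx\, d\omega.$$
The scalar estimate \eqref{Lieb}, applied to each $\varphi_j$ with window $\psi_j$, then bounds the right-hand side by $(n+1)^{p-1}\frac{2^{p+1}}{p}\sum_{j=0}^n \|\varphi_j\|_{L^2(\mathbb{R},\mathbb{H})}^p$.

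The second key step, which converts this into the stated vectorial norm, is the comparison $\sum_{j=0}^n \|\varphi_j\|_{L^2(\mathbb{R},\mathbb{H})}^p \leq \left(\sum_{j=0}^n \|\varphi_j\|_{L^2(\mathbb{R},\mathbb{H})}^2\right)^{p/2}=\|\vec{\varphi}\|_{L^2(\mathbb{R},\mathbb{H}^{n+1})}^p$, where the middle identity is the definition \eqref{nnorm}. This is the embedding $\ell^{p/2}\hookrightarrow\ell^1$ applied to the entries $\|\varphi_j\|_{L^2(\mathbb{R},\mathbb{H})}^2$, valid precisely because $p/2\geq 1$ for $p\in[2,\infty)$; here the hypothesis $p\geq 2$ is used in an essential way. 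Combining the two steps gives the claimed bound. I do not expect a genuine obstacle in this argument: the only subtlety is bookkeeping the exponents so that the two inequalities dovetail, namely matching the factor $(n+1)^{p-1}$ from the power-mean step with the $\ell^{p/2}$-to-$\ell^1$ passage that requires $p\geq 2$; everything else is a direct invocation of Proposition \ref{sum2} and \eqref{Lieb}.
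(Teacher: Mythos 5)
Your proposal is correct and follows essentially the same route as the paper's proof: decomposition via Proposition \ref{sum2}, the power-mean inequality $\left(\sum_{j=0}^n a_j\right)^p\leq (n+1)^{p-1}\sum_{j=0}^n a_j^p$, termwise application of \eqref{Lieb}, and the comparison $\sum_{j=0}^n\|\varphi_j\|^p\leq\left(\sum_{j=0}^n\|\varphi_j\|^2\right)^{p/2}$ for $p\geq 2$. The only quibble is that the last step is the embedding $\ell^1\hookrightarrow\ell^{p/2}$ (i.e.\ $\Vert\cdot\Vert_{\ell^{p/2}}\leq\Vert\cdot\Vert_{\ell^1}$), not the reverse, but the inequality you actually wrote is the correct one.
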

\begin{proof}
From \eqref{sum1} we obtain
$$ | \mathbb{V}_{\vec{\psi}} \vec{\varphi}(x, \omega)|^p \leq \left( \sum_{j=0}^n | \mathcal{V}_{\psi_j} \varphi_j(x, \omega)| \right)^p \leq (n+1)^{p-1} \sum_{j=0}^n | \mathcal{V}_{\psi_j} \varphi_j(x. \omega)|^p. $$
Now, we integrate and apply \eqref{Lieb} on each members of the sum and we get
\begin{eqnarray*}
\int_{\mathbb{R}^2}| \mathbb{V}_{\vec{\psi}} \vec{\varphi}(x, \omega)|^p dx d \omega \! \! \! \! &\leq& \! \! \! \! (n+1)^{p-1} \sum_{j=0}^n  \int_{\mathbb{R}^2}| \mathcal{V}_{\psi_j} \varphi_j(x, \omega)|^p dx d \omega\\
& \leq & \! \! \! \! \frac{2^{p+1}}{p} (n+1)^{p-1} \sum_{j=0}^n \| \varphi_j \|^p_{L^{2}(\mathbb{R}, \mathbb{H})}\\
& \leq& \! \! \! \! \frac{2^{p+1}}{p} (n+1)^{p-1} \left( \sum_{j=0}^n \| \varphi_j \|^{2}_{L^{2}(\mathbb{R}, \mathbb{H})} \right)^{\frac{p}{2}}\\
&=& \! \! \! \! \frac{2^{p+1}}{p} (n+1)^{p-1} \| \vec{\varphi}\|_{L^2(\mathbb{R}, \mathbb{H}^{n+1})}^p.
\end{eqnarray*}
\end{proof}
Next, we show that the inequalities \eqref{Lieb} and \eqref{Lieb1} yield a sharper estimates for the Theorem \ref{Wun1} and Theorem \ref{Wun2}, respectively.
\begin{thm}
\label{Wun3}
Let us consider $ \vec{\varphi}=( \varphi_0,..., \varphi_n) \in L^2( \mathbb{R}, \mathbb{H}^{n+1})$, such that $ \| \varphi_j \|_{L^2(\mathbb{R}, \mathbb{H})}=1$, for any $ 0 \leq j \leq n$. Let $U$ be an open set of $ \mathbb{R}^2$, $ \varepsilon \geq 0$ and
\begin{equation}
\int_{U} | \mathbb{V}_{\vec{\psi}} \vec{\varphi}(x, \omega)|^2 dx d \omega \geq 1- \varepsilon,
\end{equation}
then $$ |U| \geq \left( \frac{2^{p+1}}{p} \right)^{- \frac{2}{p-2}} (1- \varepsilon)^{ \frac{p}{p-2}} (n+1)^{\frac{2-3p}{p-2}}, \qquad \hbox{for} \quad p>2.$$
\end{thm}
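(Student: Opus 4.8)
The plan is to combine a Hölder interpolation over the set $U$ with the global $L^p$ bound of Proposition \ref{Lieb2}, exactly in the spirit of Lieb's original argument. First I would apply Hölder's inequality on $U$ to the pair $|\mathbb{V}_{\vec{\psi}}\vec{\varphi}|^2$ and the indicator $\mathbf{1}_U$, using the conjugate exponents $\frac{p}{2}$ and $\frac{p}{p-2}$ (these are conjugate since $\frac{2}{p}+\frac{p-2}{p}=1$, and both are admissible precisely because $p>2$). This yields
\begin{equation*}
\int_U |\mathbb{V}_{\vec{\psi}}\vec{\varphi}(x,\omega)|^2\,dx\,d\omega \leq \left(\int_{\mathbb{R}^2}|\mathbb{V}_{\vec{\psi}}\vec{\varphi}(x,\omega)|^p\,dx\,d\omega\right)^{\frac{2}{p}}|U|^{\frac{p-2}{p}}.
\end{equation*}

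Next I would feed in the two external facts. On the left, the hypothesis of the theorem replaces the integral over $U$ by the lower bound $1-\varepsilon$. On the right, Proposition \ref{Lieb2}, i.e.\ inequality \eqref{Lieb1}, bounds the global $L^p$ integral by $\frac{2^{p+1}}{p}(n+1)^{p-1}\|\vec{\varphi}\|_{L^2(\mathbb{R},\mathbb{H}^{n+1})}^p$. Here the normalization hypothesis $\|\varphi_j\|_{L^2(\mathbb{R},\mathbb{H})}=1$ enters through the definition \eqref{nnorm} of the vectorial norm, giving $\|\vec{\varphi}\|_{L^2(\mathbb{R},\mathbb{H}^{n+1})}^2=\sum_{j=0}^n 1=n+1$, hence $\|\vec{\varphi}\|_{L^2(\mathbb{R},\mathbb{H}^{n+1})}^p=(n+1)^{p/2}$. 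Collecting the two powers of $(n+1)$ produces the exponent $p-1+\frac{p}{2}=\frac{3p-2}{2}$.

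Putting these together I expect the single inequality
\begin{equation*}
1-\varepsilon \leq \left(\frac{2^{p+1}}{p}\right)^{\frac{2}{p}}(n+1)^{\frac{3p-2}{p}}\,|U|^{\frac{p-2}{p}},
\end{equation*}
and the final step is simply to isolate $|U|$. Raising both sides to the power $\frac{p}{p-2}$ and carrying the exponents through converts the power $\frac{3p-2}{p}$ on $(n+1)$ into $-\frac{3p-2}{p}\cdot\frac{p}{p-2}=\frac{2-3p}{p-2}$, and gives the claimed powers $(1-\varepsilon)^{\frac{p}{p-2}}$ and $\left(\frac{2^{p+1}}{p}\right)^{-\frac{2}{p-2}}$, which is exactly the asserted bound.

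I do not anticipate any genuine obstacle: the argument is a direct transcription of the scalar (true-poly) case and of the complex/Gaussian model, and everything substantive—the $L^p$ estimate \eqref{Lieb1}, the isometry of Theorem \ref{iso5} behind it, and the pointwise reproducing-kernel bound $|B^{n+1}\varphi(q)|\leq\sqrt{2}\,e^{\pi|q|^2}\|\varphi\|_{L^2(\mathbb{R},\mathbb{H})}$ of Proposition \ref{Kamal} that ultimately drives it—has already been established. The only point demanding care is the exponent bookkeeping in the last rearrangement, in particular checking that the condition $p>2$ is exactly what makes $\frac{p}{2}$ and $\frac{p}{p-2}$ a valid conjugate pair and keeps every power of $|U|$ and $(n+1)$ with the correct sign.
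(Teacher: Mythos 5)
Your argument is correct and is essentially identical to the paper's proof: Hölder with exponents $\frac{p}{2}$ and $\frac{p}{p-2}$ applied to $|\mathbb{V}_{\vec{\psi}}\vec{\varphi}|^2\chi_U$, followed by the $L^p$ bound \eqref{Lieb1} with $\|\vec{\varphi}\|_{L^2(\mathbb{R},\mathbb{H}^{n+1})}^p=(n+1)^{p/2}$, and the same exponent bookkeeping yielding $(n+1)^{\frac{3p-2}{p}}$ before isolating $|U|$. Your explicit tracking of where the normalization enters is if anything slightly more detailed than the paper's write-up.
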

\begin{proof}
The Hölder's inequality and the estimate \eqref{Lieb1} imply that
\begin{eqnarray*}
1- \varepsilon \! \! \! \! & \leq & \! \! \! \! \int_{U} | \mathbb{V}_{\vec{\psi}} \vec{\varphi}(x, \omega)|^2 dx d \omega = \int_{\mathbb{R}^2} | \mathbb{V}_{\vec{\psi}} \vec{\varphi}(x, \omega)|^2 \chi_U(x, \omega) dx d \omega \\
& \leq & \! \! \! \! \left( \int_{\mathbb{R}^2} | \mathbb{V}_{\vec{\psi}} \vec{\varphi}(x, \omega)|^p dx d \omega \right)^{\frac{2}{p}} |U|^{\frac{p-2}{p}}\\
& \leq & \! \! \! \! \left( \frac{2^{p+1}}{p} \right)^{\frac{2}{p}} (n+1)^{\frac{3p-2}{p}}|U|^{\frac{p-2}{p}}.
\end{eqnarray*}
Therefore
$$ |U| \geq \left( \frac{2^{p+1}}{p} \right)^{- \frac{2}{p-2}} (1- \varepsilon)^{ \frac{p}{p-2}} (n+1)^{\frac{2-3p}{p-2}}, \qquad \hbox{for} \quad p>2.$$
\end{proof}
\begin{thm}
\label{Wun4}
Let $ \varphi$ be in $L^2( \mathbb{R}, \mathbb{H})$. If we assume that $U$ is an open set of $ \mathbb{R}^2$, $ \varepsilon \geq 0$ and $\| \varphi \|_{L^2(\mathbb{R}, \mathbb{H})}=1$ such that
$$ \int_U | \mathcal{V}_{\psi_n} \varphi (x, \omega)|^2 dx d \omega \geq 1- \varepsilon.$$
Then, we have
$$ |U| \geq \left( \frac{2^{p+1}}{p} \right)^{- \frac{2}{p-2}} (1- \varepsilon)^{ \frac{p}{p-2}}, \qquad \hbox{for} \quad p>2.$$
\end{thm}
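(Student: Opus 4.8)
The plan is to mirror the proof of Theorem \ref{Wun3}, specializing to the scalar (single-window) situation where the combinatorial factors $(n+1)$ no longer appear. The essential input is the $L^p$-estimate \eqref{Lieb} for the true-poly QSTFT, which I combine with H\"older's inequality restricted to the set $U$.

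First I would rewrite the hypothesis as an integral over all of $\mathbb{R}^2$ against the characteristic function $\chi_U$, namely
$$1-\varepsilon \leq \int_U |\mathcal{V}_{\psi_n}\varphi(x,\omega)|^2\,dx\,d\omega = \int_{\mathbb{R}^2} |\mathcal{V}_{\psi_n}\varphi(x,\omega)|^2\,\chi_U(x,\omega)\,dx\,d\omega.$$
Next I apply H\"older's inequality with the conjugate exponents $p/2$ and $p/(p-2)$, which are admissible precisely because $p>2$, pairing $|\mathcal{V}_{\psi_n}\varphi|^2$ against $\chi_U$. Since $\chi_U^{p/(p-2)}=\chi_U$, the second factor contributes exactly $|U|^{(p-2)/p}$, so that
$$1-\varepsilon \leq \left(\int_{\mathbb{R}^2}|\mathcal{V}_{\psi_n}\varphi(x,\omega)|^p\,dx\,d\omega\right)^{\frac{2}{p}}|U|^{\frac{p-2}{p}}.$$

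Then I invoke the $L^p$-bound \eqref{Lieb} together with the normalization $\|\varphi\|_{L^2(\mathbb{R},\mathbb{H})}=1$ to control the first factor by $\left(\frac{2^{p+1}}{p}\right)^{2/p}$, giving
$$1-\varepsilon \leq \left(\frac{2^{p+1}}{p}\right)^{\frac{2}{p}}|U|^{\frac{p-2}{p}}.$$
Raising both sides to the power $p/(p-2)$ and rearranging then yields the claimed lower bound for $|U|$. I do not expect a genuine obstacle here: the argument is routine once \eqref{Lieb} is available, and the only point demanding care is the bookkeeping of the H\"older exponents and the final algebraic inversion, which must be carried out so as to preserve the direction of the inequality.
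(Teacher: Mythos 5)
Your argument is correct and is exactly what the paper intends: its proof of this theorem simply says it follows by the same technique as Theorem \ref{Wun3}, i.e.\ H\"older's inequality with exponents $p/2$ and $p/(p-2)$ applied to $|\mathcal{V}_{\psi_n}\varphi|^2\chi_U$, combined with the $L^p$-bound \eqref{Lieb} and the normalization $\|\varphi\|_{L^2(\mathbb{R},\mathbb{H})}=1$. Your exponent bookkeeping and the final rearrangement match the intended computation.
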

\begin{proof}
It follows by using similar techniques of Theorem \ref{Wun3}. 
\end{proof}
\begin{rem}
If $n=0$ in Theorem \ref{Wun3} and Theorem \ref{Wun4}, these results coincide with \cite[Thm. 5.15]{DMD}.
\end{rem}

\section{Appendices}
We add the following appendices because of the lack of references. We prove an orthogonality relation for the complex Hermite polynomials, with a general parameter $ \alpha>0$. Then, we show some basic properties of the Hermite polynomials, for a general parameter $ \nu>0$.
\subsection{Appendix A}
We consider the complex Hermite polynomials defined by 

$$H^{\alpha}_{m,p}(z,\overline{z})=(-1)^{m+p}e^{\alpha |z|^2}\dfrac{\partial^{m+p}}{\partial z^m \partial \overline{z}^p}\left( e^{-\alpha|z|^2}\right), \qquad \alpha >0.$$
First, using some direct calculations we observe that we have 

$$H_{0,p}^{\alpha}(z,\overline{z})=\alpha^p z^p$$
and $$H_{1,p}^{\alpha}(z,\overline{z})=\alpha^{p+1}\overline{z}z^p-\alpha^p pz^{p-1}.$$ 
In order to revise the calculations of the complex Hermite polynomials norm in \\$L^{2,\alpha}(\mathbb{C}):=L^2(\mathbb{C},e^{-\alpha |z|^2}dA(z))$ we follow the ideas of \cite{II}.
For this, let us consider the operator given by 
$$A:=-\dfrac{\partial}{\partial z}+\alpha \overline{z}.$$
Then, we can prove the following result.
\begin{lem}
For all $m\in\mathbb{N}$, we have 
$$A^m\left((\alpha z)^p\right)=H_{m,p}^{\alpha}(z,\overline{z}).$$
\end{lem}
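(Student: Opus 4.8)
The plan is to prove the identity by induction on $m$, with the base case $m=0$ being exactly the formula already recorded above, namely $H_{0,p}^{\alpha}(z,\overline{z})=\alpha^p z^p=(\alpha z)^p$, so that $A^0((\alpha z)^p)=H_{0,p}^{\alpha}$. For the inductive step the only fact needed is that the operator $A=-\partial_z+\alpha\overline{z}$ raises the first index of the complex Hermite polynomials by one, i.e. $A\bigl(H_{m,p}^{\alpha}\bigr)=H_{m+1,p}^{\alpha}$. Granting this, if $A^m((\alpha z)^p)=H_{m,p}^{\alpha}$ then $A^{m+1}((\alpha z)^p)=A\bigl(H_{m,p}^{\alpha}\bigr)=H_{m+1,p}^{\alpha}$, which closes the induction.

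The key computation is an intertwining (gauge-conjugation) identity. Since $\partial_z e^{-\alpha|z|^2}=-\alpha\overline{z}\,e^{-\alpha|z|^2}$, the product rule gives, for every smooth function $h$,
$$e^{\alpha|z|^2}\partial_z\bigl(e^{-\alpha|z|^2}h\bigr)=\partial_z h-\alpha\overline{z}\,h=-A h,$$
so that, as operators, $A=-\,e^{\alpha|z|^2}\partial_z\,e^{-\alpha|z|^2}$. Applying this with $h=H_{m,p}^{\alpha}$, the interior factor $e^{-\alpha|z|^2}e^{\alpha|z|^2}$ coming from the Rodrigues expression for $H_{m,p}^{\alpha}$ cancels, producing exactly one extra $\partial_z$ and the sign $(-1)$, which is precisely $A\bigl(H_{m,p}^{\alpha}\bigr)=H_{m+1,p}^{\alpha}$.

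Alternatively, and perhaps more transparently, I would iterate the intertwining identity directly: because the interior exponentials telescope at each step, one obtains $A^m=(-1)^m e^{\alpha|z|^2}\partial_z^m e^{-\alpha|z|^2}$ for all $m$. Evaluating on $(\alpha z)^p$ and using the base-case formula in the form $(\alpha z)^p=H_{0,p}^{\alpha}=(-1)^p e^{\alpha|z|^2}\partial_{\overline z}^p e^{-\alpha|z|^2}$, hence $e^{-\alpha|z|^2}(\alpha z)^p=(-1)^p\partial_{\overline z}^p e^{-\alpha|z|^2}$, gives
$$A^m\bigl((\alpha z)^p\bigr)=(-1)^{m+p}e^{\alpha|z|^2}\partial_z^m\partial_{\overline z}^p e^{-\alpha|z|^2}=H_{m,p}^{\alpha}.$$
There is no genuine obstacle here: the whole content is the observation that conjugating $\partial_z$ by the Gaussian weight reproduces $-A$, and the only point demanding attention is the bookkeeping of the signs and of the telescoping exponential factors, which is routine.
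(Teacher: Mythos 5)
Your proof is correct and follows essentially the same route as the paper: both arguments proceed by induction on $m$, with the inductive step being exactly the identity $A\bigl(H_{m,p}^{\alpha}\bigr)=H_{m+1,p}^{\alpha}$ obtained from the product rule applied to the Rodrigues formula (the paper writes out the cancellation of the $\alpha\overline{z}$ terms explicitly, which is precisely your conjugation identity $A=-e^{\alpha|z|^2}\partial_z e^{-\alpha|z|^2}$ in expanded form). Your telescoped variant $A^m=(-1)^m e^{\alpha|z|^2}\partial_z^m e^{-\alpha|z|^2}$ is a slightly tidier packaging of the same computation, not a genuinely different argument.
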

\begin{proof}
We use an induction process to prove this result. 
Firstly, for $m=1$ we have 
\[ \begin{split}
\displaystyle A\left((\alpha z)^p\right) & =-\alpha^p pz^{p-1}+\alpha^{p+1}\overline{z}z^p \\
&=H_{1,p}^{\alpha}(z,\overline{z}).
\end{split}
\]
Now, let us suppose that this relation holds for $m$ and  prove it for $m+1$. Indeed, we use the induction hypothesis combined with the Leibniz rule to get 
\[ \begin{split}
\displaystyle A^{m+1}\left((\alpha z)^p\right) & = A\left(H_{m,p}^{\alpha}(z,\overline{z}) \right)\\
&= -(-1)^{m+p}\left(\alpha\overline{z}e^{\alpha|z|^2}\dfrac{\partial^{m+p}}{\partial z^m\partial \overline{z}^p} e^{-\alpha|z|^2}+e^{\alpha|z|^2}\dfrac{\partial^{m+p+1}}{\partial z^{m+1}\partial \overline{z}^p}e^{-\alpha |z|^2}\right)\\
& +(-1)^{m+p}\alpha\overline{z}e^{\alpha|z|^2}\dfrac{\partial^{m+p}}{\partial z^m\partial \overline{z}^p} e^{-\alpha|z|^2}
\\
& =H_{m+1,p}^{\alpha}(z,\overline{z}).
\end{split}
\]
Thus, the result holds by induction, this ends the proof.
\end{proof}
\begin{thm}
\label{new2}
Let $\alpha>0$ and $m,p\in\mathbb{N}$. Then, we have 
$$||H_{m,p}^{\alpha}(z,\overline{z})||_{L^{2,\alpha}(\mathbb{C})}=\alpha^{p+m-1}\pi m!p! \,.$$
\end{thm}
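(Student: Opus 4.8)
The plan is to exploit the factorization established in the preceding lemma, $H_{m,p}^{\alpha}(z,\overline{z})=A^m\big((\alpha z)^p\big)$ with $A=-\partial_z+\alpha\overline{z}$, together with an adjointness relation for $A$ in the weighted space $L^{2,\alpha}(\mathbb{C})$. First I would show, by integration by parts (the Gaussian weight $e^{-\alpha|z|^2}$ annihilates all boundary terms at infinity, since everything in sight is a polynomial times this weight), that $A$ is the formal adjoint of $\partial_{\overline{z}}$, i.e.
$$\langle \partial_{\overline{z}}f,g\rangle_{L^{2,\alpha}(\mathbb{C})}=\langle f,Ag\rangle_{L^{2,\alpha}(\mathbb{C})}.$$
The verification rests on $\partial_{\overline{z}}\big(\overline{g}\,e^{-\alpha|z|^2}\big)=\big(\overline{\partial_z g}-\alpha z\,\overline{g}\big)e^{-\alpha|z|^2}$ and the identity $\overline{Ag}=-\overline{\partial_z g}+\alpha z\,\overline{g}$, which match after an integration by parts in $\overline{z}$. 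Consequently $A=(\partial_{\overline{z}})^{*}$ and hence $(A^m)^{*}=\partial_{\overline{z}}^{\,m}$.

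Next I would record the lowering relation $\partial_{\overline{z}}H_{m,p}^{\alpha}=\alpha\,m\,H_{m-1,p}^{\alpha}$. This follows from the commutator $[\partial_{\overline{z}},A]=\alpha\,\mathrm{Id}$ (the second-order parts commute and only $[\partial_{\overline{z}},\alpha\overline{z}]=\alpha$ survives), which by a one-line induction gives $\partial_{\overline{z}}A^m=A^m\partial_{\overline{z}}+m\alpha A^{m-1}$; applying this to $(\alpha z)^p$ and using $\partial_{\overline{z}}(\alpha z)^p=0$ yields the relation. Iterating it $m$ times and invoking $H_{0,p}^{\alpha}=\alpha^p z^p$ (computed in the appendix) gives $\partial_{\overline{z}}^{\,m}H_{m,p}^{\alpha}=m!\,\alpha^m H_{0,p}^{\alpha}=m!\,\alpha^{m+p}z^p$.

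Combining these two ingredients, I would compute
$$\|H_{m,p}^{\alpha}\|_{L^{2,\alpha}(\mathbb{C})}^{2}=\langle A^m(\alpha z)^p,H_{m,p}^{\alpha}\rangle=\langle(\alpha z)^p,\partial_{\overline{z}}^{\,m}H_{m,p}^{\alpha}\rangle=m!\,\alpha^{m+2p}\,\|z^p\|_{L^{2,\alpha}(\mathbb{C})}^{2},$$
where all $m$ copies of $A$ are moved across the inner product as $\partial_{\overline{z}}$. It then remains to evaluate the elementary Gaussian moment $\|z^p\|^2=\int_{\mathbb{C}}|z|^{2p}e^{-\alpha|z|^2}\,dA(z)$; passing to polar coordinates and substituting $u=\alpha r^2$ reduces it to a Gamma integral, giving $\pi\,p!\,\alpha^{-(p+1)}$. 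Substituting yields $\|H_{m,p}^{\alpha}\|^{2}=m!\,\alpha^{m+2p}\cdot\pi\,p!\,\alpha^{-(p+1)}=\alpha^{p+m-1}\pi\,m!\,p!$, as claimed (the stated identity being understood as the \emph{squared} norm, consistent with the base case $m=p=0$, where $\|1\|^2=\pi/\alpha$).

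The main obstacle is the adjointness step: one must justify the vanishing of the boundary contributions and track the conjugations in the sesquilinear inner product with care, since a single sign or conjugation slip there propagates through the entire argument. Once $A=(\partial_{\overline{z}})^{*}$ is secured, the remaining steps — the commutator-driven lowering relation and the Gaussian moment — are routine bookkeeping. I note, moreover, that the same two ingredients immediately deliver the full orthogonality in \eqref{her3}: distinct values of $p$ are separated by the holomorphic factor $z^p$ in the final pairing, and distinct values of $m$ collapse to zero after one index is lowered below the other.
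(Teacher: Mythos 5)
Your proof is correct, but it takes a genuinely different route from the paper's. The paper proceeds by brute force: it expands $A^m\big((\alpha z)^p\big)$ into an explicit double sum, passes to polar coordinates, recognizes the radial part as a generalized Laguerre polynomial $L_m^{p-m}(\alpha r^2)$, and then invokes the classical Laguerre orthogonality integral $\int_0^\infty L_k^{\gamma}(t)L_j^{\gamma}(t)t^{\gamma}e^{-t}\,dt=\frac{\Gamma(\gamma+k+1)}{k!}\delta_{k,j}$ from Gradshteyn--Ryzhik. You instead observe that $A=-\partial_z+\alpha\overline{z}$ is the formal adjoint of $\partial_{\overline{z}}$ in $L^{2,\alpha}(\mathbb{C})$ and that $[\partial_{\overline{z}},A]=\alpha$, so that all $m$ factors of $A$ can be transposed across the inner product and collapse $H_{m,p}^{\alpha}$ down to $m!\,\alpha^{m+p}z^p$; the whole computation then reduces to the elementary Gaussian moment $\|z^p\|^2=\pi p!\,\alpha^{-(p+1)}$. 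I checked the adjointness identity, the commutator-induction $\partial_{\overline{z}}A^m=A^m\partial_{\overline{z}}+m\alpha A^{m-1}$, and the final bookkeeping: all correct, and you rightly note that the stated identity is the \emph{squared} norm (the paper's statement omits the square, as its own proof confirms). What your approach buys is independence from special-function tables and a one-line extension to the full orthogonality relation (distinct $m$'s are killed by over-differentiation, distinct $p$'s by the angular integral), which is the form actually needed in \eqref{her3}; what the paper's approach buys is the explicit Laguerre representation of $H_{m,p}^{\alpha}$ in polar coordinates, which is of independent interest. The only point to be scrupulous about in your write-up is the inner-product convention (linear in the first slot, conjugation in the second) when verifying $\langle \partial_{\overline{z}}f,g\rangle=\langle f,Ag\rangle$, but your displayed identity for $\partial_{\overline{z}}\big(\overline{g}\,e^{-\alpha|z|^2}\big)$ handles this correctly.
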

\begin{proof}
We set $\varphi_p(z)=(\alpha z)^p$, then using direct computations we obtain  
\[ \begin{split}
\displaystyle H_{m,p}^{\alpha}(z,\overline{z}) & = A^m\left(\varphi_p(z) \right)\\
&=\left(-\dfrac{\partial}{\partial z}+\alpha \overline{z}\right)^{m}(\varphi_p(z)) \\
&=\alpha^p\sum_{j=0}^m (-1)^j{m \choose j}\left(\dfrac{\partial^j}{\partial z^j}M_{\overline{z}}^{m-j}(z^p)\right)\alpha^{m-j},
\end{split}
\]
where the conjugate multiplication operator is given by $M_{\overline{z}}f=\overline{z}f$. Then, using the fact that $$\displaystyle \left(\dfrac{\partial}{\partial z}\right)^j z^p=\frac{\Gamma(p+1)}{\Gamma(p-j+1)}z^{p-j},  \quad j=0,1,2,...$$
we obtain
\[ \begin{split}
\displaystyle H^{\alpha}_{m,p}(z,\overline{z}) & = \alpha^p m!\sum_{j=0}^m(-1)^j\frac{p!}{j!(m-j)!(p-j)!}\alpha^{m-j}z^{p-j}\overline{z}^{m-j}.
\end{split}
\]
Then, we pass to the polar coordinates $z=re^{i\theta}$ with $r\geq 0$ and $\theta\in[0,2\pi]$ and get $$\displaystyle H^{\alpha}_{m,p}(re^{i\theta},re^{-i\theta})=\alpha^p m!e^{i\theta(p-m)}\sum_{j=0}^{m}\dfrac{p!(-1)^j}{j!(m-j)!(p-j)!}\alpha^{m-j}r^{p+m-2j}.$$
We change the summation index to $k=m-j$, so we get
\begin{eqnarray}
\label{new4}
H_{m,p}^{\alpha}(re^{i\theta}, re^{-i\theta})&=& \alpha^p m!e^{i\theta(p-m)}\\  
&& \cdot \sum_{k=0}^{m}\dfrac{p!(-1)^{m-k}}{(m-k)!k!(p-m+k)!}\alpha^{k}r^{p-m+2k}.\nonumber
\end{eqnarray}

After that we use the classical formula for the generalized Laguerre polynomials given by 
$$L^{\beta}_{m}(x):=\displaystyle \sum_{k=0}^m(-1)^k{m+\beta \choose m-k} \frac{x^k}{k!}.$$
Thus, if $\beta:=p-m$, with $p>m$ we have 
$$L_{m}^{p-m}(x):=\displaystyle \sum_{k=0}^m(-1)^k \frac{p!}{(m-k)!(p-m+k)!}\frac{x^k}{k!}.$$
In particular, from the formula \eqref{new4} we get $$\displaystyle H_{m,p}^{\alpha}(re^{i\theta},re^{-i\theta})=\alpha^p m!(-1)^me^{i\theta(p-m)}r^{p-m}L^{p-m}_{m}(\alpha r^2).$$
Now, we compute the orthogonality relation using the Fubini's theorem. Let $m',p' \in\mathbb{N}$ we have 
\begin{eqnarray*}
&& \scal{H_{m,p}^{\alpha}(z, \bar{z}),H_{m',p'}^{\alpha}(z, \bar{z})}_{L^{2, \alpha}(\mathbb{C})}  =\int_{\mathbb{C}}H_{m,p}(z,\overline{z})\overline{H_{m',p'}(z,\overline{z})}e^{-\alpha|z|^2}dA(z)\\ 
&& =\alpha^{2p}(m!)(m'!) (-1)^m (-1)^{m'}\left(\int_{0}^{2\pi}e^{i\theta(p-m)}e^{-i\theta(p'-m')}d\theta\right)  \\
&& \cdot \left(\int_{0}^{\infty}r^{p-m} r^{p'-m'} r L^{p-m}_{m}(\alpha r^2) L^{p'-m'}_{m'}(\alpha r^2)e^{-\alpha r^2}dr \right).\\ 
\end{eqnarray*}

We set $\ell:=p-m$ and $\ell'=p'-m'$. Thus we get
\begin{eqnarray*}
&& \scal{H_{m,p}^{\alpha}(z, \bar{z}),H_{m',p'}^{\alpha}(z, \bar{z})}_{L^{2, \alpha}(\mathbb{C})}  = 2\pi \alpha^{2p}(m!)(m'!)  \cdot \\
&& \cdot (-1)^m (-1)^{m'} \delta_{\ell,\ell'}   \left(\int_{0}^{\infty}r^{p-m+1} r^{p'-m'}L^{p-m}_{m}(\alpha r^2) L^{p'-m'}_{m'}(\alpha r^2)e^{-\alpha r^2}dr \right).
\end{eqnarray*}
Since $\ell=p-m=p'-m'=\ell'$ we derive that $p=m+\ell$ and $p'=m'+\ell$. Therefore
\begin{eqnarray*}
&&\scal{H_{m,p}^{\alpha}(z, \bar{z}),H_{m',p'}^{\alpha}(z, \bar{z})}_{L^{2, \alpha}(\mathbb{C})}=
\scal{H_{m,m+\ell}^{\alpha}(z, \bar{z}),H_{m',m'+\ell}^{\alpha}(z, \bar{z})}_{L^{2, \alpha}(\mathbb{C})}\\
&& = 2\pi \alpha^{2p}(m!)(m'!)(-1)^m (-1)^{m'}  \left(\int_{0}^{\infty}r^{\ell+1} r^{\ell}L^{\ell}_{m}(\alpha r^2) L^{\ell}_{m'}(\alpha r^2)e^{-\alpha r^2}dr \right)\\
&& = 2\pi \alpha^{2p}(m!)(m'!) (-1)^m (-1)^{m'} \left(\int_{0}^{\infty}r^{2 \ell+1} L^{\ell}_{m}(\alpha r^2) L^{\ell}_{m'}(\alpha r^2)e^{-\alpha r^2}dr \right).\\
\end{eqnarray*}

We know that (see \cite{GR} pag. 809 paragraph 7.414 formula n° 3) $$\displaystyle\int_0^\infty L^{\gamma}_{k}(t)L^{\gamma}_{j}(t)t^\gamma e^{-t}dt=\dfrac{\Gamma(\gamma+k+1)}{k!}\delta_{k,j}.$$
	
Then, we use the following change of variables $s=\alpha r^2$ and get 
	
\[ \begin{split}
\displaystyle \scal{H_{m,p}^\alpha(z, \bar{z}),H_{m',p'}^\alpha(z, \bar{z})}_{L^{2, \alpha}(\mathbb{C})}  =&\pi \alpha^{2p-1}(m!)(m'!) (-1)^m (-1)^{m'} \cdot\\ 
& \cdot \int_0^\infty \left(\dfrac{s}{\alpha}\right)^{\ell} L^{\ell}_{m}(s) L^{\ell}_{m'}(s)e^{-s}ds\\ 
 = & \pi \alpha^{2p-1- \ell}(m!)^2\dfrac{\Gamma(m+\ell+1)}{m!} \delta_{m,m'}.
\end{split}
\]	
Since $ \ell=p-m$ and $p-m=p'-m'$ we get
\begin{eqnarray*}
\scal{H_{m,p}^{\alpha}(z, \bar{z}),H_{m',p'}^{\alpha}(z, \bar{z})}_{L^{2, \alpha}(\mathbb{C})} \! \! \! &=& \! \! \! \pi \alpha^{p+m-1} m! \Gamma(p+1) \delta_{p,p'} \delta_{m,m'}\\
&=& \! \! \!\pi \alpha^{p+m-1} m! p! \delta_{m,m'} \delta_{p,p'}.
\end{eqnarray*}

Therefore
$$ \| H_{m,p}^{\alpha} (z, \bar{z})\|_{L^{2, \alpha}(\mathbb{C})}^2=\pi \alpha^{p+m-1} m! p! \quad.$$
\end{proof}
\begin{rem}
\label{ort1}
In particular, for $\alpha=2\pi$ we obtain
$$ \displaystyle \scal{H_{m,p}^{2 \pi}(z, \bar{z}),H_{m',p'}^{2 \pi}(z, \bar{z})}_{L^{2, \alpha}(\mathbb{C})} =\frac{m!p!(2\pi)^{p+m}}{2}\delta_{m,m'} \delta_{p,p'}.$$
\end{rem}
\subsection{Appendix B}
Let us consider the following function for $ \nu>0$
$$\displaystyle W(x,t)=e^{-\frac{\nu}{2}t^2+\nu\sqrt{2}xt}=\sum_{n=0}^{\infty}\frac{H_n^\nu(x)}{n!}\frac{t^n}{2^{\frac{n}{2}}},$$
where $H_n^\nu$ are the weighted Hermite polynomials defined by
$$ H_n^\nu(x)=(-1)^n e^{\nu y^2} \left( \frac{d}{dy} \right)^n e^{- \nu y^2}= n! \sum_{m=0}^{\left[ \frac{n}{2} \right]} \frac{(-1)^n (2 x \nu)^{n-2m}}{m!(n-2m)!}.$$
Putting $t=\sqrt{2}\lambda$ we get 
\begin{equation}\label{w1}
\displaystyle W(x,\lambda)=e^{-\nu \lambda^2+2\nu x \lambda}=\sum_{n=0}^\infty \frac{H_n^\nu(x)}{n!}\lambda^{n}.
\end{equation}

Relabelling $\lambda$ with $t$ we call the function $W(x,t)$ in \eqref{w1} as the generating function of the weighted Hermite polynomials. In order to obtain a recurrence relation, which relates the weighted Hermite polynomials with their consecutive indices, we derive the equation \eqref{w1} with respect to $t$:
$$\displaystyle \frac{\partial W(x,t)}{\partial t}=(-2\nu t+2\nu x)e^{-\nu t^2+2\nu xt}=\sum_{n=1}^{\infty}nH_{n}^{\nu}(x)\frac{t^{n-1}}{n!}. $$

Using another time the generating function \eqref{w1} we obtain 

\begin{equation}
\label{jan1}
-2\nu \sum_{n=0}^{\infty}H_n^\nu(x)\frac{t^{n+1}}{n!}+2\nu x\sum_{n=0}^{\infty}H_n^\nu(x)\frac{t^n}{n!}=\sum_{n=1}^{\infty}H_n^\nu(x)\frac{t^{n-1}}{(n-1)!}.
\end{equation}
By a change of indices in the first sum we get 
$$
\displaystyle -2\nu\sum_{n=1}^{\infty}H^{\nu}_{n-1}(x)\frac{t^n}{(n-1)!}=-2\nu \sum_{n=1}^{\infty}nH_{n-1}^{\nu}(x)\frac{t^n}{n!}.
$$
Thus, by another change of indices in \eqref{jan1} we obtain
$$\displaystyle-2\nu\sum_{n=1}^{\infty}nH_{n-1}^{\nu}(x)\frac{t^n}{n!}+2\nu x\sum_{n=0}^{\infty}H_n^\nu(x)\frac{t^n}{n!}=\sum_{n=0}^{\infty}H_{n+1}^{\nu}(x)\frac{t^n}{n!}.$$

By identifying, the coefficients of $t^n$ we get 
\begin{equation}\label{A1}
H_{n+1}^{\nu}(x)=2\nu x H_n^\nu(x)-2n\nu H_{n-1}^{\nu}(x).
\end{equation}
It is possible to derive another recurrence relation satisfied by the weighted Hermite polynomials. We set $ t= \sqrt{2} \lambda$ as in \eqref{w1} and after we differentiate $W(x,t)$ with respect to $x$ 

$$\displaystyle \frac{\partial}{\partial x} W(x,t)=2\nu t e^{-\nu t^2+2\nu xt}=\sum_{n=0}^{\infty} \left( \frac{d}{dx}H_n^\nu(x) \right)\frac{t^n}{n!}.$$
Using the generating function we obtain

$$\displaystyle 2\nu\sum_{n=0}^{\infty}H_n^\nu(x) \frac{t^{n+1}}{n!}=\sum_{n=1}^{\infty}\left( \frac{d}{dx}H_n^\nu(x) \right)\frac{t^n}{n!}.$$

By a change of variables we can identify the coefficients of $t^n$ to get 
\begin{equation}\label{A2}
\frac{d}{dx}H_n^\nu(x) =2\nu nH_{n-1}^\nu(x).
\end{equation}
\begin{rem}
If we put $\nu=1$ in the formulas \eqref{A1} and \eqref{A2} we recover the classical formulas that can be found in \cite{L}.
\end{rem}

\hspace{4mm}

\noindent
Antonino De Martino,
Dipartimento di Matematica \\ Politecnico di Milano\\
Via Bonardi n.~9\\
20133 Milano\\
Italy

\noindent
\emph{email address}: antonino.demartino@polimi.it\\
\emph{ORCID iD}: 0000-0002-8939-4389

\vspace*{5mm}
\noindent
Kamal Diki,
Dipartimento di Matematica \\ Politecnico di Milano\\
Via Bonardi n.~9\\
20133 Milano\\
Italy

\noindent
\emph{email address}: kamal.diki@polimi.it\\
\emph{ORCID iD}: 0000-0002-4359-7535

\end{document}